\def\subsection{\@startsection{subsection}{2}%
  \z@{.5\linespacing\@plus.7\linespacing}{.5\linespacing}%
  {\normalfont\bfseries}}
\def\@defaultbiblabelstyle#1{[#1]}
\newtheorem{theorem}{Theorem}[section]
\newtheorem{lemma}[theorem]{Lemma}
\newtheorem{proposition}[theorem]{Proposition}
\newtheorem{corollary}[theorem]{Corollary}
\theoremstyle{definition}
\newtheorem{definition}[theorem]{Definition}
\newtheorem{example}[theorem]{Example}
\theoremstyle{remark}
\newtheorem{remark}[theorem]{Remark}
\numberwithin{equation}{section}
\newcommand\fe{\mathfrak{e}}
\newcommand\Id{\mathrm{Id}}
\newcommand\II{\mathds{1}}
\newcommand\Vg{V_{\text{geom}}}
\newcommand\abs[1]{\lvert #1 \rvert}
\newcommand\dle {\underset{\mathrm{g}}{\leqslant}}
\newcommand\dsim{\underset{\mathrm{g}}{\sim}}
\newcommand\papp{\underset{\mathrm{V}}{\approx}}
\newcommand\psim{\underset{\mathrm{V}}{\sim}}
\DeclareMathOperator{\rank}{rank}
\begin{document}

\title[Reflection representations and homology]{Reflection representations of Coxeter groups and homology of Coxeter graphs}

\author{Hongsheng Hu}
\address{Beijing International Center for Mathematical Research, Peking University, No.\ 5 Yiheyuan Road, Haidian District, Beijing 100871, China}
\email{huhongsheng(at)amss(dot)ac(dot)cn}
\urladdr{\href{https://huhongsheng.github.io/}{https://huhongsheng.github.io}}


\subjclass[2020]{Primary 20C15; Secondary 20F55}

\keywords{Coxeter group, generalized geometric representations, reflection representations, homology of graphs}

\date{November 23, 2023}


\begin{abstract}
We study and classify a class of representations (called generalized geometric representations) of a Coxeter group of finite rank. These representations can be viewed as a natural generalization of the geometric representation. The classification is achieved by using characters of the integral homology group of certain graphs closely related to the Coxeter graph.
On this basis, we also provide an explicit description of those representations on which the defining generators of the Coxeter group act by reflections.
\end{abstract}

\maketitle

\setcounter{tocdepth}{2}
\tableofcontents

\section{Introduction} \label{sec-intro}

Since the seminal paper \cite{KL79} of Kazhdan and Lusztig appeared in 1979, Coxeter groups and their representations have received much more attention than before and huge progresses have been made. But there are still a lot of problems to be investigated.
Compared with the well-established representation theory of (affine) Weyl groups, which are important examples of Coxeter groups, representations for general Coxeter groups are far from being understood. 

Given that the concept of Coxeter groups originates from the pioneering work \cite{Coxeter34, Coxeter35} by H. S. M. Coxeter on Euclidean reflection groups, it is natural to consider representations on which the Coxeter group ``acts like a reflection group''. 
In this paper we are mainly concerned with a class of such representations, called generalized geometric representations, of a Coxeter group of finite rank. 
We classify these representations (see Theorem \ref{thm-IR} and Theorem \ref{thm-IR-gen}) by using characters of the integral homology group of certain graphs closely related to the Coxeter graph.
Moreover, the characters also tell us which generalized geometric representations admit a nonzero bilinear form which is invariant under the group action (see Theorem \ref{prop-bil}).

It is not hard to find the composition factors of a generalized geometric representation (see Subsection \ref{subsec-red} and Subsection \ref{subsec-red-gen}). 
In this way we are able to get a class of irreducible representations of a Coxeter group of finite rank.
These irreducible representations are attached to the second-highest two sided cell(s) in the sense of Kazhdan and Lusztig \cite{KL79, Lusztig83-intrep} (see Remark \ref{rmk-a=1} and \cite{Hu23} for more details).

In the closing section of this paper, the generalized geometric representations and their quotients serve as a foundation for the investigation of the so-called ``reflection representations''.

Below we are presenting the definition of the main subject of our study.

Let $W = \langle s \in S \mid (st)^{m_{st}} = e, \forall s, t \in S \rangle$ be a Coxeter group with finite generator set $S$ and Coxeter matrix $(m_{st})_{s,t \in S}$.
That is, 
\[\text{$m_{ss} =1$, $\forall s\in S$; $m_{st} = m_{ts} \in \mathbb{N}_{\ge 2} \cup  \{\infty\}$, $\forall s,t \in S$ with $s \ne t$.}\]
For convenience, we work over the complex number field $\mathbb{C}$.

\begin{definition} \label{def-refl-rep} \leavevmode
  \begin{enumerate}
    \item Suppose $V$ is a vector space, and $s: V \to V$ is a linear map. If there exists a nonzero vector $\alpha_s \in V$ and a subspace $H_s \subseteq V$ of codimension one such that $s|_{H_s} = \Id_{H_s}$ (the identity map of $H_s$) and $s \cdot \alpha_s = - \alpha_s$, then $s$ is called a \emph{reflection} on $V$.
        In particular, $V = H_s \oplus \mathbb{C} \alpha_s$, and $\alpha_s$ is unique up to a scalar.
        We call $\alpha_s$ a \emph{reflection vector} of $s$, and $H_s$ the \emph{reflection hyperplane} of $s$.
    \item Let $V$ be a representation of $W$. If $s$ acts on $V$ by a reflection for any $s \in S$, then $V$ is called a \emph{reflection representation} of $W$.
    \item Let $V$ be a reflection representation of $W$. 
         If moreover the reflection vectors $\{\alpha_s \mid s \in S\}$ form a basis of $V$, then $V$ is called a \emph{generalized geometric representation} of $W$.
  \end{enumerate}
\end{definition}

Our notion of generalized geometric representations comes from the ``standard'' reflection representation $\Vg$ (called the geometric representation in the classic \cite{Bourbaki-Lie456}).
As a vector space, $\Vg := \bigoplus_{s \in S} \mathbb{C} \alpha_s$ has a basis $\{\alpha_s \mid s\in S\}$, with a bilinear form $(- | -)$ given by
\begin{equation}   \label{eq-B-Vg}
  (\alpha_s|\alpha_t) := - \cos \frac{\uppi}{m_{st}}, \quad \forall s,t \in S.
\end{equation}
Here we regard $\cos \frac{\uppi}{\infty} = 1$.
The action of $W$ is defined by
\begin{equation}   \label{eq-Vg-action}
  s \cdot \alpha_t := \alpha_t - \frac{2 (\alpha_t|\alpha_s)}{(\alpha_s| \alpha_s)} \alpha_s = \alpha_t + 2 \cos \frac{\uppi}{m_{st}} \alpha_s, \quad \forall s,t \in S.
\end{equation}
In other words, the action of $s$ is the orthogonal reflection with respect to the bilinear form $(-|-)$ with reflection vector $\alpha_s$. 
In particular, the bilinear form $(- | -)$ is $W$-invariant in the following sense,
\begin{equation}   \label{eq-Vg-W-inv}
  (w \cdot x | w \cdot y) = (x|y), \quad \forall w\in W, \quad \forall x,y \in \Vg.
\end{equation}
As observed in the literature, the representation $\Vg$ captures many significant properties of Coxeter groups.
See \cite{Bourbaki-Lie456, Humphreys90} for more details.

Some generalizations of the geometric representation have been explored in previous research.
See, for example, \cite{BB05, BCNS15, Donnelly11, Hee91, Krammer09, Vinberg71} and references therein.
These works emphasize various combinatorial aspects including root systems, the numbers game, the fundamental domain for the group action, etc.
These topics occupy a prominent position within the research of Coxeter groups.

\subsection{Outline of the paper}

This paper is organized as follows. 
In Section \ref{sec-intro}, we give our definition of generalized geometric representations and reflection representations. 
In Section \ref{sec-pre}, we recollect some preliminary knowledge about dihedral groups and graphs.

For simplicity, we assume that $m_{rt} < \infty$ for any $r,t \in S$ in Section \ref{sec-IR} and Section \ref{sec-property}.
In Section \ref{sec-IR}, we examine the structure of generalized geometric representations and provide a classification of their isomorphism classes. 
Under our assumption, a generalized geometric representation of $W$ can be uniquely determined by a datum $((k_{rt})_{r,t \in S, r \ne t}, \chi)$.
Here $(k_{rt})_{r,t \in S, r \ne t}$ is a specific set of natural numbers, and $\chi$ is a character (i.e., a one-dimensional representation) of the first integral homology group $H_1(\widetilde{G})$ of a certain graph $\widetilde{G}$.
Our approach is sketched as follows.
Consider arbitrary two elements $r,t \in S$ generating a dihedral subgroup of $W$.
The corresponding reflection vectors $\alpha_r, \alpha_t$ span a subrepresentation of this dihedral subgroup, which corresponds to a number $k_{rt}$ with $1 \le k_{rt} \le \frac{m_{rt}}{2}$.
We then analyze how these subrepresentations of various dihedral subgroups ``glue'' together to form the whole generalized geometric representation.
It turns out that the information of ``gluing'' is encoded in $\chi$.

Section \ref{sec-property} consists of four topics concerning some properties of the generalized geometric representations.
Our first consideration is their reducibility.
In this respect, generalized geometric representations behave like the geometric representation $\Vg$ (see Subsection \ref{subsec-red}).
After that, we investigate a larger class of reflection representations, called R-representations, which are closely related to the generalized geometric representations (see Subsection \ref{subsec-R}).
Next, Subsection \ref{subsec-bil} gives an answer to the interesting question of which generalized geometric representations admit nonzero $W$-invariant bilinear forms.
Finally, in Subsection \ref{subsec-dual} we examine the structure of the dual of an arbitrary generalized geometric representation, which is also a reflection representation.

In Section \ref{sec-gen}, we drop the assumption $m_{rt} < \infty$ and present relevant results.

In Section \ref{sec-refl-rep}, we consider arbitrary reflection representations without any linearly independence condition on the reflection vectors.
Roughly speaking, such a representation factors through an R-representation studied in Subsection \ref{subsec-R} of a quotient of the Coxeter group $W$.
At this point, we obtain an explicit description of any reflection representation of $W$.

\subsection*{Acknowledgements}

The author is deeply grateful to Professor Nanhua Xi for his patient guidance and insightful discussions.
The author would also like to thank Tao Gui for valuable exchanges.

\section{Preliminaries} \label{sec-pre}

In the classification of the generalized geometric representations, we will use some graphs and their homology groups to investigate the way of ``gluing'' representations of some dihedral groups.
For convenience, we recall in this section some preliminary knowledge about the representations of dihedral groups and integral homology of graphs.

The base field in this article is assumed to be $\mathbb{C}$ unless otherwise specified.
We use $e$ to denote the unity of a group.
For a representation $\rho: W \to GL(V)$ of a group $W$ and for elements $w \in W$, $v \in V$, we write $w \cdot v$ to indicate the vector $\rho(w)\cdot v$ if there is no ambiguity.
We simply say that $V$ is a representation of $W$ if the action $\rho$ is clear in context.

\subsection{Representations of finite dihedral groups} \label{subsec-f-dih}
Let $D_m := \langle r,t \mid r^2 = t^2 = (rt)^m = e \rangle$ be a finite dihedral group ($m \in \mathbb{N}_{\ge 2}$), and $V_{r,t} := \mathbb{C}\beta_r \oplus \mathbb{C}\beta_t$ be a vector space with formal basis $\{\beta_r, \beta_t\}$.
For natural numbers $k$ satisfying $1 \leq k \leq \frac{m}{2}$, we define the action $\rho_k : D_m \to GL(V_{r,t})$ by
\begin{equation}   \label{eq-Dm-rho-k}
  \begin{aligned}
    r \cdot \beta_r & := - \beta_r, \quad  & r \cdot \beta_t & := \beta_t + 2\cos\frac{k\uppi}{m} \beta_r,\\
    t\cdot \beta_t & := - \beta_t,  & t \cdot \beta_r & := \beta_r + 2\cos\frac{k\uppi}{m} \beta_t.
  \end{aligned}
\end{equation}
Intuitively, $r,t$ acts on the (real) plane by two reflections with respect to two lines with an angle of $\frac{k \uppi}{m}$, see Figure \ref{rho-k}.

\begin{figure}[ht]
    \centering
    \begin{tikzpicture}
      \draw[dashed] (-1.3,0)--(2,0);
      \draw[dashed] (240:1.3)--(60:2);
      \draw[->] (0,0)--(0,1.5);
      \draw[->] (0,0)--(330:1.5);
      \draw (0.3,0) arc (0:60:0.3);
      \draw[<->] ($(1.9,0) + (300:0.2)$) arc (300:420:0.2);
      \draw[<->] ($(60:1.9) + (0.2,0)$) arc (0:120:0.2);
      \node[left] (ar) at (0,1.5) {$\beta_r$};
      \node[below] (at) at (330:1.5) {$\beta_t$};
      \node[right] (r) at (2.1,0) {$r$};
      \node[right] (t) at (63:2.3) {$t$};
      \node[right] (angle) at (0.2,0.3) {$\frac{k\uppi}{m}$};
    \end{tikzpicture}
    \caption{$\rho_k : D_m \to GL(V_{r,t})$}\label{rho-k}
\end{figure}

If $k < \frac{m}{2}$, then $\rho_k$ is irreducible.
If $m$ is even and $k = \frac{m}{2}$, then $\rho_\frac{m}{2} \simeq \varepsilon_r \oplus \varepsilon_t$ splits into a direct sum of two representations of dimension $1$, where
\begin{equation}  \label{eq-epsilon-rt}
    \varepsilon_r: r \mapsto -1, t \mapsto 1; \quad \quad
    \varepsilon_t: r \mapsto 1, t \mapsto -1.
\end{equation}
We denote by $\II$ the trivial representation, and by $\varepsilon$ the sign representation, i.e.,
\begin{equation} \label{eq-epsilon}
  \II: r,t \mapsto 1; \quad \quad \varepsilon: r,t \mapsto -1.
\end{equation}

\begin{lemma}[{\cite[\S 5.3]{Serre77}}] \label{lem-Dm-rep} 
The following are all the irreducible representations of $D_m$,
\begin{gather*}
    \{\II, \varepsilon\} \cup \{\rho_1, \dots, \rho_{\frac{m-1}{2}}\}, \quad \text{if $m$ is odd}; \\
    \{\II, \varepsilon, \varepsilon_r, \varepsilon_t\} \cup \{\rho_1, \dots, \rho_{\frac{m}{2}-1}\}, \quad \text{if $m$ is even}.
\end{gather*}
These representations are non-isomorphic to each other.
\end{lemma}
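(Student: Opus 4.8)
The plan is to follow the standard character-theoretic route, handling the two parities of $m$ in parallel: first exhibit all the listed representations and verify they are genuine pairwise non-isomorphic irreducibles, then invoke a dimension-counting argument to confirm that the list is exhaustive. Since the whole group is generated by $r$ and $t$, it suffices throughout to track the action of these two generators together with the rotation $rt$.

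I would begin with the one-dimensional representations, which factor through the abelianization $D_m^{\mathrm{ab}}$. As the target $\mathbb{C}^\times$ is abelian, the relation $(rt)^m = e$ collapses to $r^m t^m = 1$. When $m$ is odd this forces the images of $r$ and $t$ to coincide, so $D_m^{\mathrm{ab}} \cong \mathbb{Z}/2\mathbb{Z}$ and the only one-dimensional representations are $\II$ and $\varepsilon$ of \eqref{eq-epsilon}; when $m$ is even the relation becomes vacuous, $D_m^{\mathrm{ab}} \cong \mathbb{Z}/2\mathbb{Z} \times \mathbb{Z}/2\mathbb{Z}$, and one obtains exactly the four characters $\II, \varepsilon, \varepsilon_r, \varepsilon_t$ from \eqref{eq-epsilon-rt} and \eqref{eq-epsilon}.

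Next I would treat the $\rho_k$. A direct computation of the matrix of $rt$ in the basis $\{\beta_r, \beta_t\}$ from \eqref{eq-Dm-rho-k} yields determinant $1$ and trace $-2 + 4\cos^2\frac{k\uppi}{m} = 2\cos\frac{2k\uppi}{m}$, so $rt$ acts with eigenvalues $e^{\pm 2k\uppi i/m}$; in particular $(rt)^m$ acts trivially, so each $\rho_k$ is a well-defined representation. For $1 \le k < \frac{m}{2}$ these two eigenvalues are distinct, hence the only lines invariant under $\langle rt \rangle$ are the two eigenlines of $rt$; since conjugation by $r$ inverts $rt$ and therefore interchanges these eigenlines, no line is stable under all of $D_m$, and $\rho_k$ is irreducible. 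When $m$ is even and $k = \frac{m}{2}$ we have $\cos\frac{k\uppi}{m} = 0$, the action becomes diagonal in $\{\beta_r,\beta_t\}$, and one reads off $\rho_{m/2} \cong \varepsilon_r \oplus \varepsilon_t$. Evaluating the character of $\rho_k$ at $rt$ gives $2\cos\frac{2k\uppi}{m}$, and as $\frac{2k\uppi}{m}$ takes distinct values in $(0,\uppi)$ for $1 \le k < \frac{m}{2}$, the $\rho_k$ are pairwise non-isomorphic; they differ from the one-dimensional representations in dimension, and the one-dimensional ones are separated by their values on $r$ and $t$. Completeness then follows from $\sum_{\text{irred}} (\dim)^2 = \abs{D_m} = 2m$: in the odd case $2\cdot 1 + \frac{m-1}{2}\cdot 4 = 2m$, and in the even case $4\cdot 1 + (\frac{m}{2}-1)\cdot 4 = 2m$, leaving no room for further irreducibles.

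I expect the only genuine bookkeeping obstacle to be the parity split, namely matching the conjugacy-class structure of the reflections (a single class when $m$ is odd, two classes when $m$ is even) against the correct number of one-dimensional characters, and handling the boundary value $k = \frac{m}{2}$ cleanly as the degeneration where $\rho_{m/2}$ decomposes. Everything else reduces to the $2\times 2$ linear algebra of $rt$ and the orthogonality/counting machinery of ordinary character theory.
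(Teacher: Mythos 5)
Your proof is correct, and since the paper offers no proof of its own here (it simply cites \cite[\S 5.3]{Serre77}), your argument is essentially the standard one found in that reference: exhibit the one-dimensional characters via the abelianization, verify irreducibility and pairwise distinctness of the $\rho_k$ through the eigenvalues $e^{\pm 2k\uppi i/m}$ of $rt$, and confirm completeness by the count $\sum (\dim)^2 = 2m$. All your computations (the trace $2\cos\frac{2k\uppi}{m}$, the swap of eigenlines under conjugation by $r$, the degeneration $\rho_{m/2} \simeq \varepsilon_r \oplus \varepsilon_t$, and both parity counts) check out.
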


The following lemmas are easy to verify.

\begin{lemma} \label{lem-2.4} \leavevmode
  \begin{enumerate}
    \item \label{lem-2.4-1} The $+1$-eigenspace of either $r$ or $t$ in the representation space $V_{r,t}$ of $\rho_k$ $(1 \le k \le \frac{m}{2})$ is one dimensional.
    \item \label{lem-2.4-2} No nonzero vector in $V_{r,t}$ can be fixed by $r$ and $t$ simultaneously.
  \end{enumerate}
\end{lemma}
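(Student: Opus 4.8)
The plan is to argue directly with explicit matrices, since $V_{r,t}$ is only two-dimensional and the action is given concretely in \eqref{eq-Dm-rho-k}. For part \eqref{lem-2.4-1}, I would write out the matrix of $\rho_k(r)$ in the basis $\{\beta_r, \beta_t\}$, namely
\[
\rho_k(r) = \begin{pmatrix} -1 & 2\cos\frac{k\uppi}{m} \\ 0 & 1 \end{pmatrix},
\]
which is upper-triangular with diagonal (hence eigenvalues) $-1$ and $+1$. As these are distinct, $\rho_k(r)$ is diagonalizable and each eigenspace is exactly one-dimensional; in particular the $+1$-eigenspace is the line spanned by $\cos\frac{k\uppi}{m}\,\beta_r + \beta_t$. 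Interchanging the roles of $r$ and $t$ (equivalently $\beta_r$ and $\beta_t$) gives the same conclusion for $t$, whose $+1$-eigenspace is spanned by $\beta_r + \cos\frac{k\uppi}{m}\,\beta_t$.

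For part \eqref{lem-2.4-2}, the key observation is that a vector fixed by both $r$ and $t$ must lie in the intersection of the two $+1$-eigenspaces just computed. I would therefore check that these two lines are distinct: the spanning vectors $\cos\frac{k\uppi}{m}\,\beta_r + \beta_t$ and $\beta_r + \cos\frac{k\uppi}{m}\,\beta_t$ are proportional precisely when $\cos^2\frac{k\uppi}{m} = 1$, i.e. when $\sin\frac{k\uppi}{m} = 0$. Since $1 \le k \le \frac{m}{2}$ forces the angle $\frac{k\uppi}{m}$ into the interval $(0, \frac{\uppi}{2}]$, we have $\sin\frac{k\uppi}{m} > 0$, so the two eigenlines are distinct. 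Being distinct one-dimensional subspaces of the two-dimensional space $V_{r,t}$, they intersect only in $0$, which is exactly the assertion.

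Honestly there is no substantive obstacle here; the only point deserving attention is the boundary case $k = \frac{m}{2}$ (when $m$ is even), where $\cos\frac{k\uppi}{m} = 0$ and one might worry the argument degenerates. It does not: even there $\sin\frac{k\uppi}{m} = 1 \neq 0$, so the eigenlines remain distinct and the proof goes through unchanged. As a sanity check one could instead deduce part \eqref{lem-2.4-2} representation-theoretically from Lemma \ref{lem-Dm-rep}: a nonzero vector fixed by both generators would span a copy of the trivial representation $\II$ inside $\rho_k$, but $\rho_k$ is either the two-dimensional irreducible $\rho_k$ (for $k < \frac{m}{2}$) or $\varepsilon_r \oplus \varepsilon_t$ (for $k = \frac{m}{2}$), neither of which contains $\II$. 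I would keep the elementary matrix computation as the primary argument, since it is self-contained and makes both statements transparent.
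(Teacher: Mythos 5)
Your proof is correct, and it is essentially the argument the paper intends: the paper gives no proof of this lemma at all (it is introduced with ``The following lemmas are easy to verify''), and your computation---reading off the eigenvalues $\pm 1$ from the triangular matrix of $\rho_k(r)$, identifying the $+1$-eigenlines $\mathbb{C}\bigl(\cos\frac{k\uppi}{m}\,\beta_r + \beta_t\bigr)$ and $\mathbb{C}\bigl(\beta_r + \cos\frac{k\uppi}{m}\,\beta_t\bigr)$, and noting they coincide only when $\cos^2\frac{k\uppi}{m} = 1$, which $1 \le k \le \frac{m}{2}$ excludes---is precisely the routine verification being left to the reader, with the boundary case $k = \frac{m}{2}$ handled correctly. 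Your alternative deduction of part (2) from Lemma \ref{lem-Dm-rep} (no trivial subrepresentation in $\rho_k$ or in $\varepsilon_r \oplus \varepsilon_t$) is also valid.
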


\begin{lemma}[Schur's lemma]\label{lem-schur} 
For $k < \frac{m}{2}$, an endomorphism of $\rho_k$ must be a $\mathbb{C}$-scalar.
\end{lemma}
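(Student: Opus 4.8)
The plan is to deduce this from the irreducibility of $\rho_k$ together with the fact that $\mathbb{C}$ is algebraically closed, which is precisely the content of the classical Schur's lemma. First I would recall that, as recorded just before Lemma \ref{lem-Dm-rep}, the representation $\rho_k$ is irreducible exactly when $k < \frac{m}{2}$; this is the only place where the hypothesis $k < \frac{m}{2}$ enters. Granting irreducibility, let $\phi$ be an endomorphism of $\rho_k$, that is, a linear map $\phi : V_{r,t} \to V_{r,t}$ commuting with the actions of both $r$ and $t$. Since $V_{r,t}$ is a nonzero finite-dimensional $\mathbb{C}$-vector space, $\phi$ has an eigenvalue $\lambda \in \mathbb{C}$. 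Then $\phi - \lambda\,\Id$ is again an endomorphism of $\rho_k$, and its kernel is a nonzero $D_m$-subrepresentation of $V_{r,t}$; by irreducibility this kernel must be all of $V_{r,t}$, whence $\phi = \lambda\,\Id$ is a scalar.

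Alternatively, I could verify the claim by a direct computation that makes the role of the hypothesis transparent. Writing $c := 2\cos\frac{k\uppi}{m}$ and expressing $r$, $t$, and $\phi$ as $2 \times 2$ matrices in the basis $\{\beta_r, \beta_t\}$ via \eqref{eq-Dm-rho-k}, the commutation relations $\phi r = r \phi$ and $\phi t = t \phi$ become a small linear system in the four entries of $\phi$. Comparing entries forces the off-diagonal entries of $\phi$ to vanish and the two diagonal entries to coincide, provided $c \ne 0$; and indeed $k < \frac{m}{2}$ gives $0 < \frac{k\uppi}{m} < \frac{\uppi}{2}$, so $\cos\frac{k\uppi}{m} > 0$ and hence $c \ne 0$. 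This again yields $\phi = \lambda\,\Id$.

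There is no genuine obstacle here; the single point that requires care is simply to confirm that the hypothesis $k < \frac{m}{2}$ is what guarantees irreducibility (equivalently, the nonvanishing of $c$). At the boundary value $k = \frac{m}{2}$ the representation splits as $\varepsilon_r \oplus \varepsilon_t$ and thereby acquires non-scalar endomorphisms, so the strict inequality cannot be relaxed.
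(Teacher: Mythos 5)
Your proof is correct and is essentially the argument the paper intends: the paper groups Lemma \ref{lem-schur} under ``The following lemmas are easy to verify'' and gives no proof, and your first paragraph---the irreducibility of $\rho_k$ for $k < \frac{m}{2}$ recorded just before Lemma \ref{lem-Dm-rep}, combined with the classical eigenvalue-kernel Schur argument over the algebraically closed field $\mathbb{C}$---is the standard route that the lemma's very name points to. Your alternative direct matrix computation likewise matches the paper's own style, paralleling the proof of Lemma \ref{lem-2.7} and turning on the same observation made there, namely that $k < \frac{m}{2}$ forces $\cos\frac{k\uppi}{m} \ne 0$.
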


\begin{lemma}\label{lem-2.6}
If $d>1$ is a common divisor of $k$ and $m$ $(1 \le k \le \frac{m}{2})$, then $\rho_k$ factors through $D_{\frac{m}{d}}$ as follows:
\begin{equation*}
    \xymatrix{D_m \ar[rr]^{\rho_k} \ar@{->>}[rd]& & GL(V_{r,t}) \\
             & D_{\frac{m}{d}} \ar[ur]_{\rho_{\frac{k}{d}}} &}
\end{equation*}
Here $D_m \twoheadrightarrow D_{\frac{m}{d}}$ is the natural homomorphism.
\end{lemma}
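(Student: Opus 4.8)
The plan is to prove the sharper and cleaner statement that the two homomorphisms $\rho_{\frac{k}{d}} \circ p$ and $\rho_k$ from $D_m$ to $GL(V_{r,t})$ literally coincide, where $p \colon D_m \twoheadrightarrow D_{\frac{m}{d}}$ denotes the natural surjection. Since $\{r,t\}$ generates $D_m$, any homomorphism out of $D_m$ is pinned down by the images of $r$ and $t$, so it will suffice to compare these two maps on the two generators. Before doing so I would record that $\rho_{\frac{k}{d}}$ is genuinely one of the representations defined in \eqref{eq-Dm-rho-k}, that is, that $1 \le \frac{k}{d} \le \frac{m/d}{2}$: the upper bound is just $2k \le m$, which is the hypothesis, and the lower bound holds because $d \mid k$ with $k \ge 1$ forces $k$ to be a positive multiple of $d$, hence $\frac{k}{d} \ge 1$.

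The heart of the matter is the trivial identity $\cos \frac{(k/d)\uppi}{m/d} = \cos \frac{k\uppi}{m}$, valid because the argument is unchanged when the common factor $d$ is cancelled. Writing $\bar r = p(r)$ and $\bar t = p(t)$ for the standard generators of $D_{\frac{m}{d}}$, the defining formulas \eqref{eq-Dm-rho-k} for $\rho_{\frac{k}{d}}(\bar r)$ and $\rho_{\frac{k}{d}}(\bar t)$ then read verbatim the same as those for $\rho_k(r)$ and $\rho_k(t)$, since the single scalar $2\cos\frac{k\uppi}{m}$ entering the formulas is unchanged. Hence $\rho_{\frac{k}{d}} \circ p$ and $\rho_k$ agree on $r$ and $t$, and therefore on all of $D_m$, which is precisely the commutativity of the stated triangle.

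As a sanity check I would also verify the factorisation intrinsically, by showing that $\rho_k$ annihilates $\ker p$, namely the cyclic group generated by $(rt)^{m/d}$. A one-line computation from \eqref{eq-Dm-rho-k} shows that $rt$ acts with determinant $1$ and trace $2\cos\frac{2k\uppi}{m}$, hence with eigenvalues $e^{\pm 2\mathrm{i} k\uppi/m}$; since $d \mid k$, the power $(rt)^{m/d}$ then has both eigenvalues equal to $1$. For $k < \frac{m}{2}$ these eigenvalues are distinct, so $rt$ is diagonalisable and $(rt)^{m/d} = \Id$ follows at once; the borderline case $k = \frac{m}{2}$, where $rt = -\Id$, is dispatched by observing that $m/d = 2(k/d)$ is even. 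I do not expect any real obstacle in this lemma: the entire content is the observation that the representation-defining datum $2\cos\frac{k\uppi}{m}$ depends only on the ratio $\frac{k}{m}$, and the only step deserving a moment's care is the index bound $1 \le \frac{k}{d} \le \frac{m/d}{2}$ that makes $\rho_{\frac{k}{d}}$ well defined.
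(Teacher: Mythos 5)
Your proof is correct: the paper states this lemma without proof (it is among those introduced by ``The following lemmas are easy to verify''), and your argument via the identity $\cos\frac{(k/d)\uppi}{m/d} = \cos\frac{k\uppi}{m}$ together with the check that $1 \le \frac{k}{d} \le \frac{m/d}{2}$ makes $\rho_{\frac{k}{d}}$ well defined is exactly the intended verification. Your additional intrinsic check that $\rho_k$ kills the kernel $\langle (rt)^{m/d}\rangle$, via the eigenvalues $e^{\pm 2\mathrm{i}k\uppi/m}$ of $rt$ and the separate treatment of $k=\frac{m}{2}$, is also correct, though redundant given the direct agreement on generators.
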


\begin{lemma} \label{lem-2.7}
If $k < \frac{m}{2}$, then there exists a unique up to a $\mathbb{C}^\times$-scalar nonzero bilinear form $(- |-)$ on $V_{r,t}$ which is invariant under the action $\rho_k$ of $D_m$:
\begin{equation}  \label{eq-Bform}
    (\beta_r|\beta_r) = (\beta_t| \beta_t) = 1, \quad (\beta_r|\beta_t) = (\beta_t| \beta_r) = -\cos \frac{k \uppi}{m}.
\end{equation}
In particular, $(- |-)$ is symmetric.
\end{lemma}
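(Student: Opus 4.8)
The plan is to reduce $D_m$-invariance to invariance under the two generators $r$ and $t$, and then solve a small linear system for the entries of the Gram matrix. I would write an arbitrary bilinear form on $V_{r,t}$ as a matrix $B = \begin{pmatrix} a & b \\ c & d\end{pmatrix}$ in the basis $\{\beta_r,\beta_t\}$, so that $a = (\beta_r|\beta_r)$, $b = (\beta_r|\beta_t)$, $c = (\beta_t|\beta_r)$, $d = (\beta_t|\beta_t)$. Reading off \eqref{eq-Dm-rho-k} and abbreviating $\lambda := 2\cos\frac{k\uppi}{m}$, the generators act by the matrices
\[
R = \begin{pmatrix} -1 & \lambda \\ 0 & 1\end{pmatrix}, \qquad
T = \begin{pmatrix} 1 & 0 \\ \lambda & -1\end{pmatrix}.
\]
Since $r$ and $t$ generate $D_m$, a form $B$ is $\rho_k$-invariant if and only if $R^{\top} B R = B$ and $T^{\top} B T = B$.

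Next I would impose the first relation. A direct multiplication shows that $R^{\top} B R = B$ forces $b = c = -\tfrac{\lambda}{2}a$, while the diagonal entries impose no further constraint once these off-diagonal relations hold. Interchanging the roles of $r$ and $t$ (equivalently, repeating the computation with $T$) yields $b = c = -\tfrac{\lambda}{2}d$ from $T^{\top} B T = B$. Thus every invariant form is automatically symmetric (because $b = c$) and is pinned down by the single off-diagonal value together with the relation $\tfrac{\lambda}{2}a = \tfrac{\lambda}{2}d$.

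Here the hypothesis $k < \frac{m}{2}$ enters: it guarantees $0 < \frac{k\uppi}{m} < \frac{\uppi}{2}$, hence $\lambda = 2\cos\frac{k\uppi}{m} \neq 0$, so the two relations force $a = d$. Consequently $B = a\begin{pmatrix} 1 & -\cos\frac{k\uppi}{m} \\ -\cos\frac{k\uppi}{m} & 1\end{pmatrix}$, a one-parameter family. Setting $a = 1$ gives precisely the form \eqref{eq-Bform}, exhibiting a nonzero invariant form, and the one-parameter description shows it is unique up to a $\mathbb{C}^\times$-scalar. I would then remark that uniqueness also follows conceptually: for $k < \frac{m}{2}$ the representation $\rho_k$ is irreducible, so an invariant bilinear form, viewed as a homomorphism $V_{r,t} \to V_{r,t}^*$ of $D_m$-representations, is unique up to scalar by Schur's lemma (Lemma \ref{lem-schur}) once nonzero existence is known — and existence is exactly what the explicit matrix supplies.

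The computation is routine, so there is no serious obstacle; the only point demanding care is the use of $k < \frac{m}{2}$ to ensure $\lambda \neq 0$. Without it (that is, when $m$ is even and $k = \frac{m}{2}$) the constraint $a = d$ disappears and the space of invariant forms becomes two-dimensional, consistent with the decomposition $\rho_{m/2} \simeq \varepsilon_r \oplus \varepsilon_t$ recorded above. I would flag this as a sanity check rather than a difficulty.
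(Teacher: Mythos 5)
Your proof is correct and takes essentially the same approach as the paper's: both express the generators as the matrices $R, T$ in the basis $(\beta_r, \beta_t)$, reduce $D_m$-invariance to the two matrix equations $R^{\mathrm{T}} B R = B$ and $T^{\mathrm{T}} B T = B$, and solve them directly, with $k < \frac{m}{2}$ entering only to guarantee $\cos \frac{k \uppi}{m} \neq 0$ so that the diagonal entries are forced equal. Your closing Schur's-lemma remark and the $k = \frac{m}{2}$ sanity check are harmless additions beyond what the paper records, but the core computation is identical.
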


\begin{proof}
  Under the ordered basis $(\beta_r, \beta_t)$ of $V_{r,t}$, we denote the representing matrix of $(- | -)$ by $B \in M_2(\mathbb{C})$, and the matrices of $r$ and $t$ by $R$ and $T$ respectively,
  \begin{equation*}
    R = \begin{pmatrix}
          -1 & 2\cos \frac{k \uppi}{m} \\
          0 & 1
        \end{pmatrix}, \quad
    T = \begin{pmatrix}
          1 & 0 \\
          2 \cos \frac{k \uppi}{m} & -1
        \end{pmatrix}.
  \end{equation*}
  Note that $k < \frac{m}{2}$ implies $\cos \frac{k \uppi}{m} \ne 0$.
  The condition that $(-|-)$ is $D_m$-invariant is equivalent to the equations
  \begin{equation}   \label{eq-bil-mat}
    R^\mathrm{T} B R = B, \quad T^\mathrm{T} B T = B.
  \end{equation}
  Here the superscript $(-)^\mathrm{T}$ denotes the transpose of a matrix.
  By direct computations, Equation \eqref{eq-bil-mat} suggests that $B$ must be a $\mathbb{C}^\times$-multiple of
  \begin{equation*}
    \begin{pmatrix}
      1 & -\cos \frac{k \uppi}{m} \\
      -\cos \frac{k \uppi}{m} & 1
    \end{pmatrix}.
  \end{equation*}
  The lemma is proved.
\end{proof}

Recall that a sesquilinear form $[- | -]$ on a complex vector space $V$ is a binary function $[- | -] :  V \times V \to \mathbb{C}$ which is linear in the first variable and conjugate-linear in the second variable.
By the same proof of Lemma \ref{lem-2.7}, we have:

\begin{lemma} \label{lem-fdih-sesqui}
  Suppose $k < \frac{m}{2}$, and $[- | -]$ is a nonzero sesquilinear form on $V_{r,t}$ which is $D_m$-invariant under the action $\rho_k$.
  Then there exists $c \in \mathbb{C}^\times$ such that
  \begin{equation*}
        [\beta_r|\beta_r] = [\beta_t| \beta_t] = c, \quad [\beta_r|\beta_t] = [\beta_t| \beta_r] = -c \cos \frac{k \uppi}{m}.
  \end{equation*}
  Furthermore, $[- | -]$ is Hermitian if and only if $c \in \mathbb{R}^\times$.
\end{lemma}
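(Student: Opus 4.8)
The plan is to reproduce the matrix computation of Lemma~\ref{lem-2.7} almost verbatim, the one new ingredient being careful bookkeeping of the conjugation in the second slot. Fix the ordered basis $(\beta_r, \beta_t)$ and let $R, T \in M_2(\mathbb{C})$ denote the representing matrices of $r$ and $t$ under $\rho_k$, exactly as in the proof of Lemma~\ref{lem-2.7}. I would encode the sesquilinear form in the Gram matrix $B \in M_2(\mathbb{C})$ with entries $B_{ij} = [\beta_i|\beta_j]$, so that $[x|y] = x^{\mathrm{T}} B\, \overline{y}$ for column coordinate vectors $x, y$ (linear in $x$, conjugate-linear in $y$, with the conjugation landing on the second argument).

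The crucial observation is that $R$ and $T$ are \emph{real} matrices: their entries are among $\pm 1$ and $2\cos\frac{k \uppi}{m} \in \mathbb{R}$. Consequently, for any $w \in D_m$ with representing matrix $W$ we have $\overline{W} = W$, and the invariance $[Wx|Wy] = [x|y]$ becomes $x^{\mathrm{T}} W^{\mathrm{T}} B\, \overline{W}\, \overline{y} = x^{\mathrm{T}} W^{\mathrm{T}} B W\, \overline{y} = x^{\mathrm{T}} B\, \overline{y}$ for all $x, y$. Thus $B$ satisfies precisely the same system $R^{\mathrm{T}} B R = B$, $T^{\mathrm{T}} B T = B$ as Equation~\eqref{eq-bil-mat}, and the linear-algebra computation already performed for Lemma~\ref{lem-2.7} shows that $B$ must be a scalar multiple $B = cM$ of the real symmetric matrix $M$ displayed in that proof. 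Since $[-|-]$ is nonzero we have $c \ne 0$, and reading off the entries of $cM$ gives the asserted values of $[\beta_r|\beta_r]$, $[\beta_t|\beta_t]$, $[\beta_r|\beta_t]$ and $[\beta_t|\beta_r]$.

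For the Hermitian criterion, I would note that $[-|-]$ is Hermitian exactly when $B = B^{*}$, where $B^{*} = \overline{B}^{\,\mathrm{T}}$ is the conjugate transpose. Since $B = cM$ with $M$ real and symmetric, we have $B^{*} = \overline{c}\, M$, so $B = B^{*}$ holds if and only if $c = \overline{c}$, that is $c \in \mathbb{R}^{\times}$. The only genuinely delicate point in the whole argument --- rather than a real obstacle --- is to fix the conjugation convention consistently and to recognize that it is precisely the reality of $R$ and $T$ that collapses the sesquilinear invariance equations back to the bilinear system~\eqref{eq-bil-mat}; once this reduction is made explicit, nothing beyond Lemma~\ref{lem-2.7} is required.
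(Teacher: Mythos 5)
Your proposal is correct and matches the paper's own treatment: the paper proves Lemma \ref{lem-fdih-sesqui} precisely ``by the same proof of Lemma \ref{lem-2.7}'', and your explicit observation that the reality of $R$ and $T$ collapses the sesquilinear invariance conditions to the bilinear system \eqref{eq-bil-mat} is exactly the reduction the paper leaves implicit. The Hermitian criterion via $B = cM = B^{*} \Leftrightarrow c = \overline{c}$ is likewise the intended argument, so nothing further is needed.
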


\subsection{Representations of the infinite dihedral group} \label{subsec-inf-dih}
This subsection can be skipped before the reader gets into Section \ref{sec-gen}.

Let $D_\infty := \langle r,t \mid r^2 = t^2 = e \rangle$ be the infinite dihedral group.
There are four $D_\infty$-representations of dimension 1, namely, $\II, \varepsilon, \varepsilon_r$ and $\varepsilon_t$ (defined in the same way as in Equations \eqref{eq-epsilon-rt} and \eqref{eq-epsilon}).

For $x, y \in \mathbb{C}$, let $\varrho_{x,y}: D_\infty \to GL(V_{r,t})$ be the representation of $D_\infty$ on the space $V_{r,t} = \mathbb{C}\beta_r \oplus \mathbb{C}\beta_t$ defined by
\begin{align*}
    r \cdot \beta_r & := - \beta_r, & r \cdot \beta_t & := \beta_t + x \beta_r,\\
    t\cdot \beta_t & := - \beta_t, & t \cdot \beta_r & := \beta_r + y \beta_t.
\end{align*}
Here we use the variant symbol $\varrho$ to distinguish it from the notation $\rho$ in Subsection \ref{subsec-f-dih}.

\begin{lemma} \label{lem-2.12-infdih} \leavevmode
  \begin{enumerate}
    \item \label{lem-2.12-1} Suppose $x, y, x^\prime, y^\prime \in \mathbb{C}^\times$. Then, $\varrho_{x,y} \simeq \varrho_{x^\prime, y^\prime}$ if and only if $x y = x^\prime y^\prime$.
    \item \label{lem-2.12-2} Suppose $x, y \in \mathbb{C}^\times$. Then, $\varrho_{x,y}$ is irreducible if and only if $x y \neq 4$.
    \item \label{lem-2.12-3} Suppose $x, y \in \mathbb{C}^\times$. Then $\varrho_{x,0} \simeq \varrho_{1,0}$ and  $\varrho_{0,y} \simeq \varrho_{0,1}$.
        Moreover, we have $\varrho_{0,0} \simeq  \varepsilon_r \oplus \varepsilon_t$.
  \end{enumerate}
\end{lemma}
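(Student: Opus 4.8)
The plan is to translate everything into the matrices of $r$ and $t$ in the ordered basis $(\beta_r,\beta_t)$, namely
\[
  R = \begin{pmatrix} -1 & x \\ 0 & 1 \end{pmatrix}, \qquad
  T = \begin{pmatrix} 1 & 0 \\ y & -1 \end{pmatrix},
\]
and then to read off the three assertions from the single product $RT$, which represents the action of $rt$:
\[
  RT = \begin{pmatrix} xy-1 & -x \\ y & -1 \end{pmatrix}, \qquad
  \det(RT) = 1, \qquad \operatorname{tr}(RT) = xy-2 .
\]
Thus the eigenvalues of $RT$ are the roots of $\mu^2 - (xy-2)\mu + 1$, and the quantity $xy$ will be the one invariant controlling the whole lemma. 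A further computation I will use repeatedly is conjugation by the diagonal matrix $P = \operatorname{diag}(a,b)$: one checks directly that $PRP^{-1}$ and $PTP^{-1}$ are the matrices of $\varrho_{\lambda x,\, \lambda^{-1} y}$ with $\lambda = a/b$, so that $\varrho_{x,y} \simeq \varrho_{\lambda x,\, \lambda^{-1} y}$ for every $\lambda \in \mathbb{C}^\times$.

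For part \eqref{lem-2.12-1}, the ``only if'' direction will follow from the fact that isomorphic representations have equal characters: $\varrho_{x,y} \simeq \varrho_{x',y'}$ forces $\operatorname{tr}(\varrho_{x,y}(rt)) = \operatorname{tr}(\varrho_{x',y'}(rt))$, i.e.\ $xy-2 = x'y'-2$. For the converse, when $xy = x'y'$ with all four entries nonzero, the scaling isomorphism above with $\lambda = x'/x \in \mathbb{C}^\times$ sends $(x,y)$ to $(\lambda x, \lambda^{-1} y) = (x', x y / x') = (x', y')$, so $\varrho_{x,y} \simeq \varrho_{x',y'}$.

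For part \eqref{lem-2.12-2}, I will use that reducibility of the two-dimensional $\varrho_{x,y}$ is equivalent to $R$ and $T$ having a common eigenvector. Listing eigenlines: $R$ fixes $\mathbb{C}\beta_r$ (eigenvalue $-1$) and $\mathbb{C}(x,2)^{\mathrm{T}}$ (eigenvalue $+1$), while $T$ fixes $\mathbb{C}\beta_t$ and $\mathbb{C}(2,y)^{\mathrm{T}}$. Since $x,y \neq 0$, the only possible coincidence among these four lines is $(x,2)^{\mathrm{T}} \parallel (2,y)^{\mathrm{T}}$, which happens exactly when $xy = 4$. Hence a common eigenvector exists if and only if $xy = 4$, giving irreducibility if and only if $xy \neq 4$. (Conceptually this matches the $RT$ picture: as $r$ conjugates $rt$ to $(rt)^{-1}$, it swaps the two eigenlines of $RT$ whenever its eigenvalues are distinct, which for $x,y \neq 0$ is precisely the case $xy \neq 4$.)

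For part \eqref{lem-2.12-3}, the scaling isomorphism again does the work: with $y=0$ and $\lambda = 1/x$ it gives $\varrho_{x,0} \simeq \varrho_{1,0}$, and symmetrically $\varrho_{0,y} \simeq \varrho_{0,1}$. Finally, when $x=y=0$ the matrices $R$ and $T$ are diagonal, so $\mathbb{C}\beta_r$ and $\mathbb{C}\beta_t$ are invariant lines on which $(r,t)$ act by $(-1,1)$ and $(1,-1)$, which are by definition $\varepsilon_r$ and $\varepsilon_t$; hence $\varrho_{0,0} \simeq \varepsilon_r \oplus \varepsilon_t$. The only step requiring real thought is the ``if'' direction of \eqref{lem-2.12-1}, where one must exhibit an explicit intertwiner rather than merely match an invariant; the diagonal scaling is what makes this transparent, and it simultaneously powers part \eqref{lem-2.12-3}. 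Everything else reduces to routine $2 \times 2$ matrix computations.
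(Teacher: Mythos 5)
Your proof is correct, and its skeleton coincides with the paper's: work with the $2 \times 2$ matrices of $r$ and $t$ in the basis $(\beta_r, \beta_t)$, produce isomorphisms by diagonal changes of basis, and test reducibility via common eigenvectors. You deviate in two small but genuine ways. For the ``only if'' direction of \eqref{lem-2.12-1}, the paper solves the intertwining equations $P R_{x,y} = R_{x',y'} P$ and $P T_{x,y} = T_{x',y'} P$ for a general $P = \bigl(\begin{smallmatrix} a & b \\ c & d \end{smallmatrix}\bigr)$, finding $b = c = 0$, $ax = dx'$, $dy = ay'$, hence $xy = x'y'$; your trace argument $\operatorname{tr}(\varrho_{x,y}(rt)) = xy - 2$ reaches the same conclusion in one line and explains conceptually why $xy$ is the complete invariant (it is, up to a shift, the character value at $rt$). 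The paper's longer computation does buy something yours does not: it shows any intertwiner between such representations is diagonal, which is precisely the computation underlying Lemma \ref{lem-2.11}\eqref{lem-infdih-schur} (endomorphisms of $\varrho_z$, $\varrho_r^t$, $\varrho_t^r$ are scalars), stated in the paper without a separate proof. For \eqref{lem-2.12-2}, the paper treats the reducible case by exhibiting the vector $\beta_r + \beta_t$ fixed by $\varrho_{2,2}$ and invoking \eqref{lem-2.12-1}, then separately rules out common eigenvectors when $xy \neq 4$; your uniform enumeration of the four eigenlines $\mathbb{C}\beta_r$, $\mathbb{C}(x,2)^{\mathrm{T}}$, $\mathbb{C}\beta_t$, $\mathbb{C}(2,y)^{\mathrm{T}}$ settles both directions in a single parallelism check, and it is complete because $R$ and $T$ each have the distinct eigenvalues $\pm 1$, so those four lines exhaust the invariant lines of each. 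Your treatment of \eqref{lem-2.12-3} via the scaling isomorphism fills in the verification the paper calls straightforward. No gaps.
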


\begin{proof}
  Denote
  \begin{equation*}
    R_{x,y} := \begin{pmatrix}
                 -1 & x \\
                 0 & 1
              \end{pmatrix}, \quad
    T_{x,y} := \begin{pmatrix}
                 1 & 0 \\
                 y & -1
              \end{pmatrix}.
  \end{equation*}
  If $\varrho_{x,y} \simeq \varrho_{x^\prime, y^\prime}$, then there exists an invertible matrix
  $P=(\begin{smallmatrix}
      a & b \\
      c & d
    \end{smallmatrix}) \in GL_2 (\mathbb{C})$
  such that
  \begin{equation}   \label{eq-inf-dih-rep-1}
    P R_{x,y} = R_{x^\prime,y^\prime} P, \quad P T_{x,y} = T_{x^\prime,y^\prime} P.
  \end{equation}
  A direct computation yields $a x = d x^\prime$, $d y = a y^\prime$, $b = c =0$ and hence $a,d \ne 0$.
  Thus $x y = x^\prime y^\prime$.
  Conversely, if $x y = x^\prime y^\prime$, then the invertible matrix $P =
  (\begin{smallmatrix}
     x^\prime & 0 \\
     0 & x
   \end{smallmatrix})$
   satisfies Equation \eqref{eq-inf-dih-rep-1} (note that $x,y,x^\prime,y^\prime \in \mathbb{C}^\times$).
   Thus $\varrho_{x,y} \simeq \varrho_{x^\prime, y^\prime}$.

   For \eqref{lem-2.12-2}, note that the vector $\beta_r + \beta_t \in V_{r,t}$ is fixed by $\varrho_{2,2}(r)$ and $\varrho_{2,2}(t)$ simultaneously.
   Thus when $xy = 4$ the representation $\varrho_{x,y}$ ($\simeq \varrho_{2,2}$ by \eqref{lem-2.12-1}) is reducible.
   Suppose now $xy \ne 4$.
   If $\varrho_{x,y}$ is reducible, then there is a common eigenvector of $\varrho_{x,y}(r)$ and $\varrho_{x,y}(t)$ in $V_{r,t}$.
   The $-1$-eigenvector (up to a scalar) of $\varrho_{x,y}(r)$ is $\beta_r$, but $\beta_r$ is not an eigenvector of $\varrho_{x,y}(t)$ since $y \ne 0$.
   The $+1$-eigenvector of $\varrho_{x,y}(r)$ is $x \beta_r + 2 \beta_t$, but $\varrho_{x,y}(t) (x \beta_r + 2 \beta_t) = x \beta_r + (xy -2) \beta_t$ is not a multiple of $x \beta_r + 2 \beta_t$ since $x \ne 0$ and $xy \ne 4$.
   Thus $\varrho_{x,y}$ is irreducible if $xy \ne 4$.
   The verification of \eqref{lem-2.12-3} is also straightforward.
\end{proof}

To simplify the notations and avoid any ambiguity, for any $z \in \mathbb{C}$, choose and fix $u = u(z) \in \mathbb{C}$ such that $u^2 = z$, and if $z \in \mathbb{R}^+$ then we choose $u$ to be positive.
We then define $\varrho_z := \varrho_{u,u}$.

When $z = 4 \cos^2 \frac{k \uppi}{m}$ for some $k,m \in \mathbb{N}$ such that $1 \leq k < \frac{m}{2}$, $\varrho_z$ factors through $D_m$ as follows:
\begin{equation*}
    \xymatrix{D_\infty \ar[rr]^{\varrho_{4 \cos^2 \frac{k \uppi}{m}}} \ar@{->>}[rd]& & GL(V_{r,t}) \\
             & D_{m} \ar[ur]_{\rho_{k}} &}
\end{equation*}

When $z = 4$, $\varrho_4 = \varrho_{2,2}$ is the geometric representation of $D_\infty$.
In this case, the vector $\beta_r + \beta_t$ spans a subrepresentation isomorphic to $\II$, and the quotient is isomorphic to $\varepsilon$.

When $z = 0$, we have $\varrho_0 = \varrho_{0,0} \simeq \varepsilon_r \oplus \varepsilon_t$.

Besides, denote $\varrho_r^t := \varrho_{1,0}$ and $\varrho_t^r := \varrho_{0,1}$.
Then both $\varrho_r^t$ and $\varrho_t^r$ are indecomposable.
The representation $\varrho_r^t$ has a subrepresentation $\varepsilon_r$ with quotient $\varepsilon_t$, while $\varrho_t^r$ has a subrepresentation $\varepsilon_t$ with quotient $\varepsilon_r$.

In addition, there is an ``exotic'' representation (which we will not use in this article) of $D_\infty$ defined by $\varrho_{\varepsilon}^{\II}: r \mapsto (\begin{smallmatrix}
  -1 & 0 \\
  0 & 1
  \end{smallmatrix}),
  t \mapsto (\begin{smallmatrix}
              -1 & 1 \\
              0 & 1
            \end{smallmatrix})$.
It has $\varepsilon$ as its subrepresentation with quotient $\II$.

\begin{lemma}\label{lem-infdih-ind-rep}
  All the indecomposable representations of $D_\infty$ of dimension $1$ or $2$  are given by:
  \begin{equation}   \label{eq-list-infdih}
    \{\II, \varepsilon, \varepsilon_r, \varepsilon_t\} \cup \{ \varrho_r^t, \varrho_t^r, \varrho_{\varepsilon}^{\II}\} \cup \{\varrho_z \mid z \in \mathbb{C}^\times\}.
  \end{equation}
  These representations are non-isomorphic to each other.
\end{lemma}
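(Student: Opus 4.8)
The plan is to argue by dimension, deferring the real work to the two-dimensional case. In dimension one, the relations $r^2=t^2=e$ force $r,t$ to act by $\pm 1$, giving exactly the four characters $\II,\varepsilon,\varepsilon_r,\varepsilon_t$, all indecomposable. So suppose $\dim V=2$ and write $R=\rho(r)$, $T=\rho(t)$. Each is an involution, hence diagonalizable over $\mathbb{C}$ with eigenvalues in $\{\pm 1\}$. If either of $R,T$ equals $\pm\Id$, then the eigenline decomposition of the other is invariant under both maps, so $V$ decomposes; thus for an indecomposable $V$ both $R$ and $T$ are genuine reflections (trace $0$, eigenvalues $+1,-1$). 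Let $L_R,L_T\subseteq V$ denote their $(-1)$-eigenlines; the two cases $L_R\neq L_T$ and $L_R=L_T$ are exhaustive.

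First I would treat $L_R\neq L_T$. Choosing $\beta_r$ spanning $L_R$ and $\beta_t$ spanning $L_T$ yields a basis, and the reflection together with trace-zero conditions force $R\beta_r=-\beta_r$, $R\beta_t=\beta_t+x\beta_r$, $T\beta_t=-\beta_t$, $T\beta_r=\beta_r+y\beta_t$ for scalars $x,y$; that is, $V\simeq\varrho_{x,y}$. Now Lemma~\ref{lem-2.12-infdih} closes this case: $\varrho_{0,0}\simeq\varepsilon_r\oplus\varepsilon_t$ is decomposable and excluded; $\varrho_{x,0}\simeq\varrho_r^t$ and $\varrho_{0,y}\simeq\varrho_t^r$ for $x,y\in\mathbb{C}^\times$; and for $x,y\in\mathbb{C}^\times$ one has $\varrho_{x,y}\simeq\varrho_z$ with $z=xy\in\mathbb{C}^\times$, each $\varrho_z$ being indecomposable (irreducible for $z\neq 4$, and the non-split extension of $\varepsilon$ by $\II$ for $z=4$).

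The genuinely new case, and the main obstacle, is $L_R=L_T=:L$, a common $(-1)$-eigenline affording a subrepresentation $\cong\varepsilon$. Choosing any complement to $L$ and using trace-zero, I can write $R=\left(\begin{smallmatrix}-1 & p\\ 0 & 1\end{smallmatrix}\right)$ and $T=\left(\begin{smallmatrix}-1 & q\\ 0 & 1\end{smallmatrix}\right)$ simultaneously. A common $(+1)$-eigenvector (hence a complementary $\II$-subrepresentation, hence decomposability) exists precisely when $p=q$, so indecomposability forces $p\neq q$. The key computation is that conjugating by an upper-triangular $P=\left(\begin{smallmatrix}\mu & \nu\\ 0 & \lambda\end{smallmatrix}\right)$ sends the pair $(p,q)$ to $\bigl((\mu p+2\nu)/\lambda,\,(\mu q+2\nu)/\lambda\bigr)$; choosing $\nu=-\mu p/2$ and $\mu/\lambda=1/(q-p)$ then normalizes $(p,q)$ to $(0,1)$, i.e.\ $V\simeq\varrho_\varepsilon^{\II}$. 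Thus this case contributes exactly $\varrho_\varepsilon^{\II}$, and combining the three cases gives precisely the list \eqref{eq-list-infdih}.

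Finally I would verify pairwise non-isomorphism. Dimension separates the one- and two-dimensional representations, and the four characters are separated by their values on $r,t$. Among the two-dimensional ones, Lemma~\ref{lem-2.12-infdih} gives $\varrho_z\simeq\varrho_{z'}$ iff $z=z'$, so $\{\varrho_z\}_{z\in\mathbb{C}^\times}$ is already pairwise distinct; each $\varrho_z$ with $z\neq 4$ is irreducible and hence distinct from the reducible indecomposables $\varrho_4,\varrho_r^t,\varrho_t^r,\varrho_\varepsilon^{\II}$. These last four are then separated by their composition factors together with which factor is the unique proper subrepresentation: the subrepresentation is $\II$ for $\varrho_4$, $\varepsilon$ for $\varrho_\varepsilon^{\II}$, $\varepsilon_r$ for $\varrho_r^t$, and $\varepsilon_t$ for $\varrho_t^r$. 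I expect the normalization step in the case $L_R=L_T$ to be the only nonroutine point, everything else being either bookkeeping or a direct appeal to Lemma~\ref{lem-2.12-infdih}.
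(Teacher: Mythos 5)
Your proposal is correct and follows essentially the same route as the paper's proof: classify the one-dimensional characters, note that in an indecomposable two-dimensional representation both generators must be genuine reflections, split into the cases where the $(-1)$-eigenlines differ (yielding $\varrho_{x,y}$ and hence $\varrho_r^t$, $\varrho_t^r$, $\varrho_z$ via Lemma \ref{lem-2.12-infdih}) or coincide (yielding $\varrho_\varepsilon^{\II}$), and separate the list by Lemma \ref{lem-2.12-infdih} together with sub/quotient structure. Your explicit upper-triangular conjugation normalizing $(p,q)$ to $(0,1)$ merely spells out what the paper leaves as ``one may easily verify,'' so there is no substantive difference.
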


\begin{proof}
  Clearly $\II, \varepsilon, \varepsilon_r$ and $\varepsilon_t$ are the only $D_\infty$-representations of dimension one.

  Suppose $\varrho$ is an indecomposable representation of $D_\infty$ of dimension $2$.
  Since $r^2 = e$, the linear map $\varrho(r)$ is semisimple with possible eigenvalues $\pm 1$.
  Note that $\varrho(r)$ is neither
  $(\begin{smallmatrix}
      1 & 0 \\
      0 & 1
    \end{smallmatrix})$ nor
  $(\begin{smallmatrix}
      -1 & 0 \\
      0 & -1
    \end{smallmatrix})$. 
    Otherwise, since $\varrho(t)$ is also semisimple, $\varrho$ would be decomposable.
  Thus, the matrix of $\varrho(r)$ is similar to
  $(\begin{smallmatrix}
      -1 & 0 \\
      0 & 1
    \end{smallmatrix})$.
  The same result holds for $t$.
  Denote by $V$ the representation space of $\varrho$, by $\beta_r$ the unique (up to scalars) $-1$-eigenvector of $\varrho(r)$ in $V$, and by $\beta_t$ the $-1$-eigenvector of $\varrho(t)$.

  If $\beta_r$ and $\beta_t$ are proportional, then we denote by $\gamma_r$ the $+1$-eigenvector of $\varrho(r)$ in $V$.
  Then $(\beta_r, \gamma_r)$ is a basis of $V$.
  Under this ordered basis, $\varrho(r) = (
  \begin{smallmatrix}
    -1 & 0 \\
    0 & 1
  \end{smallmatrix})$, and $\varrho (t) =
  (\begin{smallmatrix}
     -1 & a \\
     0 & 1
   \end{smallmatrix})$ for some $a \in \mathbb{C}^\times$.
  One may easily verify that $\varrho \simeq \varrho_\varepsilon^\II$.

  If $\beta_r$ and $\beta_t$ are linearly independent, then under the ordered basis $(\beta_r, \beta_t)$ we have $\varrho(r) =
  (\begin{smallmatrix}
     -1 & x \\
     0 & 1
   \end{smallmatrix})$ and $\varrho(t) =
   (\begin{smallmatrix}
      1 & 0 \\
      y & -1
    \end{smallmatrix})$ for some $x,y \in \mathbb{C}$.
  Therefore $\varrho \simeq \varrho_{x,y}$.
  If $x = 0$, then $y \ne 0$ and $\varrho \simeq \varrho_t^r$.
  If $y = 0$, then $x \ne 0$ and $\varrho \simeq \varrho_r^t$.
  If $xy \ne 0$, then $\varrho \simeq \varrho_z$ where $z = xy$.

  In view of Lemma \ref{lem-2.12-infdih} and the structure of $\varrho_r^t, \varrho_t^r$ and $\varrho_{\varepsilon}^{\II}$, the representations in the list \eqref{eq-list-infdih} are non-isomorphic to each other.
\end{proof}

\begin{remark}
  In fact, all the irreducible representations of $D_\infty$ are included in the list \eqref{eq-list-infdih}.
  They are: $\{\II, \varepsilon, \varepsilon_r, \varepsilon_t\} \cup \{\varrho_z \mid z \in \mathbb{C}^\times \setminus \{4\}\}$.
\end{remark}

Similar to the proof of Lemma \ref{lem-2.7}, we have the following lemma.

\begin{lemma}\label{lem-2.11}
  Suppose $\varrho \in \{\varrho_r^t, \varrho_t^r\} \cup \{\varrho_z \mid z \in \mathbb{C}^\times\}$.
  \begin{enumerate}
    \item \label{lem-infdih-schur} An endomorphism of $\varrho$ must be a scalar.
    \item \label{2.4.4} There is a unique up to a $\mathbb{C}^\times$-scalar $D_\infty$-invariant nonzero bilinear form $(-|-)$ on the representation space of $\varrho$. The representing matrix of $(-|-)$ (with respect to the ordered basis $(\beta_r, \beta_t)$) is
         \begin{gather*}
           \begin{pmatrix}
            1 & -\frac{u}{2} \\
            -\frac{u}{2} & 1
           \end{pmatrix}, \text{ if } \varrho = \varrho_z, z \neq 0; \\
           \begin{pmatrix}
            1 & 0 \\
            0 & 0
           \end{pmatrix}, \text{ if } \varrho = \varrho_t^r; \\
           \begin{pmatrix}
            0 & 0 \\
            0 & 1
          \end{pmatrix}, \text{ if } \varrho = \varrho_r^t.
         \end{gather*}
  \end{enumerate}
\end{lemma}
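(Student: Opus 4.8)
The plan is to argue entirely with the matrices $R := \varrho(r)$ and $T := \varrho(t)$ in the ordered basis $(\beta_r,\beta_t)$, exactly as in the proof of Lemma~\ref{lem-2.7}. These are read off from the definition of $\varrho_{x,y}$, with $(x,y)=(u,u)$ for $\varrho_z$, $(x,y)=(1,0)$ for $\varrho_r^t$, and $(x,y)=(0,1)$ for $\varrho_t^r$. Both parts then reduce to solving small linear systems.

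For part~\eqref{lem-infdih-schur} I would give one uniform argument covering all three families at once, rather than invoking Schur's lemma (which is unavailable for the reducible indecomposables $\varrho_4$, $\varrho_r^t$, $\varrho_t^r$). Let $\phi$ be an endomorphism of $\varrho$. Since $r$ is an involution that is neither $\Id$ nor $-\Id$, its $-1$-eigenspace is one-dimensional and spanned by $\beta_r$; as $\phi$ commutes with $\varrho(r)$ it preserves this eigenspace, forcing $\phi(\beta_r)=a\beta_r$ for some $a\in\mathbb{C}$, and symmetrically $\phi(\beta_t)=d\beta_t$. Writing $r\cdot\beta_t=\beta_t+x\beta_r$ and $t\cdot\beta_r=\beta_r+y\beta_t$, at least one of $x,y$ is nonzero in each case ($x=y=u\neq0$ for $\varrho_z$, $x=1$ for $\varrho_r^t$, $y=1$ for $\varrho_t^r$); applying $\phi$ to the relation with nonzero coefficient and comparing the $\beta_r$- and $\beta_t$-components yields $a=d$. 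Hence $\phi$ is the scalar $a$.

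For part~\eqref{2.4.4} I would follow Lemma~\ref{lem-2.7} verbatim: let $B\in M_2(\mathbb{C})$ be the matrix of a bilinear form (not assumed symmetric) and impose the invariance equations \eqref{eq-bil-mat}, namely $R^{\mathrm T}BR=B$ and $T^{\mathrm T}BT=B$. Solving this system in the four entries of $B$ gives a one-dimensional solution space in every case, which establishes existence and uniqueness up to a $\mathbb{C}^\times$-scalar simultaneously, and a generator is exactly the displayed matrix. Concretely, for $\varrho_z$ the $R$-invariance forces both off-diagonal entries to be $-\tfrac{u}{2}$ times the $(1,1)$-entry and the $T$-invariance forces them to be $-\tfrac{u}{2}$ times the $(2,2)$-entry, so (using $u\neq0$) the two diagonal entries agree and one obtains $\left(\begin{smallmatrix}1 & -u/2\\ -u/2 & 1\end{smallmatrix}\right)$ up to scalar; for $\varrho_r^t$ and $\varrho_t^r$ the diagonal involution first kills the off-diagonal entries and then the second relation kills one diagonal entry, leaving the two rank-one matrices.

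The computations are routine and I do not expect a genuine obstacle; the only point needing care is precisely the reducible indecomposable representations. In part~\eqref{lem-infdih-schur} these are the cases where the usual Schur's lemma does not apply and the eigenvector argument above is essential, and in part~\eqref{2.4.4} the representations $\varrho_r^t$ and $\varrho_t^r$ yield degenerate (rank-one) forms, so one must note that the invariant form, while unique up to scalar and nonzero, is not a perfect pairing there.
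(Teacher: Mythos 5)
Your proposal is correct and matches the paper's intended argument: the paper proves this lemma simply by noting it is ``similar to the proof of Lemma~\ref{lem-2.7}'', i.e., by imposing the invariance equations $R^{\mathrm T}BR=B$, $T^{\mathrm T}BT=B$ and solving, exactly as you do for part~\eqref{2.4.4}, and your eigenspace argument for part~\eqref{lem-infdih-schur} is just a cleaner packaging of the commutation computation already carried out in the paper's proof of Lemma~\ref{lem-2.12-infdih} (where $PR_{x,y}=R_{x,y}P$ forces $P$ diagonal and $ax=dx$, $dy=ay$). Your explicit remark that Schur's lemma is unavailable for the reducible indecomposables $\varrho_4,\varrho_r^t,\varrho_t^r$ and that the invariant forms for $\varrho_r^t,\varrho_t^r$ are degenerate is a correct and worthwhile point of care, consistent with the paper.
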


\begin{remark}
  Different from Lemma \ref{lem-fdih-sesqui}, there may not be a nonzero $D_\infty$-invariant sesquilinear form on the representation space of $\varrho_z$, for general $z \in \mathbb{C}^\times$.
\end{remark}

\subsection{Graphs and their homologies} \label{subsec-graph}
As mentioned in the introduction, we will ``glue'' together various representations of dihedral groups, and the way of ``gluing'' is encoded in a character of the first integral homology group of some graph.
In this subsection we present some basic notions and results on graphs and their homologies.
One may also refer to \cite[Ch. 4]{Sunada13}.

By definition, an \emph{(undirected) graph} $G = (S,E)$ consists of a set $S$ of vertices and a set $E$ of edges.
Each edge in $E$ is an unordered binary subset $\{s,t\}$ of $S$.

For our purpose, we only consider finite graphs without loops and multiple edges (i.e., $S$ is a finite set; there is no edge of the form $\{s,s\}$; and each pair $\{s,t\}$ occurs at most once in $E$).
Such a graph can be viewed naturally as a finite simplicial complex: vertices as $0$-simplices, edges as $1$-simplices.
In this way, the graph is regarded as a topological space.
Thus we have the notion of \emph{connected components} of a graph.

A sequence $(s_1, s_2, \dots, s_n)$ of vertices is called a \emph{path} in $G$ if $\{s_i, s_{i+1}\} \in E$, $\forall i$.
If $s_1 = s_n$, then we say the path is a \emph{closed path}.
We do not distinguish the closed paths $(s_1, \dots, s_{n-1}, s_1)$ and $(s_2, \dots, s_{n-1}, s_1, s_2)$.
Further, if $s_1, \dots, s_{n-1}$ are pairwise distinct in this closed path, then we call the path a \emph{circuit}.

For a connected graph $G$, a \emph{spanning tree} is defined to be a subgraph $T = (S,E_0)$ with the same vertex set $S$ and an edge set $E_0 \subseteq E$, such that $T$ is connected and there is no circuit in $T$.
If $G$ is not required to be connected, then we can choose arbitrarily a spanning tree for each component of $G$, and the union of these trees is called a \emph{spanning forest} of $G$.
Spanning forests always exist, but they are not unique in general.

Since a graph $G$ is regarded as a simplicial complex, we can consider its homology groups.
In the sense of \cite{Munkres84}, the chain complex (with coefficients in $\mathbb{Z}$) of $G$ looks like:
\begin{equation*}
    0 \to C_1(G) \xrightarrow{\partial} C_0(G) \to 0.
\end{equation*}
Here  $C_0(G)$ is a free abelian group with free generator set $S$, and $C_1(G)$ is a free abelian group generated by all edges endowed with an orientation (an oriented edge is denoted in the form $(s,t)$).
By convention, we regard $(s,t) = - (t,s)$ in $C_1(G)$ for an edge $\{s,t\}$.
The map $\partial$ is defined by $\partial (s,t) = t - s$.

The first homology group $H_1(G) = \ker \partial$ is a finitely generated free abelian group since $C_1(G)$ is such a group (see, for example, \cite[Lemma 11.1]{Munkres84}).
We regard a circuit $(s_1, s_2, \dots, s_n, s_1)$ as the element $(s_1, s_2) + \dots + (s_{n-1}, s_n) + (s_n, s_1)$ in $C_1(G)$.
Clearly, it is also an element in $H_1(G)$.

Fix a spanning forest $G_0 = (S,E_0)$ of $G$, and endow each edge in $E$ with an orientation.
For any oriented edge $\mathfrak{e} = (s_1,s_2) \in E \setminus E_0$, there is a unique circuit in $(S, E_0 \cup \{\mathfrak{e}\})$ of the form $(s_1, s_2, \dots, s_n, s_1)$.
We denote this circuit by $c_\fe$.

\begin{lemma}\label{lem-2.12}
   With the spanning forest $G_0$ fixed as above, we have \[H_1(G) = \bigoplus_{\fe \in E \setminus E_0} \mathbb{Z} c_\fe.\]
\end{lemma}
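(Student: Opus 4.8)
The statement packages two claims: that the fundamental circuits $c_\fe$ (for $\fe \in E \setminus E_0$) are \emph{linearly independent} in $H_1(G)$, and that they \emph{generate} $H_1(G)$. The plan is to treat these separately, the single nontrivial input being that a spanning forest supports no nonzero cycle. As a preliminary, I would record that each $c_\fe$ really lies in $H_1(G) = \ker \partial$: writing the circuit as $c_\fe = (s_1,s_2) + (s_2,s_3) + \dots + (s_n, s_1)$ in $C_1(G)$, the boundary telescopes, $\partial c_\fe = (s_2 - s_1) + (s_3 - s_2) + \dots + (s_1 - s_n) = 0$.

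For linear independence I would exploit the defining property of $c_\fe$, namely that it is the unique circuit in $(S, E_0 \cup \{\fe\})$. Consequently, as an element of $C_1(G)$, $c_\fe$ is a $\mathbb{Z}$-combination of the oriented edge $\fe$ together with oriented edges lying in the forest $E_0$, and the coefficient of $\fe$ equals $1$. In particular, for distinct $\fe, \fe' \in E \setminus E_0$, the edge $\fe$ occurs in $c_\fe$ with coefficient $1$ but does not occur in $c_{\fe'}$ at all. Consider the projection $\pi : C_1(G) \to \bigoplus_{\fe \in E \setminus E_0} \mathbb{Z}\, \fe$ forgetting the $E_0$-coordinates; then $\pi(c_\fe) = \fe$. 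Since $\{\fe \mid \fe \in E \setminus E_0\}$ is part of a free basis of $C_1(G)$, applying $\pi$ to a relation $\sum_\fe a_\fe c_\fe = 0$ forces $a_\fe = 0$ for every $\fe$. (This simultaneously shows that $\pi$ restricts to an isomorphism of $\bigoplus_\fe \mathbb{Z} c_\fe$ onto $\bigoplus_\fe \mathbb{Z}\, \fe$.)

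For the spanning claim I would take an arbitrary cycle $z \in \ker \partial$, let $b_\fe$ be the coefficient of $\fe$ in $z$ for each $\fe \in E \setminus E_0$, and form $z' := z - \sum_{\fe \in E \setminus E_0} b_\fe c_\fe$. Then $z' \in \ker \partial$, and by the coefficient computation above all its $E \setminus E_0$-coordinates vanish, so $z'$ is supported on the forest $E_0$. It remains to show that a cycle supported on a forest is zero, i.e. $H_1(G_0) = 0$; granting this, $z' = 0$ and $z = \sum_\fe b_\fe c_\fe$ lies in the span. To prove $H_1(G_0) = 0$, suppose $z' \neq 0$ and let $G'$ be the subgraph of $G_0$ consisting of the edges appearing in $z'$ with nonzero coefficient. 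As a subgraph of a forest, $G'$ is a nonempty finite forest, hence has a leaf $v$, a vertex incident to exactly one edge $g$ of $G'$. Then the coefficient of the basis vector $v$ in $\partial z'$ is $\pm(\text{coefficient of } g \text{ in } z') \neq 0$, contradicting $z' \in \ker \partial$.

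The only genuinely nontrivial point is this last one, $H_1(\text{forest}) = 0$; everything else is bookkeeping with the chosen orientations, and this is precisely where the acyclicity of the spanning forest is used. I expect the leaf argument to be the cleanest route, since it avoids invoking contractibility of trees. As a consistency check one can instead count ranks: since $\fe = c_\fe - (\text{forest part})$ gives $\partial \fe \in \mathrm{im}(\partial|_{C_1(G_0)})$, we have $\mathrm{im}\,\partial = \mathrm{im}(\partial|_{C_1(G_0)})$, which has rank $|E_0|$ because $\partial|_{C_1(G_0)}$ is injective; hence $\rank H_1(G) = |E| - |E_0| = |E \setminus E_0|$, matching the number of generators $c_\fe$.
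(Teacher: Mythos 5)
Your proposal is correct and follows essentially the same route as the paper: linear independence by reading off the coefficient of the unique non-forest edge $\fe$ in $c_\fe$, and spanning by subtracting $\sum_{\fe} b_\fe c_\fe$ from an arbitrary cycle to obtain a cycle supported on $E_0$, which must vanish. The only difference is that you make explicit, via the leaf argument, the step the paper dismisses with ``it is impossible for such an element to be a closed cycle unless $d=0$''.
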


\begin{proof}
  Clearly the elements $\{c_\fe \mid \fe \in E \setminus E_0\}$ are $\mathbb{Z}$-linearly independent in $H_1(G)$ since all edges in $c_\fe$ lie in $E_0$ except $\fe$.
  Thus we have an injection \[\bigoplus_{\fe \in E \setminus E_0} \mathbb{Z} c_\fe \hookrightarrow H_1(G).\]

  Regard $H_1(G)$ as a subgroup of $C_1(G)$.
  Then elements in $H_1(G)$ can be written in the form $\sum_{\fe \in E} a_\fe \fe$ with $a_\fe \in \mathbb{Z}$.
  Suppose $\sum_{\fe \in E} a_\fe \fe \in H_1(G)$.
  Let
  \begin{equation*}
    d := \sum_{\fe \in E} a_\fe \fe  - \sum_{\fe \in E \setminus E_0} a_\fe c_\fe \in H_1(G).
  \end{equation*}
  Then any edge that occurs in $d$ with a nonzero coefficient belongs to $E_0$, the edge set of the forest $G_0$.
  It is impossible for such an element, $d$, to be a closed cycle unless $d = 0$.
  Therefore, $H_1(G) = \bigoplus_{\fe \in E \setminus E_0} \mathbb{Z} c_\fe$.
\end{proof}

\subsection{The Coxeter graph}

Let $(W,S)$ be a Coxeter group with Coxeter matrix $(m_{rt})_{r,t \in S}$.
The associated Coxeter graph $G$ is a graph with labels on some edges.
The set of vertices is nothing but the set $S$ of generators.
Two vertices $r,t \in S$ are joined by an edge if $m_{rt} \ge 3$.
In this case the edge $\{r,t\}$ is labelled by $m_{rt}$.
By convention, the label $m_{rt}$ may be omitted if $m_{rt} = 3$.
The number $m_{rt}$ is merely a label, rather than a multiplicity of the edge $\{r,t\}$.
We denote again by $H_1(G)$ the first integral homology group of $G$ while forgetting the labels on its edges.

If $G$ is connected, we say that the Coxeter group is \emph{irreducible}.

\section{Classification of the generalized geometric representations} \label{sec-IR}
Let $(W,S)$ be a Coxeter group of finite rank (i.e., $\abs{S} < \infty$) with Coxeter matrix $(m_{rt})_{r,t \in S}$.
For simplicity, from here to the end of Section \ref{sec-property}, we assume:
\begin{equation} \label{eq-fin-condition}
  m_{rt} < \infty, \quad \forall r,t \in S.
\end{equation}
This section is devoted to a classification of the isomorphism classes of generalized geometric representations of $W$.

\subsection{The GGR-datum \texorpdfstring{$(k_{rt}, a_r^t)_{r,t \in S, r\ne t}$}{(k, a)}} \label{subsec-struIR}

We begin with the following lemma.

\begin{lemma} \label{lem-refl}
  Let $V$ be a vector space and $s : V \to V$ be a reflection with a reflection vector $\alpha_s$.
  Then for any $v \in V$ we have $v - s\cdot v \in \mathbb{C} \alpha_s$.
\end{lemma}

\begin{proof}
  Recall that we have $V = H_s \oplus \mathbb{C} \alpha_s$ where $H_s$ is the reflection hyperplane (see Definition \ref{def-refl-rep}).
  For any $v \in V$, write $v = v_s + a \alpha_s$, where $v_s \in H_s$, $a \in \mathbb{C}$.
  Then $s \cdot v = v_s - a \alpha_s = v - 2a \alpha_s$.
  It follows that $v - s\cdot v \in \mathbb{C} \alpha_s$.
\end{proof}

\begin{lemma}\label{lem4.2}
  Let $V = \bigoplus_{s \in S} \mathbb{C} \alpha_s$ be a generalized geometric representation of $W$, where $\alpha_s$ is a reflection vector of $s$.
  \begin{enumerate}
    \item \label{4.2-1} Suppose $r, t \in S$, $r \ne t$. The vectors $\alpha_r, \alpha_t$ span in $V$ a subrepresentation of the dihedral subgroup $\langle r,t \rangle$.
    \item \label{4.2-2} This representation of $\langle r,t \rangle$, say $\phi_{rt}: \langle r,t \rangle \to GL(\mathbb{C} \langle \alpha_r, \alpha_t \rangle)$,  is isomorphic to $\rho_{k_{rt}}$ for some $1 \leq k_{rt} \leq \frac{m_{rt}}{2}$.
  \end{enumerate}
\end{lemma}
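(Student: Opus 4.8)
The plan is to prove both parts by directly exploiting the reflection structure of the generators $r$ and $t$ together with Lemma \ref{lem-refl}.

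For part \eqref{4.2-1}, I would set $U := \mathbb{C}\alpha_r \oplus \mathbb{C}\alpha_t \subseteq V$ and check that $U$ is stable under both $r$ and $t$, which suffices since these two elements generate $\langle r,t\rangle$. Since $r$ is a reflection with reflection vector $\alpha_r$, we have $r\cdot\alpha_r = -\alpha_r \in U$ immediately. For $r\cdot\alpha_t$, I would apply Lemma \ref{lem-refl} with $v = \alpha_t$ and $s = r$: this gives $\alpha_t - r\cdot\alpha_t \in \mathbb{C}\alpha_r$, whence $r\cdot\alpha_t \in \alpha_t + \mathbb{C}\alpha_r \subseteq U$. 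By the symmetric argument, $t\cdot\alpha_r$ and $t\cdot\alpha_t$ also lie in $U$. Therefore $U$ is a subrepresentation of $\langle r,t\rangle$, as claimed.

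For part \eqref{4.2-2}, the key observation from the computation above is that there exist scalars $x_{rt}, y_{rt} \in \mathbb{C}$ with $r\cdot\alpha_t = \alpha_t + x_{rt}\alpha_r$ and $t\cdot\alpha_r = \alpha_r + y_{rt}\alpha_t$, while $r\cdot\alpha_r = -\alpha_r$ and $t\cdot\alpha_t = -\alpha_t$. Comparing with Equation \eqref{eq-Dm-rho-k}, the representation $\phi_{rt}$ is governed by the eigenvalues of the product $rt$ (equivalently, by the product $x_{rt}y_{rt}$). The essential input is that $\langle r,t\rangle$ is a genuine quotient of the dihedral group $D_{m_{rt}}$ via $r,t \mapsto r,t$, so $(rt)^{m_{rt}} = e$ holds on $U$; this forces the linear map $rt|_U$ to have finite order dividing $m_{rt}$. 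I would then argue that $\phi_{rt}$, being a two-dimensional representation of $D_{m_{rt}}$ in which both $r$ and $t$ act with $-1$ as an eigenvalue (and with $+1$-eigenspaces of dimension one, by Lemma \ref{lem-2.4}\eqref{lem-2.4-1} applied after we know the type, or directly from the reflection property), must be isomorphic to one of the representations in the classification of Lemma \ref{lem-Dm-rep}. The one-dimensional pieces $\II, \varepsilon$ are ruled out because $r$ and $t$ act with distinct signs is impossible there, and $\varepsilon_r \oplus \varepsilon_t$ together with each irreducible $\rho_k$ all arise as $\rho_{k_{rt}}$ for suitable $1 \le k_{rt} \le \frac{m_{rt}}{2}$ (the split case $\varepsilon_r\oplus\varepsilon_t$ corresponding to $k_{rt} = \frac{m_{rt}}{2}$ when $m_{rt}$ is even).

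The main obstacle I anticipate is pinning down exactly which value $k_{rt}$ occurs and verifying the isomorphism $\phi_{rt}\simeq\rho_{k_{rt}}$ cleanly, rather than merely asserting that $\phi_{rt}$ is some two-dimensional dihedral representation. Concretely, one must show that the scalar $x_{rt}y_{rt}$ equals $4\cos^2\frac{k_{rt}\uppi}{m_{rt}}$ for an integer $k_{rt}$ in the stated range: the finite-order constraint on $rt|_U$ restricts its eigenvalues to roots of unity of the form $e^{\pm 2\pi i k_{rt}/m_{rt}}$, and a short trace computation identifies $x_{rt}y_{rt} = 2 + (\text{tr}\,rt|_U)$ with $2 + 2\cos\frac{2k_{rt}\uppi}{m_{rt}} = 4\cos^2\frac{k_{rt}\uppi}{m_{rt}}$. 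The bookkeeping to ensure $k_{rt}$ can be chosen in $[1,\frac{m_{rt}}{2}]$ (using that $\cos$ is even and the symmetry $k \mapsto m_{rt}-k$) and that $\phi_{rt}$ is not the trivial or sign representation — which would contradict $r\cdot\alpha_r = -\alpha_r \ne \alpha_r$ together with the two generators acting nontrivially — is routine but is where the real content lies.
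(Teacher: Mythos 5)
Your part \eqref{4.2-1} coincides with the paper's: both are immediate from Lemma \ref{lem-refl}, and your verification is exactly what the paper leaves implicit. For part \eqref{4.2-2}, however, you take a genuinely different route. The paper argues representation-theoretically: under the standing assumption \eqref{eq-fin-condition} the subgroup $\langle r,t\rangle \simeq D_{m_{rt}}$ is finite, so $\phi_{rt}$ is either irreducible, hence some $\rho_{k_{rt}}$ with $1 \le k_{rt} < \frac{m_{rt}}{2}$ by the classification in Lemma \ref{lem-Dm-rep}, or (by Maschke) a sum of two characters, where $\varepsilon$ is excluded because a common $(-1)$-eigenvector of $r$ and $t$ would lie in $\mathbb{C}\alpha_r \cap \mathbb{C}\alpha_t = 0$, leaving $\varepsilon_r \oplus \varepsilon_t \simeq \rho_{m_{rt}/2}$ with $m_{rt}$ even. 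You instead work with matrices: in the basis $(\alpha_r,\alpha_t)$ --- and it is here, in having a basis at all, that the generalized-geometric hypothesis of linear independence is consumed --- one has $R = (\begin{smallmatrix} -1 & x \\ 0 & 1 \end{smallmatrix})$, $T = (\begin{smallmatrix} 1 & 0 \\ y & -1 \end{smallmatrix})$, so $\det(RT) = 1$, $\operatorname{tr}(RT) = xy - 2$, and $(RT)^{m_{rt}} = \Id$ forces $RT$ to be semisimple with eigenvalues $e^{\pm 2k\uppi\mathrm{i}/m_{rt}}$, whence $xy = 4\cos^2 \frac{k\uppi}{m_{rt}}$; when $xy \ne 0$, a diagonal change of basis normalizes $x = y = 2\cos\frac{k\uppi}{m_{rt}}$ (the same computation as Lemma \ref{lem-2.12-infdih}\eqref{lem-2.12-1}), giving $\phi_{rt} \simeq \rho_{k_{rt}}$ directly from Equation \eqref{eq-Dm-rho-k}. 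This buys something real: your argument avoids quoting the classification of $D_m$-irreducibles and, unlike Maschke, survives verbatim when $m_{rt} = \infty$ --- it is in essence the analysis the paper itself must deploy in Subsection \ref{subsec-inf-dih} and Section \ref{sec-gen}. The paper's proof is shorter given Lemma \ref{lem-Dm-rep}.

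Two of your stated justifications need repair, though your own framework supplies the fixes. First, your reason for excluding $\II$ and $\varepsilon$ (``distinct signs is impossible there,'' and later ``$r\cdot\alpha_r = -\alpha_r \ne \alpha_r$ together with the two generators acting nontrivially'') does not rule out the combination $\II \oplus \varepsilon$: there both generators act nontrivially on the two-dimensional space, each with eigenvalues $\{+1,-1\}$. What kills it is that $\II \oplus \varepsilon$ would make $rt$ act as the identity, i.e.\ $RT = \Id$, which is impossible for matrices of the displayed shapes; equivalently, $k = 0$ gives $\operatorname{tr}(RT) = 2$ while the $(1,1)$-entry $xy-1 = 3$ shows $RT \ne \Id$, so $RT$ would be a nontrivial unipotent of infinite order, contradicting $(RT)^{m_{rt}} = \Id$. (This is the counterpart of the paper's use of linear independence to exclude $\varepsilon$.) Second, in the boundary case $xy = 0$ (possible only with $m_{rt}$ even and $k = \frac{m_{rt}}{2}$) you must check that finite order forces \emph{both} $x = 0$ and $y = 0$: if, say, $x = 0 \ne y$, then $(RT)^2 = (\begin{smallmatrix} 1 & 0 \\ -2y & 1 \end{smallmatrix})$ has infinite order; once $x = y = 0$, one has $\phi_{rt} = \varepsilon_r \oplus \varepsilon_t \simeq \rho_{m_{rt}/2}$ on the nose. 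With these two checks made explicit, your argument is complete and correct.
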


\begin{proof}
  The fact that $\mathbb{C} \langle \alpha_r, \alpha_t \rangle$ is $\langle r,t \rangle$-invariant follows from Lemma \ref{lem-refl}.

  For \eqref{4.2-2}, if $\phi_{rt}$ is reducible, then it splits into a direct sum of two representations of dimension one since $\langle r,t \rangle$ is a finite group.
  The sign representation $\varepsilon$ of $\langle r,t \rangle$ can not occur in $\phi_{rt}$ because $\alpha_r$ and $\alpha_t$ are linearly independent.
  In view of Lemma \ref{lem-Dm-rep}, $m_{rt}$ must be even and $\phi_{rt} \simeq \varepsilon_r \oplus \varepsilon_t \simeq \rho_{\frac{m_{rt}}{2}}$.
  If $\phi_{rt}$ is irreducible, then by Lemma \ref{lem-Dm-rep} again we have $\phi_{rt} \simeq \rho_{k_{rt}}$ for some $1 \le k_{rt} < \frac{m_{rt}}{2}$.
\end{proof}

\begin{remark}
  If $m_{rt} = 2$, i.e., $rt = tr$, then $k_{rt}$ must be $1$, and $\phi_{rt} \simeq \varepsilon_r \oplus \varepsilon_t$ splits.
\end{remark}

Let $V$ be as in Lemma \ref{lem4.2}.
Suppose $\phi_{rt} \simeq \rho_{k_{rt}}$ as in Lemma \ref{lem4.2}\eqref{4.2-2} for an arbitrary pair $r,t \in S$.
We use the notations $\beta_r, \beta_t$ to indicate the chosen basis of the representation space $V_{r,t}$ of $\rho_{k_{rt}}$ so that the action is defined as in Equation \eqref{eq-Dm-rho-k}.
Fix an isomorphism $\psi: \rho_{k_{rt}} \xrightarrow{\sim} \phi_{rt}$ (but such an isomorphism is not unique).
Clearly, $\psi(\beta_r)$ is a multiple of $\alpha_r$ and $\psi(\beta_t)$ is a multiple of $\alpha_t$, i.e., there exist $a_{r}^t, a_{t}^{r} \in \mathbb{C}^\times$ such that $\psi(\beta_r) = a_r^t \alpha_r$ and $\psi(\beta_t) = a_t^r \alpha_t$.
Thus, we have
\begin{equation*}
  r \cdot (a_t^r \alpha_t) = (a_t^r \alpha_t) + 2 \cos \frac{k_{rt}\uppi}{m_{rt}} (a_r^t \alpha_r), \quad
  t\cdot (a_r^t \alpha_r) = (a_r^t \alpha_r) + 2 \cos \frac{k_{rt}\uppi}{m_{rt}} (a_t^r \alpha_t).
\end{equation*}
Divided by the coefficients on the left hand side, the equations are equivalent to
\begin{equation*}
    r \cdot \alpha_t = \alpha_t + 2 \frac{a_r^t}{a_t^r} \cos \frac{k_{rt}\uppi}{m_{rt}} \alpha_r, \quad \quad
    t\cdot \alpha_r = \alpha_r + 2 \frac{a_t^r}{a_r^t} \cos \frac{k_{rt}\uppi}{m_{rt}} \alpha_t.
\end{equation*}
Note that $r,t \in S$ $(r\ne t)$ are arbitrary.
Therefore the action of $W$ on the representation $V$ is determined by the numbers $(k_{rt}, a_r^t)_{r,t \in S, r \neq t}$.
For later use, we give a name to the sets of such numbers.

\begin{definition}
  If a set of numbers $(k_{rt}, a_r^t)_{r,t \in S, r \neq t}$ satisfies the following:
  \begin{equation*}
    k_{rt} \in \mathbb{N}, \quad 1 \leq k_{rt} = k_{tr} \leq \frac{m_{rt}}{2}, \quad a_r^t \in \mathbb{C}^\times, \quad \forall r,t \in S, r \ne t,
  \end{equation*}
  then $(k_{rt}, a_r^t)_{r,t \in S, r \neq t}$ is called a \emph{GGR-datum} of $W$ (``GGR'' stands for ``generalized geometric representation'').
\end{definition}

Thus, we have seen that any generalized geometric representation of $W$ can be determined by a GGR-datum.

Conversely, suppose we are given a GGR-datum $(k_{rt}, a_r^t)_{r,t \in S, r \neq t}$.
We want to show that these numbers can give rise to a generalized geometric representation of $W$ in the way above.
Define for each $r \in S$ a linear transformation on the vector space $V := \bigoplus_{s \in S} \mathbb{C}\alpha_s$ by:
\begin{equation}  \label{eq-IR-action}
    r \cdot \alpha_r = - \alpha_r, \quad r \cdot \alpha_t = \alpha_t + 2 \frac{a_r^t}{a_t^r} \cos \frac{k_{rt}\uppi}{m_{rt}} \alpha_r, \quad \forall t \in S, t \neq r.
\end{equation}
Then for any $r,t \in S$ with $r \ne t$, the subspace $\mathbb{C} \langle \alpha_r, \alpha_t \rangle$ spanned by $\alpha_r$ and  $\alpha_t$ forms a representation of the dihedral subgroup $\langle r,t \rangle$ isomorphic to $\rho_{k_{rt}}$.

\begin{lemma} \label{lem-defrep}
  The space $V$ is a generalized geometric representation of $W$ under the action defined by Equation \eqref{eq-IR-action}.
\end{lemma}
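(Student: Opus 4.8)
The plan is to verify that the operators defined by \eqref{eq-IR-action} satisfy the defining relations of $W$, and then to check directly the two remaining requirements of Definition \ref{def-refl-rep} (each generator acts by a reflection, and the reflection vectors form a basis). Since $W$ is presented by the relations $r^2 = e$ (from $m_{rr}=1$) and $(rt)^{m_{rt}} = e$ for $r \ne t$, by the universal property of a presentation it suffices to confirm that the assigned operators obey these. The relation $r^2 = \Id$ is immediate from \eqref{eq-IR-action}: one has $r^2 \cdot \alpha_r = \alpha_r$, and for $t \ne r$ the extra multiple of $\alpha_r$ that $r$ introduces is negated and cancelled when $r$ is applied a second time. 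Hence each generator acts as an involution.

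The heart of the matter is the braid relation $(rt)^{m_{rt}} = \Id$ on all of $V$, not merely on the plane $M := \mathbb{C}\langle \alpha_r, \alpha_t\rangle$. I would fix $r \ne t$ and observe that, by \eqref{eq-IR-action}, for each $u \in S \setminus \{r,t\}$ the three-dimensional subspace $P_u := M + \mathbb{C}\alpha_u$ is invariant under both $r$ and $t$, because applying $r$ to $\alpha_u$ only adds a multiple of $\alpha_r$ and applying $t$ only adds a multiple of $\alpha_t$. As these subspaces together span $V$, it is enough to prove $(rt)^{m_{rt}} = \Id$ on $M$ and on each $P_u$. On $M$ this is already known, since $M \simeq \rho_{k_{rt}}$ is a representation of the dihedral group $D_{m_{rt}}$ by the paragraph preceding the lemma. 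On $P_u$ I would use the short exact sequence of $\langle r,t\rangle$-modules $0 \to M \to P_u \to P_u/M \to 0$: the quotient $P_u/M$ is one-dimensional and both $r,t$ act trivially on it, so $rt$ acts there as $1$, and the eigenvalues of $rt$ on $P_u$ are the two eigenvalues of $rt|_M$ together with $1$.

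The key point, and the main obstacle, is to rule out a Jordan block for this quotient eigenvalue $1$. This rests on the observation that $1$ is \emph{not} an eigenvalue of $rt|_M$: on $\rho_{k_{rt}}$ the product $rt$ acts as a nontrivial rotation, with eigenvalues $e^{\pm 2 k_{rt}\uppi i/m_{rt}}$ when $k_{rt} < \tfrac{m_{rt}}{2}$ and $-1$ when $k_{rt} = \tfrac{m_{rt}}{2}$, so it has no nonzero fixed vector. Consequently the eigenvalue $1$ of $rt|_{P_u}$ has algebraic multiplicity exactly one, hence a one-dimensional eigenspace; together with the fact that $rt|_M$ is diagonalizable (an operator of finite order on a complex space, since $(rt)^{m_{rt}}|_M = \Id$) and has all its eigenvalues different from $1$, this yields a full eigenbasis of $P_u$. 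Thus $rt|_{P_u}$ is diagonalizable with every eigenvalue an $m_{rt}$-th root of unity, so $(rt)^{m_{rt}}$ acts as the identity on $P_u$, and therefore on all of $V$. This is precisely where the structure of $\rho_{k_{rt}}$ (the absence of a vector fixed by $rt$) is essential.

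With both families of defining relations verified, \eqref{eq-IR-action} defines a genuine representation $W \to GL(V)$. Finally, each generator $r$ acts by a reflection: it is an involution with $r \cdot \alpha_r = -\alpha_r$ and it acts as the identity on $V/\mathbb{C}\alpha_r$, so its $(-1)$-eigenspace is exactly $\mathbb{C}\alpha_r$ while its $(+1)$-eigenspace is a hyperplane $H_r$, giving $V = H_r \oplus \mathbb{C}\alpha_r$ as in Definition \ref{def-refl-rep}. Since the reflection vectors $\{\alpha_s \mid s \in S\}$ form a basis of $V$ by construction, $V$ is a generalized geometric representation, completing the proof.
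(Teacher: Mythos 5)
Your proof is correct, but it establishes the braid relation by a genuinely different mechanism than the paper. The paper also works inside the three-dimensional invariant subspace $U = \mathbb{C}\langle \alpha_u, \alpha_r, \alpha_t\rangle$, but instead of analyzing the spectrum of $rt$ it invokes its Lemma \ref{lem-2.4}: the $+1$-eigenspaces of $r$ in the two planes $\mathbb{C}\langle\alpha_r,\alpha_u\rangle$ and $\mathbb{C}\langle\alpha_r,\alpha_t\rangle$ span a two-dimensional space of $r$-fixed vectors in $U$, likewise for $t$, and intersecting these two planes inside $U$ yields a nonzero $v$ with $r\cdot v = t\cdot v = v$; since no such vector lies in $M = \mathbb{C}\langle\alpha_r,\alpha_t\rangle$ by Lemma \ref{lem-2.4}\eqref{lem-2.4-2}, the set $\{v,\alpha_r,\alpha_t\}$ is a basis of $U$ on which $(rt)^{m_{rt}}$ visibly acts as the identity. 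Your replacement — that $1 \notin \operatorname{spec}(rt|_M)$ because $rt$ acts on $\rho_{k_{rt}}$ as a rotation with eigenvalues $e^{\pm 2k_{rt}\uppi i/m_{rt}}$ (or $-\Id$ when $k_{rt} = \tfrac{m_{rt}}{2}$), so the eigenvalue $1$ coming from the trivial quotient $P_u/M$ has algebraic multiplicity one and contributes an eigenvector outside $M$, forcing $rt|_{P_u}$ to be diagonalizable with $m_{rt}$-th-root-of-unity eigenvalues — is sound and logically close to the paper's step: the paper's common fixed vector of $r$ and $t$ is in particular an $rt$-fixed vector complementary to $M$, which is exactly what your multiplicity count produces. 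The difference in payoff comes afterward: the paper recycles its construction, taking one fixed vector $v_s$ for each $s \ne r$ and showing these are linearly independent and span the reflection hyperplane $H_r$, which is how it verifies that $r$ acts by a reflection; your final step is cleaner, since $r$ is an involution (hence semisimple) acting trivially on $V/\mathbb{C}\alpha_r$, so its $(-1)$-eigenspace is forced to be exactly $\mathbb{C}\alpha_r$ and its $(+1)$-eigenspace is automatically a hyperplane, with no explicit fixed vectors needed. Both routes rely on the base field in the same way (diagonalizability of finite-order operators over $\mathbb{C}$), so neither is more general; yours is more computational in the braid-relation step and more structural in the reflection step.
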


\begin{proof}
  First of all, we need to verify that \eqref{eq-IR-action} is a well defined $W$-action on $V$, i.e., $r^2 \cdot \alpha_s = (rt)^{m_{rt}} \cdot \alpha_s = \alpha_s$, $\forall r,t,s \in S$.
  By direct computations, we know that $r^2 \cdot \alpha_s = \alpha_s$ whether $r = s$ or not.

  Now suppose $r \ne t$, we consider the action of $(rt)^{m_{rt}}$.
  On the subspace $\mathbb{C} \langle \alpha_r, \alpha_t \rangle$,  $(rt)^{m_{rt}}$ is the identity map since $\mathbb{C} \langle \alpha_r, \alpha_t \rangle$ forms a representation $\rho_{k_{rt}}$ of $\langle r,t \rangle$.
  For $s \neq r,t$ (if it exists), we write $U := \mathbb{C} \langle \alpha_s, \alpha_r, \alpha_t \rangle$.
  Then $\dim U = 3$.
  By Lemma \ref{lem-2.4}\eqref{lem-2.4-1}, there are nonzero vectors $v_1 \in \mathbb{C} \langle \alpha_r, \alpha_s \rangle$, $v_2 \in \mathbb{C} \langle \alpha_r, \alpha_t \rangle$ such that $r \cdot v_1 = v_1$ and $r \cdot v_2 = v_2$.
  If we write $v_1 = c_1 \alpha_r + d_1 \alpha_s$, $v_2 = c_2 \alpha_r + d_2 \alpha_t$ where $c_1, d_1, c_2, d_2 \in \mathbb{C}$, then $d_1, d_2 \ne 0$.
  Therefore, $\dim \mathbb{C} \langle v_1, v_2 \rangle = 2$.
  Similarly, there are $v_3, v_4 \in U$ such that $\dim \mathbb{C} \langle v_3, v_4 \rangle = 2$ and $t \cdot v_3 = v_3$, $t \cdot v_4 = v_4$.
  Thus, there exists a nonzero vector $v \in \mathbb{C} \langle v_1, v_2 \rangle \cap \mathbb{C} \langle v_3, v_4 \rangle \subseteq U$.
  We have $r \cdot v = t \cdot v = v$.
  By Lemma \ref{lem-2.4}\eqref{lem-2.4-2}, $v \notin \mathbb{C} \langle \alpha_r, \alpha_t \rangle$.
  As a result, $\{v, \alpha_r, \alpha_t \}$ is a basis of $U$.
  It follows that $(rt)^{m_{rt}}$ acts by identity on $U$, and $(rt)^{m_{rt}} \cdot \alpha_s = \alpha_s$ since $\alpha_s \in U$.
  To conclude, Equation \eqref{eq-IR-action} gives rise to a $W$-representation structure on $V$.

  From the arguments above, we see that for any fixed $r \in S$ and any $s \in S \setminus \{r\}$, there is a vector $v_s = c_s \alpha_r + d_s \alpha_s \in V$ ($c_s, d_s \in \mathbb{C}$) such that $r \cdot v_s = v_s$ and $d_s \ne 0$.
  Hence, the vectors $\{v_s |s \in S \setminus \{r\}\}$ are linearly independent.
  They span a hyperplane $H_r$ with $r|_{H_r} = \Id_{H_r}$.
  Thus $r$ acts on $V$ by a reflection with reflection vector $\alpha_r$.
  By definition, $V$ is a generalized geometric representation of $W$.
\end{proof}

To conclude, any GGR-datum $(k_{rt}, a_r^t)_{r,t \in S, r \neq t}$ of $W$ defines a generalized geometric representation via Equation \eqref{eq-IR-action}, and any  generalized geometric representation of $W$ can be defined by some GGR-datum in this way.

\subsection{The associated graph \texorpdfstring{$\widetilde{G}$}{tilde G} and the character \texorpdfstring{$\chi$}{chi}} \label{subsec-tldG}

We need to clarify in what cases two different GGR-data  define isomorphic representations.
Before that, we shall associate a graph $\widetilde{G}$ and a character $\chi$ of $H_1(\widetilde{G})$ with an GGR-datum $(k_{rt}, a_r^t)_{r,t \in S, r \neq t}$ in this subsection.

For $r,t \in S$ with $r \ne t$, define
\begin{equation*}
  \widetilde{m}_{rt} := \frac{m_{rt}}{d_{rt}}, \text{ where } d_{rt} := \gcd (m_{rt}, k_{rt}).
\end{equation*}
Then $\widetilde{m}_{rt} \ge 2$.
Moreover, set $\widetilde{m}_{ss} := 1$, $\forall s \in S$.
Then $(\widetilde{m}_{rt})_{r,t \in S}$ is a Coxeter matrix.
It defines a new Coxeter group $(\widetilde{W}, S)$ with the same generator set $S$.
Denote by $\widetilde{G} = (S, E)$ the Coxeter graph of $(\widetilde{W}, S)$.
Then \[E = \Bigl\{\{r,t\} \Bigm| r,t \in S, k_{rt} \ne \frac{m_{rt}}{2}\Bigr\}.\]
We call $\widetilde{G}$  the \emph{associated graph} of the datum $(k_{rt}, a_r^t)_{r,t \in S, r \neq t}$.
Since $\widetilde{G}$ only depends on the numbers $(k_{rt})_{r,t \in S, r \neq t}$, we also say $\widetilde{G}$ is the associated graph of $(k_{rt})_{r,t \in S, r \neq t}$.

\begin{remark} \leavevmode
  \begin{enumerate}
    \item Forgetting the labels on the edges, the graph $\widetilde{G}$ is a subgraph of the Coxeter graph $G$ of $W$ with the same vertex set $S$.
        An edge $\{r,t\}$ in $G$ is an edge in $\widetilde{G}$ if and only if $k_{rt} \ne \frac{m_{rt}}{2}$.
    \item It may happen that $G$ is connected but $\widetilde{G}$ has several components.
    \item We have a canonical homomorphism $W \twoheadrightarrow \widetilde{W}$.
    \item Let $\rho: W \to GL(V)$ be the generalized geometric representation of $W$ defined by the datum
        $(k_{rt}, a_r^t)_{r,t \in S, r \neq t}$.
        By Lemma \ref{lem-2.6}, the representation $\rho$ naturally factors through $\widetilde{W}$ as follows,
        \begin{equation*}
           \xymatrix{W \ar[rr]^{\rho} \ar@{->>}[rd]& & GL(V) \\
             & \widetilde{W} \ar[ur]_{\widetilde{\rho}} &}
        \end{equation*}
        Moreover, $\widetilde{\rho}: \widetilde{W} \to GL(V)$ is a generalized geometric representation of $\widetilde{W}$ defined by the GGR-datum $(\widetilde{k}_{rt}, a_r^t)_{r,t \in S, r \neq t}$ of $\widetilde{W}$, where $\widetilde{k}_{rt} := \frac{k_{rt}}{d_{rt}}$.
  \end{enumerate}
\end{remark}

The chain group $C_1(\widetilde{G})$ is a free abelian group generated by edges $(r,t)$ (with $k_{rt} \ne \frac{m_{rt}}{2}$) endowed with an orientation (see Subsection \ref{subsec-graph}).
We define a one dimensional representation of $C_1(\widetilde{G})$ by
\begin{equation} \label{eq-4.3}
    (r,t) \mapsto \frac{a_t^r}{a_r^t}.
\end{equation}
We denote its restriction to the subgroup $H_1 (\widetilde{G})$ by $\chi: H_1 (\widetilde{G}) \to \mathbb{C}^\times$, and call $\chi$ the \emph{associated character} of the datum $(k_{rt}, a_r^t)_{r,t \in S, r \neq t}$.

\begin{example} \label{eg-chi}
  Suppose the Coxeter graph $G$ of the Coxeter group $(W,S)$ is as follows,
  \begin{equation*}
  \begin{tikzpicture}
    \node [circle, draw, inner sep=2pt, label=90:$s$] (s) at (0,0.7) {};
    \node [circle, draw, inner sep=2pt, label=270:$t$] (t) at (0,-0.7) {};
    \node [circle, draw, inner sep=2pt, label=right:$r$] (r) at (2,0) {};
    \node [circle, draw, inner sep=2pt, label=left:$u$] (u) at (-2,0) {};
    \draw (s) -- (t) -- (r) -- (s) -- (u) -- (t);
    \node (tr) at (1, -0.6) {$6$};
    \node (sr) at (1,0.6) {$5$};
    \node (tu) at (-1,-0.6) {$4$};
  \end{tikzpicture}
  \end{equation*}
  Let $k_{su} = k_{st} = 1$ and $k_{sr} = k_{tr} = k_{tu} = 2$.
  Then the associated graph $\widetilde{G}$ is as follows,
  \begin{equation*}
  \begin{tikzpicture}
    \node [circle, draw, inner sep=2pt, label=90:$s$] (s) at (0,0.7) {};
    \node [circle, draw, inner sep=2pt, label=270:$t$] (t) at (0,-0.7) {};
    \node [circle, draw, inner sep=2pt, label=right:$r$] (r) at (2,0) {};
    \node [circle, draw, inner sep=2pt, label=left:$u$] (u) at (-2,0) {};
    \draw (s) -- (t) -- (r) -- (s) -- (u);
    \node (sr) at (1,0.6) {$5$};
  \end{tikzpicture}
  \end{equation*}
  The homology group $H_1(\widetilde{G})$ is isomorphic to $\mathbb{Z}$, generated by the circuit $(s,r,t,s)$.
  Let $a_t^r = x \in \mathbb{C}^\times$ and $a_r^t = a_s^t = a_t^s = a_r^s = a_s^r = 1$.
  Then the associated character $\chi$ takes the value $x$ on the circuit $(s,r,t,s)$.
\end{example}

\subsection{Isomorphism classes}

Suppose we are given two GGR-data $(k_{rt}, a_r^t)_{r,t \in S, r \neq t}$ and $(l_{rt}, b_r^t)_{r,t \in S, r \neq t}$  of $W$.
Denote by $V_1 = \bigoplus_{s \in S} \mathbb{C} \alpha_s$, $V_2 = \bigoplus_{s \in S} \mathbb{C} \alpha_s^\prime$ the two generalized geometric representations of $W$ defined by the two data respectively, with reflection vectors $\{\alpha_s\}_{s \in S}$ and $\{\alpha^\prime_s\}_{s \in S}$.
By Lemma \ref{lem4.2}, we have the following fact.

\begin{lemma} \label{lem-k=l}
  Suppose $\varphi: V_1 \xrightarrow{\sim} V_2$ is an isomorphism of representations.
  Then $k_{rt} = l_{rt}$ for any $r, t \in S$ with $r \ne t$.
\end{lemma}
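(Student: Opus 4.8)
The plan is to reduce the claim to the non-isomorphism of distinct dihedral representations. Fix $r,t \in S$ with $r \ne t$, and restrict attention to the dihedral subgroup $\langle r, t\rangle$. By Lemma \ref{lem4.2}, the subspace $\mathbb{C}\langle \alpha_r, \alpha_t\rangle \subseteq V_1$ is $\langle r,t\rangle$-stable and isomorphic to $\rho_{k_{rt}}$, while $\mathbb{C}\langle \alpha_r^\prime, \alpha_t^\prime\rangle \subseteq V_2$ is $\langle r,t\rangle$-stable and isomorphic to $\rho_{l_{rt}}$. Since $\varphi$ is $W$-equivariant it is in particular $\langle r,t\rangle$-equivariant, so the strategy is to show that $\varphi$ carries the first of these subspaces isomorphically onto the second, and then to invoke the classification of dihedral representations.

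The key step is to locate the images $\varphi(\alpha_r)$ and $\varphi(\alpha_t)$. Because $r$ acts on $V_1$ by a reflection with reflection vector $\alpha_r$, the $-1$-eigenspace of $r$ in $V_1$ is exactly the line $\mathbb{C}\alpha_r$; likewise the $-1$-eigenspace of $r$ in $V_2$ is $\mathbb{C}\alpha_r^\prime$. As $\varphi$ commutes with the action of $r$, it maps the $-1$-eigenspace of $r$ in $V_1$ into that of $r$ in $V_2$, whence $\varphi(\alpha_r) = \lambda_r \alpha_r^\prime$ for some $\lambda_r \in \mathbb{C}^\times$. The same argument applied to $t$ gives $\varphi(\alpha_t) = \lambda_t \alpha_t^\prime$ for some $\lambda_t \in \mathbb{C}^\times$. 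Consequently $\varphi$ restricts to a linear isomorphism $\mathbb{C}\langle \alpha_r, \alpha_t\rangle \xrightarrow{\sim} \mathbb{C}\langle \alpha_r^\prime, \alpha_t^\prime\rangle$ that is $\langle r,t\rangle$-equivariant, i.e. an isomorphism $\rho_{k_{rt}} \simeq \rho_{l_{rt}}$ of representations of the dihedral group $\langle r,t \rangle$.

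Finally I would conclude from Lemma \ref{lem-Dm-rep}. The representations $\rho_k$ for $1 \le k \le \frac{m_{rt}}{2}$ are pairwise non-isomorphic: for $k < \frac{m_{rt}}{2}$ they are the distinct irreducibles listed there, and (when $m_{rt}$ is even) $\rho_{m_{rt}/2} \simeq \varepsilon_r \oplus \varepsilon_t$ is the unique one among them that decomposes. Hence $\rho_{k_{rt}} \simeq \rho_{l_{rt}}$ forces $k_{rt} = l_{rt}$. Since $r,t$ were arbitrary, this proves the lemma. The only point requiring genuine care is the eigenspace identification in the second paragraph, which pins down $\varphi(\alpha_r) \in \mathbb{C}\alpha_r^\prime$; everything else is immediate from the results already established.
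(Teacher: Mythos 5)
Your proof is correct and takes essentially the same route as the paper's: the paper likewise notes that $\varphi(\alpha_s)$ must be a multiple of $\alpha_s^\prime$ (your $-1$-eigenspace argument is exactly the justification behind its ``clearly''), restricts $\varphi$ to $\mathbb{C}\langle \alpha_r, \alpha_t\rangle$ to get an isomorphism $\rho_{k_{rt}} \simeq \rho_{l_{rt}}$ of $\langle r,t\rangle$-representations, and concludes from the pairwise non-isomorphism of the $\rho_k$. Your write-up is in fact slightly more detailed than the paper's, spelling out both the eigenspace identification and why $\rho_{m_{rt}/2}$ cannot be confused with the irreducible $\rho_k$ for $k < \frac{m_{rt}}{2}$.
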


\begin{proof}
Clearly, $\varphi(\alpha_s)$ is a multiple of $\alpha_s^\prime$ for any $s \in S$.
Thus 
\[\varphi(\mathbb{C} \langle \alpha_r, \alpha_t \rangle) = \mathbb{C} \langle \alpha_r^\prime, \alpha_t^\prime \rangle\] 
for any $r,t \in S$ with $r \ne t$.
The restriction $\varphi|_{\mathbb{C} \langle \alpha_r, \alpha_t \rangle}$ is an isomorphism of $\langle r,t \rangle$-representations between $\mathbb{C} \langle \alpha_r, \alpha_t \rangle$ and $\mathbb{C} \langle \alpha_r^\prime, \alpha_t^\prime \rangle$.
Therefore $\rho_{k_{rt}} = \rho_{l_{rt}}$, and then $k_{rt} = l_{rt}$.
\end{proof}

Suppose now $V_1, V_2$ are isomorphic. 
Then $k_{rt} = l_{rt}$ by Lemma \ref{lem-k=l}.
Thus the two data have the same associated graph $\widetilde{G}$.
Denote by $\chi_1$ and $\chi_2$ the characters of $H_1(\widetilde{G})$ associated with $(k_{rt}, a_r^t)_{r,t \in S, r \neq t}$ and $(k_{rt}, b_r^t)_{r,t \in S, r \neq t}$ respectively (see Subsection \ref{subsec-tldG}).

\begin{lemma}\label{lem-chi1=2}
  Under the assumption $\varphi: V_1 \xrightarrow{\sim} V_2$, it holds that $\chi_1 = \chi_2$.
\end{lemma}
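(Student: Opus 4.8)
The plan is to track how $\varphi$ acts on the reflection vectors and then recognize the discrepancy between the two data as a ``coboundary'' that dies on cycles. First, exactly as at the beginning of the proof of Lemma~\ref{lem-k=l}, each $\varphi(\alpha_s)$ is a nonzero scalar multiple of $\alpha_s^\prime$, so I may write $\varphi(\alpha_s) = \lambda_s \alpha_s^\prime$ with $\lambda_s \in \mathbb{C}^\times$ for every $s \in S$. These scalars $\lambda_s$ are the only new data the isomorphism introduces, and the whole argument amounts to showing they cancel around every circuit.

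Next I would push the defining action \eqref{eq-IR-action} through $\varphi$. Fix an edge $\{r,t\}$ of $\widetilde{G}$, so that $k_{rt} < \frac{m_{rt}}{2}$ and hence $\cos\frac{k_{rt}\uppi}{m_{rt}} \ne 0$. Applying $\varphi$ to $r \cdot \alpha_t$ and using $W$-equivariance gives $\lambda_t (r \cdot \alpha_t^\prime)$; expanding both this and $\varphi\bigl(\alpha_t + 2\frac{a_r^t}{a_t^r}\cos\frac{k_{rt}\uppi}{m_{rt}}\alpha_r\bigr)$ in the basis $\{\alpha_s^\prime\}$ via \eqref{eq-IR-action} for $V_2$, and comparing the coefficient of $\alpha_r^\prime$ (legitimately, since the common factor $2\cos\frac{k_{rt}\uppi}{m_{rt}}$ is nonzero), yields
\[
  \lambda_t \, \frac{b_r^t}{b_t^r} = \lambda_r \, \frac{a_r^t}{a_t^r}.
\]
In terms of the character defined by \eqref{eq-4.3}, this says precisely that $\chi_2\bigl((r,t)\bigr) = \frac{\lambda_t}{\lambda_r}\,\chi_1\bigl((r,t)\bigr)$ on each oriented edge of $\widetilde{G}$.

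The final step is to observe that the correction factor $(r,t) \mapsto \frac{\lambda_t}{\lambda_r}$ telescopes on any circuit. Regarding a circuit $c_\fe = (s_1, s_2, \dots, s_n, s_1)$ as the element $(s_1,s_2) + \dots + (s_n, s_1)$ of $C_1(\widetilde{G})$ and using that $\chi_1, \chi_2$ are multiplicative, the values $\chi_1(c_\fe)$ and $\chi_2(c_\fe)$ differ by $\prod_{i} \frac{\lambda_{s_{i+1}}}{\lambda_{s_i}}$ (cyclic indices, $s_{n+1} = s_1$), which equals $1$. Hence $\chi_1$ and $\chi_2$ agree on every circuit, and since by Lemma~\ref{lem-2.12} the circuits $\{c_\fe \mid \fe \in E \setminus E_0\}$ generate $H_1(\widetilde{G})$, I conclude $\chi_1 = \chi_2$.

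I expect the only real care to lie in the bookkeeping of the super- and subscripts $a_r^t$ versus $a_t^r$ together with the orientation conventions, and in noting that every edge occurring in a circuit lies in $\widetilde{G}$ (so that no vanishing cosine ever obstructs the coefficient comparison); the telescoping is the conceptual heart and is immediate once the per-edge identity is established.
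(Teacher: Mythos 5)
Your proof is correct and follows essentially the same route as the paper: both reduce the claim to the per-edge relation $\chi_2\bigl((r,t)\bigr) = \frac{\lambda_t}{\lambda_r}\,\chi_1\bigl((r,t)\bigr)$ for the scalars $\varphi(\alpha_s) = \lambda_s \alpha_s^\prime$ and then telescope multiplicatively around circuits, which suffices because $H_1(\widetilde{G})$ is generated by circuits (Lemma \ref{lem-2.12}). The only difference is cosmetic: where the paper derives the per-edge relation by applying Schur's lemma (Lemma \ref{lem-schur}) to the automorphism $\psi_2^{-1}\varphi\psi_1$ of $\rho_{k_{rt}}$, you obtain it by directly comparing the coefficient of $\alpha_r^\prime$ in $\varphi(r \cdot \alpha_t) = r \cdot \varphi(\alpha_t)$, which is legitimate for exactly the same reason Schur applies in the paper --- on an edge of $\widetilde{G}$ one has $k_{rt} < \frac{m_{rt}}{2}$, hence $\cos\frac{k_{rt}\uppi}{m_{rt}} \ne 0$ (equivalently, the dihedral subrepresentation is irreducible).
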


\begin{proof}
Let $(s_1, s_2, \dots, s_n, s_1)$ be a circuit in $\widetilde{G}$.
We write $\alpha_i := \alpha_{s_i}$ and $a_i^j := a_{s_i}^{s_j}$ for convenience, and similar for $m_{ij}, \widetilde{m}_{ij}, k_{ij},$ $\alpha_i^\prime, b_i^j$, etc.

Since $(s_1, s_2)$ is an edge of $\widetilde{G}$, we have $k_{12} \neq \frac{m_{12}}{2}$.
Thus $\mathbb{C} \langle \alpha_1, \alpha_2 \rangle$ and $\mathbb{C} \langle \alpha_1^\prime,  \alpha_2^\prime \rangle$ are irreducible representations of the subgroup $\langle s_1, s_2 \rangle$, both isomorphic to 
\[\rho_{k_{12}}: \langle s_1, s_2 \rangle \to GL(\mathbb{C}\beta_1 \oplus \mathbb{C} \beta_2).\]
We denote the two isomorphisms by $\psi_1, \psi_2$ respectively as follows (not a commutative diagram):
\begin{equation*}
  \xymatrix{ & \mathbb{C} \langle \beta_1, \beta_2 \rangle \ar[rd]_\sim^{\psi_2} \ar[ld]^\sim_{\psi_1} & \\
  \mathbb{C} \langle \alpha_1, \alpha_2 \rangle \ar[rr]^{\varphi|_{\mathbb{C} \langle \alpha_1, \alpha_2 \rangle}}_\sim & & \mathbb{C} \langle \alpha_1^\prime,  \alpha_2^\prime \rangle}
\end{equation*}
As in Subsection \ref{subsec-struIR}, the two maps $\psi_1, \psi_2$ can be chosen so that
\begin{equation}   \label{eq-chi1=2-3}
  \begin{aligned}
     \psi_1(\beta_1) & = a_1^2 \alpha_1, \quad & \psi_1(\beta_2) & = a_2^1 \alpha_2, \\
     \psi_2(\beta_1) & =  b_1^2 \alpha^\prime_1, & \psi_2(\beta_2) & = b_2^1 \alpha^\prime_2.
  \end{aligned}
\end{equation}

Since $\alpha_1, \alpha_1^\prime$ are reflection vectors of $s_1$ in $V_1, V_2$ respectively, there exists  some $x \in \mathbb{C}^\times$ such that $\varphi(\alpha_1) = x \alpha_1^\prime$.
Then
\begin{equation}   \label{eq-chi1=2-4}
  \psi_2^{-1} \varphi \psi_1 (\beta_1) = \psi_2^{-1} \varphi (a_1^2 \alpha_1) = \psi_2^{-1} (x a_1^2 \alpha_1^\prime) = x \frac{a_1^2}{b_1^2} \beta_1.
\end{equation}
Note that $\psi_2^{-1} \varphi \psi_1$ is an automorphism of $\rho_{k_{12}}$.
By Schur's Lemma \ref{lem-schur}, Equation \eqref{eq-chi1=2-4} implies
\begin{equation}   \label{eq-chi1=2-5}
  \psi_2^{-1} \varphi \psi_1 (\beta_2) = x \frac{a_1^2}{b_1^2} \beta_2.
\end{equation}
Therefore, by Equations \eqref{eq-chi1=2-3} and \eqref{eq-chi1=2-5}, we have
\begin{equation*}
  \varphi(\alpha_2) = \frac{1}{a_2^1} \varphi \psi_1(\beta_2) = \frac{1}{a_2^1} \cdot x \frac{a_1^2}{b_1^2} \psi_2(\beta_2) = x \frac{a_1^2 b_2^1}{a_2^1 b_1^2} \alpha_2^\prime.
\end{equation*}

In simple terms, for the edge $(s_1, s_2)$ we have shown that
\begin{equation*}
  \varphi(\alpha_1) = x \alpha_1^\prime \xLongrightarrow{\text{ Schur's lemma }} \varphi(\alpha_2) = x \frac{a_1^2 b_2^1}{a_2^1 b_1^2} \alpha_2^\prime.
\end{equation*}
Do this argument for the edges $(s_2, s_3), \dots, (s_{n-1}, s_n), (s_n, s_1)$ recursively, and finally we get
\begin{equation}   \label{eq-chi1=2-1}
    \varphi(\alpha_1) = x \frac{a_1^2 a_2^3 \cdots a_{n-1}^n a_n^1 b_2^1 b_3^2 \cdots b_n^{n-1} b_1^n}{a_2^1 a_3^2 \cdots a_n^{n-1} a_1^n b_1^2 b_2^3 \cdots b_{n-1}^n b_n^1} \alpha_1^\prime.
\end{equation}
Compare Equation \eqref{eq-chi1=2-1} with $\varphi(\alpha_1) = x \alpha_1^\prime$, then we have
\begin{equation}   \label{eq-chi1=2-2}
    \frac{a_2^1 a_3^2 \cdots a_n^{n-1} a_1^n}{a_1^2 a_2^3 \cdots a_{n-1}^n a_n^1} = \frac{b_2^1 b_3^2 \cdots b_n^{n-1} b_1^n}{b_1^2 b_2^3 \cdots b_{n-1}^n b_n^1}.
\end{equation}
The two sides of Equation \eqref{eq-chi1=2-2} are the images of the arbitrarily chosen circuit $(s_1, s_2, \dots, s_n, s_1)$ under $\chi_1$ and $\chi_2$ respectively.
It follows that $\chi_1 = \chi_2$.
\end{proof}

By Lemma \ref{lem-chi1=2}, once the numbers $(k_{rt})_{r,t \in S, r \ne t}$ are chosen and fixed, there is a well defined map $\Theta = \Theta((k_{rt})_{r,t \in S, r \ne t})$ from the set
\begin{equation*}
  \Biggl\{ \begin{gathered}
            \text{the isomorphism classes of generalized geometric}   \\
             \text{representations defined by } (k_{rt}, a_r^t)_{r,t \in S, r \ne t}
          \end{gathered}
   \Biggm| a_r^t \in \mathbb{C}^\times, \forall r,t \in S \Biggr\}
\end{equation*}
to the set $$\{\text{characters of } H_1(\widetilde{G})\},$$ where $\widetilde{G}$ is the associated graph of $(k_{rt})_{r,t \in S, r \ne t}$.

\begin{lemma} \label{lem-surj}
  For any fixed numbers $(k_{rt})_{r,t \in S, r \ne t}$, the map $\Theta$ is surjective.
\end{lemma}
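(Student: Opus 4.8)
The plan is to reduce the problem to prescribing $\chi$ on a $\mathbb{Z}$-basis of $H_1(\widetilde{G})$ and then to realize any such prescription by an explicit choice of the parameters $a_r^t$. First I would fix a spanning forest $G_0 = (S, E_0)$ of the associated graph $\widetilde{G} = (S,E)$ and endow every edge with an orientation. By Lemma \ref{lem-2.12}, the group $H_1(\widetilde{G})$ is free abelian with basis $\{c_\fe \mid \fe \in E \setminus E_0\}$, where $c_\fe$ is the unique circuit in $(S, E_0 \cup \{\fe\})$ that closes up the non-tree edge $\fe$ through the forest. Consequently a character $\chi \colon H_1(\widetilde{G}) \to \mathbb{C}^\times$ is uniquely determined by, and may take arbitrary values in $\mathbb{C}^\times$ on, the basis elements $c_\fe$. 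This freeness is what will make surjectivity possible.

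Given a target character $\chi$, I would construct a GGR-datum $(k_{rt}, a_r^t)_{r,t \in S, r \ne t}$ with the prescribed $(k_{rt})$ as follows. For every ordered pair $(r,t)$ with $r \ne t$ for which $\{r,t\}$ is \emph{not} a non-tree edge (that is, either $\{r,t\} \notin E$ or $\{r,t\} \in E_0$), set $a_r^t = a_t^r = 1$. For each non-tree edge $\fe = (s_1,s_2) \in E \setminus E_0$, with its chosen orientation, set $a_{s_1}^{s_2} = 1$ and $a_{s_2}^{s_1} = \chi(c_\fe)$. Since $\chi(c_\fe) \in \mathbb{C}^\times$ and $1 \in \mathbb{C}^\times$, this is a legitimate GGR-datum, and by Lemma \ref{lem-defrep} it defines a generalized geometric representation $V$; let $\chi'$ denote its associated character. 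Then $[V]$ lies in the domain of $\Theta$ and $\Theta([V]) = \chi'$, so it remains only to check $\chi' = \chi$.

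By the freeness noted above it suffices to compare $\chi'$ and $\chi$ on each basis circuit $c_\fe$. Writing $c_\fe = (s_1,s_2) + (s_2,s_3) + \dots + (s_n,s_1)$ with $\fe = (s_1,s_2)$ and all remaining edges lying in $E_0$, the defining formula $(r,t) \mapsto a_t^r / a_r^t$ of Equation \eqref{eq-4.3} gives, with the cyclic convention $s_{n+1} = s_1$,
\[
  \chi'(c_\fe) = \frac{a_{s_2}^{s_1}}{a_{s_1}^{s_2}} \cdot \prod_{i=2}^{n} \frac{a_{s_{i+1}}^{s_i}}{a_{s_i}^{s_{i+1}}}.
\]
Every factor in the product equals $1$, since those edges are all forest edges for which both parameters were set to $1$; the remaining factor is $\chi(c_\fe)/1 = \chi(c_\fe)$. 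Hence $\chi'(c_\fe) = \chi(c_\fe)$ for every $\fe$, so $\chi' = \chi$ and $\Theta$ is surjective. I do not anticipate a genuine obstacle here: once Lemma \ref{lem-2.12} supplies a free basis of circuits, the only point requiring a little care is the bookkeeping of orientations, namely checking that setting each forest edge's two parameters equal to $1$ annihilates all tree contributions to every circuit, so that the value of $\chi'$ on $c_\fe$ isolates the single non-tree edge $\fe$.
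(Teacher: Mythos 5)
Your proof is correct and takes essentially the same approach as the paper's: both fix a spanning forest, invoke Lemma \ref{lem-2.12} to reduce a character to its values on the basis circuits $c_\fe$, and realize those values by setting every parameter to $1$ except one parameter on each non-tree edge (your assignment $a_{s_1}^{s_2}=1$, $a_{s_2}^{s_1}=\chi(c_\fe)$ coincides with the paper's after unwinding its orientation convention). The only difference is cosmetic: you write out the verification $\chi'(c_\fe)=\chi(c_\fe)$ explicitly, including the cancellation of forest-edge factors, where the paper asserts it in one line with a pointer to Example \ref{eg-chi}.
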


\begin{proof}
Choose a spanning forest $\widetilde{G}_0 = (S, E_0)$ of the associated graph $\widetilde{G} = (S,E)$.
For any $\mathfrak{e} \in E \setminus E_0$, choose an orientation of $\mathfrak{e}$.
Let $c_\mathfrak{e}$ be defined as in Subsection \ref{subsec-graph}.
Then by Lemma \ref{lem-2.12}, we have
\begin{equation*}
    H_1(\widetilde{G}) = \bigoplus_{\mathfrak{e} \in E \setminus E_0} \mathbb{Z} c_\mathfrak{e}.
\end{equation*}
Therefore, a character $\chi$ of $H_1(\widetilde{G})$ is determined by the values $\{\chi(c_\mathfrak{e}) \mid \mathfrak{e} \in E \setminus E_0 \}$.

Let $\chi$ be an arbitrary character of $H_1(\widetilde{G})$.
For any  $r,t \in S$ with $r \ne t$, let
\begin{equation*}
    a_r^t := \begin{cases}
              \chi(c_{(t,r)}), & \mbox{if } \{t,r\} \in E \setminus E_0, \mbox{ and is oriented as } (t,r); \\
              1, & \mbox{otherwise.} \\

            \end{cases}
\end{equation*}
Then, $\chi$ is the character associated with the datum $(k_{rt}, a_r^t)_{r,t \in S, r \ne t}$ (for an illustrative example, see Example \ref{eg-chi}).
Let $V$ be the generalized geometric representation defined by the datum $(k_{rt}, a_r^t)_{r,t \in S, r \ne t}$, then  $\Theta(V) = \chi$.
\end{proof}

\begin{lemma} \label{lem-inj}
  For any fixed numbers $(k_{rt})_{r,t \in S, r \ne t}$, the map $\Theta$ is injective.
\end{lemma}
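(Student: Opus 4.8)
The plan is to establish the converse of Lemma \ref{lem-chi1=2}: assuming the two data share the same numbers $(k_{rt})$ and induce the same character $\chi_1 = \chi_2$ on $H_1(\widetilde{G})$, I will construct an explicit $W$-isomorphism $\varphi \colon V_1 \xrightarrow{\sim} V_2$. Since $\alpha_s$ and $\alpha_s^\prime$ are reflection vectors and hence unique up to a scalar (Definition \ref{def-refl-rep}), any such $\varphi$ is forced to have the form $\varphi(\alpha_s) = c_s \alpha_s^\prime$ for scalars $c_s \in \mathbb{C}^\times$. Thus the entire problem reduces to choosing the $c_s$ so that $\varphi$ intertwines the two $W$-actions defined by Equation \eqref{eq-IR-action}.

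First I would record what equivariance demands. Imposing $\varphi(r \cdot \alpha_t) = r \cdot \varphi(\alpha_t)$ and expanding via Equation \eqref{eq-IR-action} in both $V_1$ and $V_2$, the coefficient of $\alpha_t^\prime$ matches automatically, while matching the coefficient of $\alpha_r^\prime$ gives $\frac{a_r^t}{a_t^r}\,c_r = \frac{b_r^t}{b_t^r}\,c_t$ after cancelling $2\cos\frac{k_{rt}\uppi}{m_{rt}}$. When $\{r,t\}$ is \emph{not} an edge of $\widetilde{G}$ we have $k_{rt} = \frac{m_{rt}}{2}$, so this cosine vanishes and no constraint arises; the diagonal relation $\varphi(r\cdot\alpha_r) = r\cdot\varphi(\alpha_r)$ is also automatic. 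Hence the only conditions are, for each edge $\{r,t\}$ of $\widetilde{G}$,
\[
  \frac{c_r}{c_t} = \frac{a_t^r}{a_r^t}\cdot\frac{b_r^t}{b_t^r}.
\]

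The key step is to view these edge conditions as a discrete potential problem on $\widetilde{G}$. Let $\eta \colon C_1(\widetilde{G}) \to \mathbb{C}^\times$ be the $1$-cochain with $\eta((r,t)) = \frac{a_t^r b_r^t}{a_r^t b_t^r}$; by Equation \eqref{eq-4.3} its restriction to $H_1(\widetilde{G})$ is $\chi_1\chi_2^{-1}$. I seek $c \colon S \to \mathbb{C}^\times$ whose multiplicative coboundary $(r,t) \mapsto c_r/c_t$ equals $\eta$ on every edge. Such a $c$ exists if and only if $\eta$ is trivial on $H_1(\widetilde{G})$, i.e. the product of $\eta$-values around every circuit is $1$. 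By Lemma \ref{lem-2.12}, $H_1(\widetilde{G})$ is generated by the circuits $c_\fe$ attached to a spanning forest, and $\eta(c_\fe) = (\chi_1\chi_2^{-1})(c_\fe) = 1$ precisely because $\chi_1 = \chi_2$. Concretely, I would fix a spanning forest, set $c_s = 1$ at one root per connected component (and on each isolated vertex), propagate $c$ along the tree edges via $c_r = \eta((r,t))\,c_t$, and then observe that every non-tree edge equation holds automatically thanks to $\eta(c_\fe) = 1$.

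Finally I would verify that $\varphi(\alpha_s) = c_s \alpha_s^\prime$ is a genuine $W$-isomorphism: it is bijective since each $c_s \ne 0$, and it commutes with every generator of $W$ by construction, hence with all of $W$. This yields $V_1 \cong V_2$ and proves that $\Theta$ is injective; together with Lemma \ref{lem-surj}, $\Theta$ is then a bijection. The main obstacle is the solvability of the potential problem in the third paragraph, whose resolution is exactly the content of $\chi_1 = \chi_2$ and is the mirror image of the cycle computation carried out in Lemma \ref{lem-chi1=2}. Everything else is routine bookkeeping.
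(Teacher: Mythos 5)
Your proof is correct, and its mathematical core is the same as the paper's: both reduce the problem to finding scalars $c_s \in \mathbb{C}^\times$ with $\varphi(\alpha_s) = c_s \alpha_s^\prime$ subject to the edge relations $c_r/c_t = \frac{a_t^r b_r^t}{a_r^t b_t^r}$ on $\widetilde{G}$, whose solvability is exactly the triviality of $\chi_1\chi_2^{-1}$ on $H_1(\widetilde{G})$. The organization differs in two respects. First, the paper defines $\varphi$ directly by the path-product formula \eqref{eq-lem-inj-3} from a base point in each component, proves path-independence by evaluating both characters on the closed path formed by two competing paths, and only afterwards verifies equivariance by an explicit computation; you instead derive the edge constraints \emph{a priori} from equivariance (correctly observing that non-edges of $\widetilde{G}$ impose no condition since $\cos\frac{k_{rt}\uppi}{m_{rt}} = 0$ there, and that the diagonal condition is automatic), so your final verification is immediate by construction. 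Second, you solve the constraint system by propagating along a fixed spanning forest and invoking Lemma \ref{lem-2.12}: the non-tree edge equation for $\fe$ follows by telescoping the tree-edge coboundary values around the circuit $c_\fe$ and using $\eta(c_\fe) = (\chi_1\chi_2^{-1})(c_\fe) = 1$, whereas the paper checks consistency against arbitrary closed paths and reserves the spanning-forest machinery for the surjectivity Lemma \ref{lem-surj}. Your cochain/coboundary framing makes the role of $H_1(\widetilde{G})$ conceptually transparent and isolates the only nontrivial step as a standard potential problem; the paper's explicit formula \eqref{eq-lem-inj-3} has the side benefit of being reusable later (cf.\ Remark \ref{rmk-322} and Corollary \ref{cor-end-IR}). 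Both arguments are complete, and your closing remark that checking intertwining on the generators $s \in S$ and the basis vectors suffices (since $S$ generates $W$) closes the one point that might otherwise be left implicit.
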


\begin{proof}
Suppose $\Theta (V_1) = \Theta (V_2)$, where $V_1 = \bigoplus_{s \in S} \mathbb{C} \alpha_s$ and $V_2 = \bigoplus_{s \in S} \mathbb{C} \alpha_s^\prime$ are generalized geometric representations of $W$ defined by two GGR-data, say, $(k_{rt}, a_r^t)_{r,t\in S, r \ne t}$ and $(k_{rt}, b_r^t)_{r,t\in S, r \ne t}$, respectively.
We need to find an isomorphism $\varphi: V_1 \to V_2$.

It is possible that the associated graph $\widetilde{G}$ have several connected components.
In an arbitrary component of $\widetilde{G}$, we choose and fix a vertex $r$ as a base point, and define
\begin{equation*}
  \varphi(\alpha_r) := \alpha_r^\prime.
\end{equation*}
For any other vertex $s$ in this component, choose a path $(s_1 = r, s_2, \dots, s_n = s)$ connecting $r$ and $s$.
We use the notations $\alpha_i, a_i^j$, etc. as in the proof of Lemma \ref{lem-chi1=2}.
Then we define
\begin{equation}   \label{eq-lem-inj-3}
  \varphi(\alpha_s) = \varphi(\alpha_n) := \frac{a_1^2 a_2^3 \cdots a_{n-1}^n b_2^1 b_3^2 \cdots b_n^{n-1}}{a_2^1 a_3^2 \cdots a_n^{n-1} b_1^2 b_2^3 \cdots b_{n-1}^n} \alpha_n^\prime.
\end{equation}
We shall show that $\varphi(\alpha_n)$ is independent of the choice of the path (but $\varphi$ does depend on the choice of the base point $r$ in each component).
Suppose there is another path connecting $s_1 = r$ and $s_n = s$, say $(s_1, s_p, s_{p-1}, \dots, s_{n+1}, s_n)$, $p \ge n$.
The two paths connected end-to-end form a closed path, i.e.,  $$(s_1, \dots s_n, s_{n+1}, \dots, s_p, s_1).$$
This closed path is an element of $H_1(\widetilde{G})$.
Since $\Theta(V_1) = \Theta(V_2)$, the two characters $\Theta(V_1)$ and $\Theta(V_2)$ takes the same value on this closed path.
In other words, we have the following equality,
\begin{equation}   \label{eq-lem-inj-1}
    \frac{a_2^1 \cdots a_n^{n-1} a_{n+1}^n \cdots a_p^{p-1} a_1^p}{a_1^2 \cdots a_{n-1}^n a_{n}^{n+1} \cdots a_{p-1}^p a_p^1} = \frac{b_2^1 \cdots b_n^{n-1} b_{n+1}^n \cdots b_p^{p-1} b_1^p}{b_1^2 \cdots b_{n-1}^n b_{n}^{n+1} \cdots b_{p-1}^p b_p^1}.
\end{equation}
Equation \eqref{eq-lem-inj-1} is equivalent to
\begin{equation*}
    \frac{a_1^2 a_2^3 \cdots a_{n-1}^n b_2^1 b_3^2 \cdots b_n^{n-1}}{a_2^1 a_3^2 \cdots a_n^{n-1} b_1^2 b_2^3 \cdots b_{n-1}^n} = \frac{a_1^p a_p^{p-1} \cdots a_{n+1}^n b_p^1 b_{p-1}^p \cdots  b_n^{n+1}}{a_p^1 a_{p-1}^p \cdots a_n^{n+1} b_1^p b_p^{p-1} \cdots  b_{n+1}^n}.
\end{equation*}
This indicates that $\varphi(\alpha_n)$ is independent of the choice of the path.
Now, we obtain a linear map $\varphi: V_1 \to V_2$.

Clearly, $\varphi$ is a linear isomorphism.
It remains to verify that $\varphi$ is a homomorphism of representations.
It suffices to check for any $s,t \in S$ that
\begin{equation}   \label{eq-lem-inj-2}
  \varphi(s \cdot \alpha_t) = s \cdot \varphi(\alpha_t).
\end{equation}
If $s = t$, then this is obvious by definition.
If $s$ and $t$ are distinct and not adjacent in $\widetilde{G}$, i.e., $\widetilde{m}_{st} = 2$, then $s \cdot \alpha_t = \alpha_t$ and $s \cdot \alpha_t^\prime = \alpha_t^\prime$, and Equation \eqref{eq-lem-inj-2} is still valid.
Now assume $\widetilde{m}_{st} \geq 3$, then $s$ and $t$ are in the same connected component of $\widetilde{G}$.
Suppose $r$ is the base point chosen in this component, and $(r, s_2, \dots, s)$ is a path connecting $r$ and $s$.
Then $(r, s_2, \dots, s, t)$ is a path connecting $r$ and $t$.
Suppose $\varphi (\alpha_s) = x \alpha_s^\prime$, $x \in \mathbb{C}^\times$.
Then
\begin{equation*}
    \varphi (\alpha_t) = x \frac{a_s^t b_t^s}{a_t^s b_s^t} \alpha_t^\prime.
\end{equation*}
It is then straightforward to verify Equation \eqref{eq-lem-inj-2} by definitions of $V_1$ and $V_2$, as follows,
\begin{align*}
  \varphi(s \cdot \alpha_t) & = \varphi (\alpha_t + 2\frac{a_s^t}{a_t^s} \cos \frac{k_{st}\uppi}{m_{st}} \alpha_s) \\
   & = x \frac{a_s^t b_t^s}{a_t^s b_s^t} \alpha_t^\prime + 2x  \frac{a_s^t}{a_t^s} \cos \frac{k_{st}\uppi}{m_{st}} \alpha_s^\prime;  \\
  s \cdot\varphi(\alpha_t) & = x \frac{a_s^t b_t^s}{a_t^s b_s^t} (s\cdot \alpha_t^\prime) \\
  & = x \frac{a_s^t b_t^s}{a_t^s b_s^t} (\alpha_t^\prime + 2 \frac{b_s^t}{b_t^s} \cos \frac{k_{st}\uppi}{m_{st}} \alpha_s^\prime) \\
  & = x \frac{a_s^t b_t^s}{a_t^s b_s^t} \alpha_t^\prime + 2x \frac{a_s^t}{a_t^s} \cos \frac{k_{st}\uppi}{m_{st}} \alpha_s^\prime  = \varphi(s \cdot \alpha_t).
\end{align*}

Thus, $\varphi: V_1 \to V_2$ is an isomorphism of representations of $W$, and hence $\Theta$ is injective.
\end{proof}

\begin{remark} \label{rmk-322} \leavevmode 
  \begin{enumerate}
    \item \label{rmk-322-1} In fact, by Schur's lemma, Equation \eqref{eq-lem-inj-3} is the only feasible way to define $\varphi(\alpha_s)$ once $\varphi(\alpha_r) = \alpha_r^\prime$ is defined for the base point $r$.
    \item Lemma \ref{lem-inj} is essentially a converse of Lemma \ref{lem-chi1=2}.
  \end{enumerate}
\end{remark}

Now we are ready to establish our main result. 

\begin{theorem}\label{thm-IR}
  The isomorphism classes of generalized geometric representations of $W$ one-to-one correspond to the set of data
  \begin{equation*}
    \left\{ \bigl((k_{rt})_{r, t \in S, r \ne t}, \chi\bigr) \text{ } \middle\vert 
    \begin{gathered}
      \text{ $k_{rt} \in \mathbb{N}$, $1 \leq k_{rt} = k_{tr} \leq \frac{m_{rt}}{2}$, $\forall r, t \in S$, $r \ne t$;} \\
      \text{$\chi$ is a character of  $H_1(\widetilde{G}$), where} \\
      \text{$\widetilde{G}$ is the associated graph of $(k_{rt})_{r,t \in S, r \ne t}$}
    \end{gathered} \right\}.
  \end{equation*}
\end{theorem}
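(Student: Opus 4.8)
The plan is to assemble the lemmas proved in this section into a single bijection, so almost all the work is already done; what remains is bookkeeping. First I would construct the forward map. Given an isomorphism class of generalized geometric representations, pick a representative $V = \bigoplus_{s\in S} \mathbb{C}\alpha_s$. Lemma \ref{lem4.2} attaches to each pair $r \ne t$ a number $k_{rt}$ with $1 \le k_{rt} \le \frac{m_{rt}}{2}$, and the symmetry $k_{rt} = k_{tr}$ is automatic because $\mathbb{C}\langle \alpha_r, \alpha_t\rangle$ carries the action of $\langle r,t\rangle = \langle t,r\rangle$ and $\rho_k$ treats the two generators symmetrically. By Lemma \ref{lem-k=l} these numbers depend only on the isomorphism class. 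Fixing $(k_{rt})$ fixes the associated graph $\widetilde{G}$, and then $V$ determines a character $\chi$ of $H_1(\widetilde{G})$ as in Subsection \ref{subsec-tldG}; Lemma \ref{lem-chi1=2} shows that $\chi$ is likewise an invariant of the class. Thus $[V] \mapsto \bigl((k_{rt}), \chi\bigr)$ is well defined and lands in the stated set of data.

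Next I would prove bijectivity by reducing to the map $\Theta = \Theta((k_{rt}))$ introduced after Lemma \ref{lem-chi1=2}. For injectivity, if $[V_1]$ and $[V_2]$ are sent to the same datum $\bigl((k_{rt}), \chi\bigr)$, then their $(k_{rt})$-parts coincide, so both live over the same $\widetilde{G}$ with $\Theta(V_1) = \chi = \Theta(V_2)$, and Lemma \ref{lem-inj} gives $V_1 \simeq V_2$. For surjectivity, take any datum $\bigl((k_{rt}), \chi\bigr)$ in the target; the $(k_{rt})$ determine $\widetilde{G}$ and $\chi$ is a character of $H_1(\widetilde{G})$, so Lemma \ref{lem-surj} produces a generalized geometric representation $V$ defined by some GGR-datum $(k_{rt}, a_r^t)$ whose associated character equals $\chi$. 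Then $[V] \mapsto \bigl((k_{rt}), \chi\bigr)$, as required.

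The only delicate point is the well-definedness of the forward map, and that is exactly what Lemmas \ref{lem-k=l} and \ref{lem-chi1=2} secure; everything else is the formal observation that $\Theta$ is a bijection on each fixed $(k_{rt})$-fibre, packaged from Lemmas \ref{lem-surj} and \ref{lem-inj}. I therefore expect no genuine obstacle beyond correctly pairing each component of the datum with the lemma that controls it.
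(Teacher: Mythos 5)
Your proposal is correct and follows essentially the same route as the paper, whose proof is precisely the combination of Subsection \ref{subsec-struIR} (every generalized geometric representation arises from a GGR-datum and conversely, via Lemma \ref{lem-defrep}) with Lemmas \ref{lem-k=l}, \ref{lem-chi1=2}, \ref{lem-surj} and \ref{lem-inj}. Your pairing of each component of the datum with the controlling lemma --- well-definedness from Lemmas \ref{lem-k=l} and \ref{lem-chi1=2}, bijectivity of $\Theta$ on each fixed $(k_{rt})$-fibre from Lemmas \ref{lem-surj} and \ref{lem-inj} --- is exactly the intended bookkeeping.
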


\begin{proof}
  Combine Subsection \ref{subsec-struIR} and Lemmas \ref{lem-k=l}, \ref{lem-chi1=2}, \ref{lem-surj}, \ref{lem-inj}.
\end{proof}

\begin{remark}
  This classification of generalized geometric representations also works if we replace the base field $\mathbb{C}$ by  $\mathbb{R}$.
\end{remark}

\begin{example}
  If every $k_{rt}$ is chosen to be $1$ and $\chi$ is chosen to be the trivial character, then the corresponding representation is nothing but the geometric representation $\Vg$.
\end{example}

\begin{example} \label{eg-affine-A}
  The Coxeter group $(W,S)$ of type $\widetilde{\mathsf{A}}_n$ is defined by $$\langle s_0, s_1, \dots, s_n \mid s_i^2 = (s_i s_{i+1})^3 = e, \forall i = 0, \dots, n \rangle$$ (regard $n+1$ as $0$).
  Its Coxeter graph $G$ is as follows,
  \begin{equation*}
  \begin{tikzpicture}
    \node [circle, draw, inner sep=2pt, label=below:$s_1$] (s1) at (-2,0) {};
    \node [circle, draw, inner sep=2pt, label=below:$s_2$] (s2) at (-1,0) {};
    \node [circle, draw, inner sep=2pt, label=below:$s_{n-1}$] (sn-1) at (1,0) {};
    \node [circle, draw, inner sep=2pt, label=below:$s_n$] (sn) at (2,0) {};
    \node [circle, draw, inner sep=2pt, label=left:$s_0$] (s0) at (0,1) {};
    \draw (sn-1) -- (sn) -- (s0) -- (s1) -- (s2);
    \draw (s2) -- (-0.5,0);
    \draw (sn-1) -- (0.5,0);
    \node (d) at (0,0) {$\dots$};
  \end{tikzpicture}
  \end{equation*}
  We use the notation $(k_{ij}, a_i^j)_{0\le i \ne j \le n}$ to indicate the GGR-datum $(k_{s_is_j}, a_{s_i}^{s_j})_{i \ne j}$, as we did in the proof of Lemma \ref{lem-chi1=2}.
  The condition $1 \leq k_{ij} \leq \frac{m_{ij}}{2}$ forces $k_{ij} = 1$.
  Thus the associated graph $\widetilde{G}$ is the same as $G$.
  The homology group $H_1 (G)$ is isomorphic to $\mathbb{Z}$ generated by the circuit $c = (s_0, s_1, \dots, s_n, s_0)$.
  Giving a character $\chi$ of $H_1 (G)$ is equivalent to giving a number $x \in \mathbb{C}^\times$ and assigning $\chi(c) = x$.
  Such character $\chi$ can be realized by choosing $a_0^n = x$ and $a_n^0 = a_0^1 = a_1^0 =  \dots = 1$.
  Therefore, all generalized geometric representations of $W$ are parameterized by $\mathbb{C}^\times$.
\end{example}

We end this section by a corollary which can be derived from the proof of Lemma \ref{lem-inj} and Remark \ref{rmk-322}\eqref{rmk-322-1}.

\begin{corollary} \label{cor-end-IR}
  Let $V$ be a generalized geometric representation of $W$, and $\widetilde{G}$ be the associated graph.
  Let $g$ be the number of connected components of $\widetilde{G}$.
  Then $\operatorname{End}_W (V) \simeq \mathbb{C}^{\oplus g}$.
  In particular, if $g = 1$, then any endomorphism of $V$ is a scalar multiplication.
\end{corollary}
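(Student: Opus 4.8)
The plan is to compute $\operatorname{End}_W(V)$ directly by analyzing how any $W$-endomorphism $\varphi$ is forced to act on the reflection vectors $\{\alpha_s\}_{s\in S}$, leveraging the structural information already established in Subsection \ref{subsec-struIR} and in the proof of Lemma \ref{lem-inj}. First I would observe that since each $\alpha_s$ spans the $(-1)$-eigenspace of $s$ (which is one-dimensional, as $s$ acts by a reflection), any $\varphi \in \operatorname{End}_W(V)$ must send $\alpha_s$ to a scalar multiple of itself: $\varphi(\alpha_s) = x_s \alpha_s$ for some $x_s \in \mathbb{C}$. So an endomorphism is completely determined by the tuple $(x_s)_{s \in S}$, and the problem reduces to understanding which tuples are admissible.

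The next step is to extract the constraints on the $x_s$ coming from the requirement that $\varphi$ commute with the whole $W$-action, not just with each $s$ individually. Here I would invoke the computation in the proof of Lemma \ref{lem-inj} (and the uniqueness noted in Remark \ref{rmk-322}\eqref{rmk-322-1}): for an edge $\{s,t\}$ of the associated graph $\widetilde{G}$, i.e.\ when $k_{st} \ne \frac{m_{st}}{2}$, the representation $\mathbb{C}\langle \alpha_s, \alpha_t \rangle$ is an irreducible $\rho_{k_{st}}$, so Schur's Lemma \ref{lem-schur} forces $\varphi$ to act by a single scalar on this two-dimensional subspace, giving $x_s = x_t$. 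Thus the scalars $x_s$ must be constant along each edge of $\widetilde{G}$, hence constant on each connected component. Conversely, assigning an arbitrary scalar to each component and setting $\varphi(\alpha_s)$ accordingly does yield a well-defined $W$-endomorphism: on pairs $\{s,t\}$ that are non-adjacent in $\widetilde{G}$ (where $\widetilde{m}_{st}=2$ and $s\cdot\alpha_t=\alpha_t$) the commutation is automatic, and on adjacent pairs it follows because a common scalar on an isotypic piece always commutes. This identifies $\operatorname{End}_W(V)$ with the space of functions from the set of connected components of $\widetilde{G}$ to $\mathbb{C}$, which is $\mathbb{C}^{\oplus g}$ as an algebra under pointwise multiplication.

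The main subtlety I expect is verifying well-definedness of the converse construction, namely that a locally-constant choice of scalars (constant per component) genuinely extends to a $W$-equivariant map — but this is exactly the content already carried out in the proof of Lemma \ref{lem-inj}, where the consistency across different paths was shown to follow from the homology/character condition; in the present setting the two data coincide ($V_1 = V_2$, same GGR-datum), so the path-independence degenerates to the triviality $\chi = \chi$ and imposes no obstruction. I would therefore present the corollary as a streamlined specialization of Lemma \ref{lem-inj}: the $(-1)$-eigenvector rigidity pins $\varphi$ down to scalars $x_s$, Schur's lemma propagates equality along edges, and Remark \ref{rmk-322}\eqref{rmk-322-1} guarantees no further freedom within a component, while distinct components contribute independent scalars. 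The isomorphism $\operatorname{End}_W(V) \simeq \mathbb{C}^{\oplus g}$ then follows, and the case $g=1$ is the stated special instance.
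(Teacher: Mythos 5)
Your proposal is correct and takes essentially the same route the paper intends: the paper derives this corollary precisely from the proof of Lemma \ref{lem-inj} together with Remark \ref{rmk-322}\eqref{rmk-322-1}, which is exactly your mechanism --- the one-dimensional $(-1)$-eigenspace of each $s$ forces $\varphi(\alpha_s) = x_s \alpha_s$, Schur's Lemma \ref{lem-schur} propagates $x_s = x_t$ along each edge of $\widetilde{G}$ (hence constancy on components), and the converse verification on non-edges is trivial since there $\cos\frac{k_{st}\uppi}{m_{st}} = 0$. Your observation that the path-independence issue of Lemma \ref{lem-inj} degenerates when the two data coincide is also accurate, so no gap remains.
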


This recovers the well known fact that any endomorphism of the geometric representation of an irreducible Coxeter group is a scalar (see \cite[Ch. V, Exercise \S 4, (3), (a)]{Bourbaki-Lie456}).

\section{Properties of generalized geometric representations} \label{sec-property}

\subsection{Reducibility of generalized geometric representations} \label{subsec-red}

Recall \cite[Ch. V, \S 4]{Bourbaki-Lie456} that if $(W,S)$ is reducible then the geometric representation $\Vg$ decomposes canonically into a direct sum, with summands corresponding to the components of the Coxeter graph $G$.
If $(W,S)$ is irreducible, then the reducibility of $\Vg$ can be described by the following proposition.

\begin{proposition} [{\cite[Ch. V, \S 4, no. 7]{Bourbaki-Lie456}}] 
 Assume $(W,S)$ is irreducible.
 \begin{enumerate}
   \item The geometric representation $\Vg$ is irreducible if and only if the bilinear form $(-|-)$ defined in Equation \eqref{eq-B-Vg} is non-degenerate.
   \item If $\Vg$ is reducible, then it has a maximal subrepresentation $V_0$ with trivial $W$-action, and the quotient $\Vg / V_0$ is irreducible.
 \end{enumerate}
\end{proposition}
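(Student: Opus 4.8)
The plan is to study the radical of the $W$-invariant bilinear form \eqref{eq-B-Vg},
\[
V_0 := \{ x \in \Vg \mid (x \mid y) = 0 \text{ for all } y \in \Vg \},
\]
and to show that it is the unique candidate for a maximal proper subrepresentation. First I would record two elementary facts. Because the form is $W$-invariant by \eqref{eq-Vg-W-inv}, the radical $V_0$ is a $W$-subrepresentation. Because each $s$ acts as the orthogonal reflection $s \cdot v = v - 2(v \mid \alpha_s)\alpha_s$ (using $(\alpha_s \mid \alpha_s) = 1$ in \eqref{eq-Vg-action}), every $x \in V_0$ satisfies $(x \mid \alpha_s) = 0$ and is therefore fixed by each generator; hence $W$ acts trivially on $V_0$. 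Finally $V_0 \ne \Vg$ since $(\alpha_s \mid \alpha_s) = 1 \ne 0$, so $V_0$ is always a proper subspace.

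The crux is the following dichotomy, which is exactly where irreducibility enters: \emph{every $W$-invariant subspace $U \subseteq \Vg$ satisfies either $U \subseteq V_0$ or $U = \Vg$.} To prove it I would fix such a $U$ and examine each generator $s$ in turn. For $v \in U$, Lemma \ref{lem-refl} gives $v - s \cdot v \in \mathbb{C}\alpha_s$, and this vector lies in $U$ by invariance; explicitly $v - s \cdot v = 2(v \mid \alpha_s)\alpha_s$. Thus either some $v \in U$ has $(v \mid \alpha_s) \ne 0$, forcing $\alpha_s \in U$, or $(v \mid \alpha_s) = 0$ for every $v \in U$, i.e.\ $U \subseteq \alpha_s^\perp$. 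Setting $S_1 := \{ s \in S \mid \alpha_s \in U \}$, these two alternatives are mutually exclusive (since $(\alpha_s \mid \alpha_s) = 1$) and exhaustive, and $U \subseteq \alpha_t^\perp$ for each $t \notin S_1$.

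The key propagation step is that $S_1$ is closed under adjacency in the Coxeter graph $G$: if $s \in S_1$ and $m_{st} \ge 3$, then $(\alpha_s \mid \alpha_t) = -\cos\frac{\uppi}{m_{st}} \ne 0$, so $\alpha_s \in U$ witnesses $U \not\subseteq \alpha_t^\perp$, whence $t \in S_1$. Since $G$ is connected by irreducibility of $(W,S)$, either $S_1 = \varnothing$, giving $U \subseteq \bigcap_{s \in S}\alpha_s^\perp = V_0$, or $S_1 = S$, giving $U = \Vg$. This proves the dichotomy.

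With the dichotomy in hand both parts follow quickly. For (1): if the form is non-degenerate then $V_0 = 0$, so every subrepresentation is $0$ or $\Vg$, i.e.\ $\Vg$ is irreducible; conversely if the form is degenerate then $V_0$ is a nonzero proper subrepresentation, so $\Vg$ is reducible. For (2): reducibility forces degeneracy, hence $V_0 \ne 0$; the dichotomy shows every proper subrepresentation lies in $V_0$, so $V_0$ is maximal and carries the trivial action. Finally a subrepresentation of $\Vg / V_0$ pulls back to some $U$ with $V_0 \subseteq U \subseteq \Vg$, and the dichotomy forces $U = V_0$ or $U = \Vg$, so $\Vg / V_0$ is irreducible. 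I expect the propagation step — converting adjacency in $G$ into membership in $U$ via the nonvanishing of $(\alpha_s \mid \alpha_t)$ — to be the only place demanding genuine care, as it is precisely where connectivity is used.
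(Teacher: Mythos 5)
Your proof is correct. Note, however, that the paper itself gives no proof of this proposition --- it is quoted from Bourbaki --- so the relevant comparison is with the paper's proof of its generalization (Proposition \ref{prop-reducibility} together with Theorem \ref{thm-reducibility}), and there the route is genuinely different. Lacking an invariant form in general, the paper propagates membership of reflection vectors in a subrepresentation $U$ along the graph using irreducibility of the two-dimensional dihedral restrictions: if $\alpha_s \in U$ and $\{s,t\}$ is an edge of $\widetilde{G}$ (i.e.\ $k_{st} < \frac{m_{st}}{2}$), then $\mathbb{C}\langle \alpha_s, \alpha_t \rangle$ is an irreducible $\langle s,t\rangle$-module meeting $U$, forcing $\alpha_t \in U$; reducibility is then detected by singularity of the matrix $A$ of \eqref{matrixA}, whose null space is the space of $W$-fixed vectors. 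Your argument instead exploits the invariant form \eqref{eq-B-Vg}: the identity $v - s\cdot v = 2(v \mid \alpha_s)\alpha_s$ yields the dichotomy $U \subseteq V_0$ or $U = \Vg$, with propagation through connectivity of $G$ via $(\alpha_s \mid \alpha_t) = -\cos\frac{\uppi}{m_{st}} \ne 0$ whenever $m_{st} \ge 3$. For $\Vg$ the two mechanisms coincide in content: the matrix $A$ is precisely the Gram matrix of $(- \mid -)$, and the fixed subspace equals the radical, so ``$A$ singular'' is literally ``the form is degenerate''. What your form-based version buys is the equivalence in part (1) and the identification of the maximal subrepresentation as the radical in a single stroke; what the paper's version buys is an argument that survives when no invariant form exists, which by Theorem \ref{prop-bil} is the generic situation for generalized geometric representations (the form exists only when $\chi(H_1(\widetilde{G})) \subseteq \{\pm 1\}$).
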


In this subsection we will see that the generalized geometric representations admit a similar criterion.
As an application, we obtain in the next subsection a description of some other reflection representations.

Let $V = \bigoplus_{s \in S} \mathbb{C} \alpha_s$ be a generalized geometric representation of $W$ defined by a GGR-datum, say, $(k_{rt}, a_r^t)_{r,t \in S, r\ne t}$, and $\widetilde{G}$ be the associated graph, with corresponding Coxeter group $\widetilde{W}$ (see Subsection \ref{subsec-tldG}).
Suppose $S = \sqcup_{i \in I} S_i$ is the decomposition of $\widetilde{G}$ into connected components (i.e., for each $S_i$, any two vertices in $S_i$ are connected by a path in $\widetilde{G}$, while for $i \ne j$ and $s \in S_i$, $t \in S_j$, there is no path in $\widetilde{G}$ connecting $s$ and $t$).
Let $W_i$ and $\widetilde{W}_i$ denote the parabolic subgroups of $W$ and $\widetilde{W}$ generated by $S_i$ respectively.
The following lemma is clear.

\begin{lemma}\label{lem-decomp-into-comp}
  Notations as above.
  \begin{enumerate}
    \item $\widetilde{W} = \prod_{i \in I} \widetilde{W}_i$.
    \item The representation $V$ is decomposed into $V = \bigoplus_{i \in I} V_i$, where 
        \[V_i = \bigoplus_{s \in S_i} \mathbb{C} \alpha_s\]
        is a subrepresentation of $W$.
    \item For $j \neq i$, $W_j$ acts trivially on $V_i$.
    \item Each $V_i$ is a generalized geometric representation of the Coxeter group $(W_i, S_i)$, defined by the datum $(k_{rt}, a_r^t)_{r,t \in S_i, r\neq t}$, and factors through $\widetilde{W}_i$.
  \end{enumerate}
\end{lemma}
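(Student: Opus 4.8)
The plan is to prove the four assertions of Lemma \ref{lem-decomp-into-comp} largely by unwinding definitions, since the essential structural input has already been established. The key observation underlying everything is that for $r \in S_i$ and $t \in S_j$ with $i \ne j$, the vertices $r$ and $t$ lie in distinct components of $\widetilde{G}$, which by definition of the associated graph means $\widetilde{m}_{rt} = \frac{m_{rt}}{\gcd(m_{rt}, k_{rt})} = 2$, equivalently $k_{rt} = \frac{m_{rt}}{2}$. By the remark following Lemma \ref{lem4.2} (and the splitting $\rho_{\frac{m}{2}} \simeq \varepsilon_r \oplus \varepsilon_t$ recorded before Lemma \ref{lem-Dm-rep}), this forces $r \cdot \alpha_t = \alpha_t$ and $t \cdot \alpha_r = \alpha_r$; that is, generators from different components act trivially on each other's reflection vectors.

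First I would prove part (4) and part (2) together, as they carry the main content. For a fixed component $S_i$, I would observe that the restriction of the GGR-datum to indices in $S_i$, namely $(k_{rt}, a_r^t)_{r,t \in S_i,\, r \ne t}$, is itself a GGR-datum for $(W_i, S_i)$, and that the defining action \eqref{eq-IR-action} restricted to $V_i = \bigoplus_{s \in S_i} \mathbb{C}\alpha_s$ only ever references coefficients with both indices in $S_i$. Hence $V_i$ is $W_i$-invariant and is precisely the generalized geometric representation of $W_i$ attached to the restricted datum, which factors through $\widetilde{W}_i$ by the factorization remark of Subsection \ref{subsec-tldG} (an application of Lemma \ref{lem-2.6}). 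To upgrade $W_i$-invariance of $V_i$ to $W$-invariance, I would use the trivial cross-action above: for $s \in S_j$ with $j \ne i$ and any $t \in S_i$, we have $s \cdot \alpha_t = \alpha_t \in V_i$, so every generator of $W$ preserves $V_i$. Since $V = \bigoplus_{i} V_i$ as vector spaces and each summand is $W$-stable, part (2) follows.

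Next, part (3) is an immediate strengthening of the cross-action computation: if $s \in S_j$ and $t \in S_i$ with $j \ne i$, then $s \cdot \alpha_t = \alpha_t$, and since the $\alpha_t$ for $t \in S_i$ span $V_i$, every generator of $W_j$ acts as the identity on $V_i$, whence all of $W_j$ does. For part (1), I would note that $\widetilde{W}$ is the Coxeter group on $S$ with matrix $(\widetilde{m}_{rt})$, and $\widetilde{m}_{rt} = 2$ whenever $r,t$ lie in different components; thus generators from different components commute and act trivially across components, giving the internal direct product decomposition $\widetilde{W} = \prod_{i \in I} \widetilde{W}_i$ by the standard description of parabolic subgroups of a Coxeter group whose diagram is a disjoint union.

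I do not anticipate a serious obstacle here, as the lemma is flagged as ``clear'' and each part reduces to the single fact that $k_{rt} = \frac{m_{rt}}{2}$ across components forces the splitting $\varepsilon_r \oplus \varepsilon_t$. The one point requiring mild care is making explicit \emph{why} cross-component vertices satisfy $k_{rt} = \frac{m_{rt}}{2}$: this is exactly the criterion defining the edge set $E$ of $\widetilde{G}$, so two vertices in different components of $\widetilde{G}$ are non-adjacent, and I would spell this out rather than leave it implicit. Everything else is bookkeeping with the action formula \eqref{eq-IR-action} and the direct-sum decomposition of the index set $S = \sqcup_{i \in I} S_i$.
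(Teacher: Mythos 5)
Your proof is correct and is precisely the argument the paper intends: the paper offers no proof at all (the lemma is introduced with ``The following lemma is clear''), and your reduction of all four parts to the single observation that cross-component pairs satisfy $k_{rt} = \frac{m_{rt}}{2}$, hence $\cos \frac{k_{rt}\uppi}{m_{rt}} = 0$ and $r \cdot \alpha_t = \alpha_t$ in the action formula \eqref{eq-IR-action}, is exactly the intended unwinding of definitions. The remaining ingredients you invoke --- Lemma \ref{lem-defrep} applied to the restricted datum for part (4), the factorization remark of Subsection \ref{subsec-tldG} for the passage through $\widetilde{W}_i$, and the standard direct-product decomposition of a Coxeter group with disconnected diagram for part (1) --- all check out.
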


Thus, we can focus our study on the direct summands $V_i$.

\begin{proposition}\label{prop-reducibility}
Notations as above.
Suppose $\widetilde{G}$ is connected, and $U \subsetneq V$ is a subrepresentation of $W$.
\begin{enumerate}
  \item \label{prop-red-1} The action of $W$ on $U$ is trivial.
  \item \label{prop-red-2} If $V$ is reducible, then $V$ has a maximal subrepresentation $V_0$ with trivial $W$-action, and the quotient $V/V_0$ is irreducible.
  \item \label{prop-red-3} The representation $V$ is indecomposable.
\end{enumerate}
\end{proposition}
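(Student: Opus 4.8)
The plan is to prove part \eqref{prop-red-1} first and then deduce parts \eqref{prop-red-2} and \eqref{prop-red-3} as formal consequences. The engine behind \eqref{prop-red-1} is the interaction of the reflection property (Lemma \ref{lem-refl}) with the connectivity of $\widetilde{G}$: along every edge of $\widetilde{G}$ the corresponding dihedral piece is irreducible, so ``owning'' one reflection vector inside $U$ forces owning the neighbouring one. Throughout I would use that $V$ factors through $\widetilde{W}$ and that an edge $\{s,t\}$ of $\widetilde{G}$ means exactly $k_{st} < \frac{m_{st}}{2}$, whence $\cos\frac{k_{st}\uppi}{m_{st}} \neq 0$.

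First I would record a dichotomy for each generator. Fix $s \in S$; since $U$ is $W$-invariant it is $s$-invariant, and $s$ acts on $V$ as a reflection with reflection vector $\alpha_s$. For any $u \in U$, Lemma \ref{lem-refl} gives $u - s\cdot u \in \mathbb{C}\alpha_s$, while $u - s\cdot u \in U$ by invariance, so $u - s\cdot u \in U \cap \mathbb{C}\alpha_s$. Hence \emph{either} $\alpha_s \in U$, \emph{or} $U \cap \mathbb{C}\alpha_s = 0$ and $s$ fixes $U$ pointwise. The crucial propagation step, where connectivity of $\widetilde{G}$ enters, is this: suppose $\alpha_s \in U$ and let $t$ be any neighbour of $s$ in $\widetilde{G}$. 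By Equation \eqref{eq-IR-action}, $t\cdot \alpha_s = \alpha_s + c\,\alpha_t$ with $c = 2\frac{a_t^s}{a_s^t}\cos\frac{k_{st}\uppi}{m_{st}} \neq 0$; since $t\cdot\alpha_s \in U$ and $\alpha_s \in U$, we obtain $\alpha_t \in U$. Thus $\{s \mid \alpha_s \in U\}$ is closed under adjacency, and as $\widetilde{G}$ is connected it is either empty or all of $S$. The latter would give $U = V$, contradicting $U \subsetneq V$; hence $\alpha_s \notin U$ for every $s$, so by the dichotomy each generator fixes $U$ pointwise, and since $S$ generates $W$, the action of $W$ on $U$ is trivial. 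This proves \eqref{prop-red-1}.

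For \eqref{prop-red-2} I would set $V_0 := V^W$, the space of $W$-fixed vectors, which is a subrepresentation with trivial action; since $s\cdot\alpha_s = -\alpha_s \neq \alpha_s$ we have $V_0 \subsetneq V$. Part \eqref{prop-red-1} says every proper subrepresentation $U$ acts trivially, i.e. $U \subseteq V_0$, so $V_0$ is the maximal proper subrepresentation. Consequently any subrepresentation of $V/V_0$ lifts to some $U'$ with $V_0 \subseteq U' \subseteq V$; if $U' \neq V$ then $U' \subseteq V_0$ by \eqref{prop-red-1}, forcing $U' = V_0$, so $V/V_0$ has no nonzero proper subrepresentation and is irreducible. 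For \eqref{prop-red-3}, a splitting $V = V' \oplus V''$ into nonzero subrepresentations would make both summands proper, hence acted on trivially by \eqref{prop-red-1}, forcing $W$ to act trivially on all of $V$ --- impossible since $s\cdot\alpha_s = -\alpha_s$; alternatively \eqref{prop-red-3} is immediate from Corollary \ref{cor-end-IR}, which yields $\operatorname{End}_W(V) \simeq \mathbb{C}$ when $\widetilde{G}$ is connected.

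The only delicate point, and the one I would be most careful about, is the propagation argument: I must confirm that the off-diagonal coefficient $c$ is nonzero (guaranteed precisely by the definition of the edges of $\widetilde{G}$) and that connectivity lets me reach every vertex. Once \eqref{prop-red-1} is secured, the remaining two parts are purely formal.
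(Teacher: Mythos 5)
Your proof is correct and takes essentially the same route as the paper: both establish part \eqref{prop-red-1} by using Lemma \ref{lem-refl} to show that a subrepresentation on which some $s$ acts nontrivially must contain $\alpha_s$, and then propagating reflection vectors along edges of the connected graph $\widetilde{G}$, the only cosmetic difference being that you verify the propagation via the explicit nonzero coefficient in Equation \eqref{eq-IR-action} whereas the paper invokes the irreducibility of the dihedral piece $\mathbb{C}\langle \alpha_s, \alpha_r\rangle \simeq \rho_{k_{sr}}$. Your explicit derivations of \eqref{prop-red-2} and \eqref{prop-red-3} (and the alternative via Corollary \ref{cor-end-IR}, which precedes the proposition so there is no circularity) correctly fill in what the paper states as an immediate consequence of \eqref{prop-red-1}.
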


\begin{proof}
Suppose $v \in U$ and $s \cdot v \neq v$ for some $s \in S$.
Then $v - s \cdot v \in \mathbb{C}^\times \alpha_s$ by Lemma \ref{lem-refl}.
Thus $\alpha_s \in U$.
If $r \in S$ is adjacent to $s$ in $\widetilde{G}$, then $k_{rt} < \frac{m_{rt}}{2}$. 
We have $\alpha_r \in U$ since $\mathbb{C}\langle \alpha_s, \alpha_r \rangle$ forms an irreducible representation of $\langle s, r \rangle$ isomorphic to $\rho_{k_{rt}}$.
Inductively, for any $t \in S$ we have $\alpha_t \in U$ since $\widetilde{G}$ is connected. 
Thus $U = V$ which contradicts $U \subsetneq V$.
The points \eqref{prop-red-2} and \eqref{prop-red-3} follow from \eqref{prop-red-1}.
\end{proof}

\begin{remark}
  If $\widetilde{G}$ is not assumed to be connected and $V$ is decomposed into $\bigoplus_{i \in I} V_i$ as in Lemma \ref{lem-decomp-into-comp}, then  Proposition \ref{prop-reducibility} can be applied to each  $V_i$.
\end{remark}

Suppose $\widetilde{G}$ is connected.
We label the elements in $S$ so that $$S = \{s_1, s_2, \dots, s_n\},$$ and we use notations $\alpha_i, k_{ij}, a_i^j,$ etc. as in the proof of Lemma \ref{lem-chi1=2}.
Let 
\[\text{$v = \sum_{j=1}^n x_j \alpha_j \in V$ ($x_j \in \mathbb{C}$),}\] 
then
\begin{align*}
    s_i \cdot v & = - x_i \alpha_i + \sum_{j \neq i} x_j \Bigl(\alpha_j + 2 \frac{a_i^j}{a_j^i} \cos \frac{k_{ij} \uppi}{m_{ij}} \alpha_i \Bigr) \\
     & = \Bigl(-x_i + \sum_{j \neq i} 2 x_j \frac{a_i^j}{a_j^i} \cos \frac{k_{ij} \uppi}{m_{ij}}\Bigr) \alpha_i + \sum_{j \neq i} x_j \alpha_j.
\end{align*}
If $v$ is fixed by each $s_i$, then
\begin{equation}  \label{eq6.1}
    x_i - \sum_{j \neq i}  x_j \frac{a_i^j}{a_j^i} \cos \frac{k_{ij} \uppi}{m_{ij}} = 0, \quad \forall i = 1,\dots,n.
\end{equation}
By Proposition \ref{prop-reducibility}, $V$ is reducible if and only if there exists a nonzero vector $v \in V$ fixed by each $s_i$. 
This amounts to saying that the set of equations \eqref{eq6.1} has a nonzero solution for the variables $\{x_i \mid i = 1, \dots, n\}$, which is also equivalent to saying that the following $n \times n$ matrix is singular:
\begin{equation}  \label{matrixA}
    A := \begin{pmatrix}
           1 & - \frac{a_1^2}{a_2^1} \cos \frac{k_{12} \uppi}{m_{12}} & \dots \\
           - \frac{a_2^1}{a_1^2} \cos \frac{k_{12} \uppi}{m_{12}} & 1 &  \\
           \vdots &  & \ddots
         \end{pmatrix}
\end{equation}
(All diagonal elements are $1$, and for any $i \neq j$ the element at $(i,j)$-position is $- \frac{a_i^j}{a_j^i} \cos \frac{k_{ij} \uppi}{m_{ij}}$. Note that $A$ is not symmetric in general.)
The co-rank of $A$ equals to the dimension of the solution space, as well as the dimension of the maximal subrepresentation of $V$.
To conclude:

\begin{theorem}\label{thm-reducibility}
  Let $V$ be a generalized geometric representation of $W$ defined by a GGR-datum, say,  $(k_{rt}, a_r^t)_{r,t \in S, r\ne t}$. 
  Suppose the associated graph $\widetilde{G}$ is connected.
  \begin{enumerate}
    \item \label{6.3-1} The representation $V$ is irreducible if and only if the matrix $A$ defined in Equation \eqref{matrixA} is invertible.
    \item \label{6.3-2} If $V$ is reducible, and $V_0$ is the maximal subrepresentation, then the quotient $V/V_0$ is irreducible of dimension $\operatorname{rank}(A)$.
  \end{enumerate}
\end{theorem}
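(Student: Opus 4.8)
The plan is to observe that essentially all the substantive work has already been carried out in Proposition \ref{prop-reducibility} and in the explicit computation of fixed vectors preceding the statement, so that the proof reduces to a single bookkeeping observation together with an assembly of those results. The key point I would make first is that the system \eqref{eq6.1} of fixed-point equations is nothing but the homogeneous linear system $A\mathbf{x} = 0$, where $\mathbf{x} = (x_1, \dots, x_n)^{\mathrm{T}}$ records the coordinates of $v = \sum_j x_j \alpha_j$ in the basis $\{\alpha_j\}$. Indeed, pairing the $i$-th row of $A$ with $\mathbf{x}$ yields $x_i - \sum_{j \ne i} x_j \frac{a_i^j}{a_j^i}\cos\frac{k_{ij}\uppi}{m_{ij}}$, which is exactly the left-hand side of the $i$-th equation in \eqref{eq6.1}. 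Since the generators $s_i$ generate $W$, the space $V^W$ of $W$-invariant vectors coincides with the set of vectors fixed by every $s_i$, and the coordinate identification $V \cong \mathbb{C}^n$ carries $V^W$ onto $\ker A$.

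Next I would record the elementary fact that $V^W$ is always a proper subrepresentation: it is a subrepresentation because $W$ acts trivially on it by definition, and it is proper because no reflection vector $\alpha_i$ lies in it, as $s_i \cdot \alpha_i = -\alpha_i \ne \alpha_i$. For part \eqref{6.3-1}, Proposition \ref{prop-reducibility}\eqref{prop-red-1} tells us that every proper subrepresentation of $V$ carries the trivial $W$-action, hence is contained in $V^W$; conversely every subspace of $V^W$ is a subrepresentation. Consequently $V$ has a nonzero proper subrepresentation if and only if $V^W \ne 0$, i.e. if and only if $\ker A \ne 0$, i.e. if and only if $A$ is singular. This yields the equivalence between irreducibility of $V$ and invertibility of $A$.

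For part \eqref{6.3-2}, I would identify the abstract maximal subrepresentation with the concrete fixed space by a two-sided inclusion. Proposition \ref{prop-reducibility}\eqref{prop-red-2} supplies a maximal subrepresentation $V_0$ with trivial action, and triviality of the action forces $V_0 \subseteq V^W$; on the other hand $V^W$ is itself a proper subrepresentation, so maximality of $V_0$ gives $V^W \subseteq V_0$. Hence $V_0 = V^W$, which under the coordinate identification is $\ker A$, so $\dim V_0 = n - \operatorname{rank}(A)$. Therefore $V/V_0$ has dimension $\operatorname{rank}(A)$, and it is irreducible by Proposition \ref{prop-reducibility}\eqref{prop-red-2}.

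I do not expect any serious obstacle here, since the theorem is in effect a summary of the preceding discussion. The only step requiring a little care is the matching of the matrix $A$ to the system \eqref{eq6.1} with the correct placement of indices and signs, and the clean identification of the maximal subrepresentation $V_0$ with the fixed space $V^W = \ker A$; the latter rests on the structural input of Proposition \ref{prop-reducibility}\eqref{prop-red-1} (every proper subrepresentation is trivial) rather than on any further computation.
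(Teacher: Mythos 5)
Your proposal is correct and follows essentially the same route as the paper: the paper's proof is precisely the computation identifying the fixed-point equations \eqref{eq6.1} with the system $A\boldsymbol{x}=0$, combined with Proposition \ref{prop-reducibility} to equate reducibility with the existence of a nonzero invariant vector and to identify the maximal subrepresentation with the solution space. Your added bookkeeping (the two-sided inclusion $V_0 = V^W$ and the observation that no $\alpha_i$ is invariant) is a slightly more explicit rendering of what the paper leaves implicit, but not a different argument.
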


\begin{example}
  In Example \ref{eg-affine-A}, if we choose $a_0^n = x$ and $a_n^0 = a_0^1 = a_1^0 =  \dots = 1$, then the associated character $\chi$ satisfies $\chi(c) = x$.
  The square matrix $A$ (of size $n + 1$, defined in Equation \eqref{matrixA}) is
  \begin{equation*}
    A = \begin{pmatrix}
        1 & - \frac{1}{2} & 0 & \dots & 0 & - \frac{x}{2} \\
        - \frac{1}{2} & 1 & - \frac{1}{2} &  & & 0 \\
        0 & - \frac{1}{2} & \ddots & \ddots &  & \vdots \\
        \vdots &  & \ddots & \ddots & - \frac{1}{2} & 0 \\
        0 & &  & - \frac{1}{2} & 1 & - \frac{1}{2} \\
        - \frac{1}{2x} & 0 & \dots & 0 & - \frac{1}{2} & 1
      \end{pmatrix}
  \end{equation*}
  One may use induction on $n$ to compute the determinant of $A$,
  \begin{equation*}
    \det A = \frac{2 - x - x^{-1}}{2^{n+1}}.
  \end{equation*}
  Then, by Theorem \ref{thm-reducibility}, this generalized geometric representation is irreducible if and only if $x \neq 1$.
  If $x = 1$, then this representation is the geometric representation $\Vg$ of $\widetilde{\mathsf{A}}_n$.
\end{example}

In general, if $\widetilde{G}$ is not assumed to be connected, then we have the following result by Lemma \ref{lem-decomp-into-comp} and Theorem \ref{thm-reducibility}.

\begin{corollary} \label{cor-reducibility}
  Suppose $V$ is a generalized geometric representation of $W$. Let
  \begin{equation*}
    V_0 := \{v \in V \mid w \cdot v = v, \forall w \in W\}
  \end{equation*}
  be the maximal subrepresentation with trivial $W$-action.
  Then $V/V_0$ is semisimple and $\dim V/V_0 = \rank(A)$.
  In particular, $V$ is semisimple if and only if $A$ is invertible.
\end{corollary}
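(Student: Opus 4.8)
The plan is to reduce everything to the connected case already handled in Theorem \ref{thm-reducibility} by splitting $V$ along the connected components of $\widetilde{G}$. Write $S = \bigsqcup_{i \in I} S_i$ for the decomposition into components and, following Lemma \ref{lem-decomp-into-comp}, decompose $V = \bigoplus_{i \in I} V_i$ with $V_i = \bigoplus_{s \in S_i} \mathbb{C}\alpha_s$ a subrepresentation whose associated graph is the connected component $S_i$. The first step is to identify $V_0$ with $\bigoplus_{i \in I} V_{i,0}$, where $V_{i,0} \subseteq V_i$ denotes the maximal $W_i$-subrepresentation with trivial action. Since $W_j$ acts trivially on $V_i$ for $j \neq i$, a subspace of $V_i$ is $W$-stable precisely when it is $W_i$-stable, so the $W$-fixed points of $V_i$ coincide with $V_{i,0}$; writing $v = \sum_i v_i$ and using that the sum is direct with each $V_i$ being $W$-stable, one sees $w \cdot v = v$ for all $w$ if and only if $v_i \in V_{i,0}$ for every $i$, which gives $V_0 = \bigoplus_i V_{i,0}$.

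Next I would apply Proposition \ref{prop-reducibility} and Theorem \ref{thm-reducibility} componentwise. Each $V_i$ has connected associated graph, so by Proposition \ref{prop-reducibility} either $V_i$ is irreducible (and $V_{i,0} = 0$) or $V_i$ is reducible with $V_i/V_{i,0}$ irreducible; in both cases $V_i/V_{i,0}$ is irreducible. Hence
\[ V/V_0 \simeq \bigoplus_{i \in I} V_i/V_{i,0} \]
is a direct sum of irreducible representations, so $V/V_0$ is semisimple. Moreover Theorem \ref{thm-reducibility}\eqref{6.3-2} gives $\dim V_i/V_{i,0} = \rank(A_i)$, where $A_i$ is the matrix of Equation \eqref{matrixA} formed from the indices in $S_i$, so that $\dim V/V_0 = \sum_{i \in I} \rank(A_i)$.

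It remains to match $\sum_i \rank(A_i)$ with $\rank(A)$ and to establish the semisimplicity criterion. The key observation is that $A$ is block diagonal with blocks the $A_i$ once the basis is ordered by components: for $s_i \in S_i$ and $s_j \in S_{i'}$ with $i \neq i'$ the vertices are non-adjacent in $\widetilde{G}$, which forces $k_{ij} = \frac{m_{ij}}{2}$ (including the case $m_{ij} = 2$, where $k_{ij} = 1$), and therefore the off-block entry $-\frac{a_i^j}{a_j^i}\cos\frac{k_{ij}\uppi}{m_{ij}}$ vanishes because $\cos\frac{\uppi}{2} = 0$. Consequently $\rank(A) = \sum_i \rank(A_i) = \dim V/V_0$. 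Finally, $V$ is semisimple if and only if each $V_i$ is semisimple; by Proposition \ref{prop-reducibility}\eqref{prop-red-3} a reducible $V_i$ is indecomposable and hence not semisimple, so $V_i$ is semisimple exactly when it is irreducible, i.e. when $A_i$ is invertible, and by block-diagonality this happens for all $i$ simultaneously if and only if $A$ is invertible.

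The step I expect to require the most care is the identification $V_0 = \bigoplus_i V_{i,0}$ together with the block-diagonal structure of $A$: the former relies on the interplay between $W$-invariance and $W_i$-invariance, so that the componentwise fixed spaces genuinely assemble into the global one, while the latter hinges on the vanishing of the cosine for non-adjacent pairs, which is precisely what makes the global rank additive over components and lets Theorem \ref{thm-reducibility} glue into the stated corollary.
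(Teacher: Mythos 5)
Your proof is correct and takes essentially the same approach as the paper, which derives Corollary \ref{cor-reducibility} directly from Lemma \ref{lem-decomp-into-comp} and Theorem \ref{thm-reducibility} by decomposing $V$ along the connected components of $\widetilde{G}$ and applying the connected case componentwise. The details you spell out --- the identification $V_0 = \bigoplus_i V_{i,0}$ and the block-diagonality of $A$, which follows since $k_{st} = \frac{m_{st}}{2}$ and hence $\cos\frac{k_{st}\uppi}{m_{st}} = 0$ for vertices in different components --- are exactly the steps the paper leaves implicit, and you carry them out correctly.
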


\subsection{R-representations} \label{subsec-R}
In this subsection we investigate a larger class of reflection representations, relaxing the requirement that reflection vectors form a basis in the definition of generalized geometric representations.
In order to facilitate discussion, we give the following definition.

\begin{definition} \label{def-R}
  Let $V$ be a reflection representation of $W$.
  We call $V$ an R-representation of $W$ if 
  \begin{enumerate}
    \item $V$ is spanned by reflection vectors $\{\alpha_s\mid s\in S\}$;
    \item for any pair $r, t \in S$ $(r \ne t)$ such that $m_{rt} < \infty$, the vectors $\alpha_r, \alpha_t$ are linearly independent.
  \end{enumerate}
\end{definition}

\begin{remark} \label{rmk-a=1}
  The motivation to consider R-representations arises from Kazhdan--Lusztig theory.
  Roughly speaking, if $(W,S)$ is irreducible (i.e., the Coxeter graph is connected), then the R-representations are those reflection representations of $W$ such that any nontrivial composition factor of the representation is a quotient of the cell representation given by the second-highest two-sided cell in the sense of Kazhdan and Lusztig \cite{KL79, Lusztig83-intrep}.
  Moreover, for some specific Coxeter groups, it can be shown that any irreducible representation corresponding to this two-sided cell must be an R-representation.
  We shall discuss this issue and present relevant results in another paper \cite{Hu23}.
  See also \cite{DPWX22} for relevant results about the corresponding based ring $J$, which carries the structure of the cell representation.
\end{remark}

Note that at present we require $m_{rt} < \infty$ for any $r,t$ (see the beginning of Section \ref{sec-IR}), so any two vectors $\alpha_r, \alpha_t$ are not proportional in an R-representation.
By definition, the R-representations form a larger class than the generalized geometric representations.
But in fact, there is not too much difference between them, as we shall show now.

\begin{theorem}\label{thm-R1} \leavevmode
  \begin{enumerate}
    \item \label{5.6.1} Let $V$ be an R-representation of $W$. There is a unique (up to isomorphism) generalized geometric representation $V^\prime$ such that $V$ is a quotient of $V^\prime$, say, $\pi : V^\prime \twoheadrightarrow V$, and $W$ acts on $\ker \pi$ trivially.
    \item \label{5.6.3} The R-representation $V$ is semisimple if and only if  $\ker \pi$ is the maximal subrepresentation of $V^\prime$ with trivial $W$-action.
    \item \label{5.6.2} The isomorphism classes of semisimple R-representations one-to-one correspond to the isomorphism classes of generalized geometric representations, and thus to the set of data $\{((k_{rt})_{r,t \in S, r \ne t}, \chi)\}$ in Theorem \ref{thm-IR}.
        In particular, simple R-representations correspond to those data such that the associated graph $\widetilde{G}$ is connected.
  \end{enumerate}
\end{theorem}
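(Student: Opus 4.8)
The plan is to prove the three parts in order, with part \eqref{5.6.1} furnishing the construction on which \eqref{5.6.3} and \eqref{5.6.2} rest. For \eqref{5.6.1} I would build $V^\prime$ directly out of $V$. Since $V$ is an R-representation, the reflection vectors $\{\alpha_s\}_{s\in S}$ span $V$ and, for $r\ne t$, are linearly independent (we assume $m_{rt}<\infty$); so the argument of Lemma \ref{lem4.2}, which uses only this independence, shows that $\mathbb{C}\langle\alpha_r,\alpha_t\rangle$ is a $\langle r,t\rangle$-subrepresentation isomorphic to some $\rho_{k_{rt}}$, and by Lemma \ref{lem-refl} the action takes the form $r\cdot\alpha_t=\alpha_t+2\frac{a_r^t}{a_t^r}\cos\frac{k_{rt}\uppi}{m_{rt}}\alpha_r$ with the ratios $a_r^t/a_t^r$ genuinely read off from the action on $V$. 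Reading off a GGR-datum $(k_{rt},a_r^t)_{r,t\in S, r\ne t}$ in this way, I let $V^\prime=\bigoplus_{s\in S}\mathbb{C}\alpha_s^\prime$ be the generalized geometric representation attached to it by Equation \eqref{eq-IR-action}, and set $\pi(\alpha_s^\prime):=\alpha_s$. Then $\pi$ is surjective and $W$-equivariant, as both actions are given by the \emph{same} formulas on the spanning reflection vectors. The key point is that $W$ acts trivially on $\ker\pi$: for $v^\prime=\sum_s x_s\alpha_s^\prime\in\ker\pi$, Lemma \ref{lem-refl} applied in $V^\prime$ gives $s\cdot v^\prime-v^\prime=\lambda_s\alpha_s^\prime$ for some scalar, and applying $\pi$ yields $\lambda_s\alpha_s=s\cdot\pi(v^\prime)-\pi(v^\prime)=0$; since $\alpha_s\ne 0$ we get $\lambda_s=0$, so $s\cdot v^\prime=v^\prime$ for every $s$.

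Uniqueness in \eqref{5.6.1} is then formal: for any generalized geometric $V^{\prime\prime}$ with $\pi^{\prime\prime}\colon V^{\prime\prime}\twoheadrightarrow V$ and trivial kernel action, each reflection vector of $V^{\prime\prime}$ maps to a \emph{nonzero} multiple of the corresponding $\alpha_s$ (it cannot map to $0$, else it would be a vector of $\ker\pi^{\prime\prime}$ not fixed by $s$), so $V^{\prime\prime}$ has the same numbers $k_{rt}$ and the same associated character $\chi$ as those read off from $V$; by Theorem \ref{thm-IR}, $V^{\prime\prime}\simeq V^\prime$. For \eqref{5.6.3} I would first record that a semisimple R-representation has no nonzero trivial summand: writing $V=T\oplus V^{\prime\prime}$ with $T$ the trivial isotypic part, projecting $s\cdot\alpha_s=-\alpha_s$ to $T$ (where $s$ acts as the identity) gives $\alpha_s^T=-\alpha_s^T$, hence $\alpha_s^T=0$; as the $\alpha_s$ span $V$, this forces $T=0$. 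Now with $V_0^\prime$ the maximal trivial subrepresentation of $V^\prime$ (so $\ker\pi\subseteq V_0^\prime$): if $\ker\pi=V_0^\prime$ then $V=V^\prime/V_0^\prime$ is semisimple by Corollary \ref{cor-reducibility}; conversely if $V$ is semisimple then $V_0^\prime/\ker\pi$ is a trivial subrepresentation of $V$, hence a trivial direct summand, hence zero by the observation, so $\ker\pi=V_0^\prime$.

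For \eqref{5.6.2}, the assignments $V\mapsto V^\prime$ (from \eqref{5.6.1}) and $V^\prime\mapsto V^\prime/V_0^\prime$ are mutually inverse once I verify that $V^\prime/V_0^\prime$ is a semisimple R-representation. Semisimplicity is Corollary \ref{cor-reducibility}. For the reflection structure, the image $\bar\alpha_s^\prime$ is nonzero (as $\alpha_s^\prime\notin V_0^\prime$, since $s$ does not fix it) and satisfies $s\cdot\bar\alpha_s^\prime=-\bar\alpha_s^\prime$; because $s$ has order two and acts semisimply, its $(-1)$-eigenspace is the line $\mathbb{C}\bar\alpha_s^\prime$ and its $(+1)$-eigenspace is a complementary hyperplane, so $s$ acts by a reflection. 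Linear independence of $\bar\alpha_r^\prime,\bar\alpha_t^\prime$ follows from Lemma \ref{lem-2.4}\eqref{lem-2.4-2}: any nontrivial relation would place a nonzero element of $\mathbb{C}\langle\alpha_r^\prime,\alpha_t^\prime\rangle\simeq\rho_{k_{rt}}$ inside the trivial space $V_0^\prime$, i.e. a nonzero vector fixed by both $r$ and $t$, which is impossible. Thus $V^\prime/V_0^\prime$ is a semisimple R-representation, and combining \eqref{5.6.1}, \eqref{5.6.3} with Theorem \ref{thm-IR} yields the bijection with the data $((k_{rt})_{r,t\in S, r\ne t},\chi)$. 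For the last sentence, decomposing $V^\prime=\bigoplus_i V_i^\prime$ over the components of $\widetilde{G}$ (Lemma \ref{lem-decomp-into-comp}) gives $V^\prime/V_0^\prime=\bigoplus_i V_i^\prime/V_{i,0}^\prime$, a direct sum of $g$ nonzero irreducibles by Proposition \ref{prop-reducibility}; this is simple exactly when $g=1$, i.e. when $\widetilde{G}$ is connected.

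The step I expect to be the main obstacle is making \eqref{5.6.1} fully watertight: checking that the scalars extracted from the action on $V$ really assemble into a legitimate GGR-datum whose associated character is an invariant of $V$ (so that uniqueness can be routed through Theorem \ref{thm-IR}), together with the verification in \eqref{5.6.2} that the quotient $V^\prime/V_0^\prime$ is honestly a \emph{reflection} representation and not merely a representation spanned by images of reflection vectors. The remaining bookkeeping—equivariance, surjectivity, and the component decomposition—is routine given the earlier lemmas.
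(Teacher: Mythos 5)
Your proof is correct, and its skeleton matches the paper's: read a GGR-datum off $V$ via Lemma \ref{lem4.2} (valid under assumption \eqref{eq-fin-condition}), define $V^\prime$ by Equation \eqref{eq-IR-action} with $\pi(\alpha_s^\prime) = \alpha_s$, prove uniqueness by rescaling so that $\pi^{\prime\prime}(\alpha_s^{\prime\prime}) = \alpha_s$ and matching the $k_{rt}$ and the ratios $a_r^t/a_t^r$ on edges of $\widetilde{G}$, then invoke Theorem \ref{thm-IR}. You deviate at two genuine local points, both simplifications. First, for triviality of the $W$-action on $\ker \pi$, the paper decomposes $V^\prime = \bigoplus_i V_i^\prime$ over the components of $\widetilde{G}$ (Lemma \ref{lem-decomp-into-comp}) and argues componentwise that $v - s \cdot v \in \ker \pi$ would force $\alpha_s^\prime \in \ker \pi$; your one-line version --- $s \cdot v^\prime - v^\prime = \lambda_s \alpha_s^\prime$ by Lemma \ref{lem-refl}, apply $\pi$ and use $\alpha_s \ne 0$ --- reaches the same conclusion without the decomposition, and $\ker \pi \subseteq V_0^\prime$ then follows by maximality. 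Second, for the ``only if'' half of \eqref{5.6.3}, the paper exhibits $\pi(V_j^\prime)$ as an indecomposable but reducible subrepresentation of $V$ when $V_{j,0}^\prime \nsubseteq \ker \pi$ (with the indecomposability asserted rather than proved), whereas you observe that a semisimple R-representation has zero trivial isotypic component (each $\alpha_s$ projects to zero there since $s \cdot \alpha_s = -\alpha_s$, and the $\alpha_s$ span), so the trivial subrepresentation $V_0^\prime/\ker \pi$ of $V$ must vanish; your route is cleaner and fully justified. Your treatment of \eqref{5.6.2} fleshes out what the paper dispatches as ``deduced from \eqref{5.6.1} and \eqref{5.6.3}'': the eigenspace check that each $s$ still acts as a reflection on $V^\prime/V_0^\prime$ (using $V_0^\prime \subseteq H_s$ and $\alpha_s^\prime \notin V_0^\prime$) and the independence of the images via Lemma \ref{lem-2.4}\eqref{lem-2.4-2} are exactly what the paper compresses into ``$V^\prime/V_0^\prime$ is an R-representation since $\alpha_s^\prime \notin V_0^\prime$''. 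One remark: your uniqueness argument uses the trivial-kernel hypothesis to get $\pi^{\prime\prime}(\alpha_s^{\prime\prime}) \ne 0$, which is all the statement demands; in fact nonvanishing is automatic for any surjective cover (pick $v^{\prime\prime}$ with $\pi^{\prime\prime}(v^{\prime\prime}) = \alpha_s$ and apply $\pi^{\prime\prime}$ to $s \cdot v^{\prime\prime} - v^{\prime\prime} \in \mathbb{C}\alpha_s^{\prime\prime}$), so the paper's uniqueness holds without that hypothesis, a marginally stronger statement.
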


\begin{proof}
  In an R-representation $V = \sum_{s \in S} \mathbb{C} \alpha_s$ of $W$, the subspace $\mathbb{C} \langle \alpha_r, \alpha_t \rangle$ is two dimensional for any $r,t \in S$ with $r \ne t$ by assumption \eqref{eq-fin-condition}.
  Clearly, the conclusions of Lemma \ref{lem4.2} are still valid.
  Therefore, we can extract a GGR-datum $(k_{rt}, a_r^t)_{r,t \in S, r \ne t}$ from the R-representation $V$ as we did in subsection \ref{subsec-struIR}.
  Now let $V^\prime = \bigoplus_{s \in S} \mathbb{C} \alpha_s^\prime$ be the generalized geometric representation of $W$ defined by this datum, and $\pi: V^\prime \twoheadrightarrow V$ be the surjective linear map defined by $\alpha_s^\prime \mapsto \alpha_s$.
  One may easily verify that $\pi$ is a homomorphism of $W$-representations (that is, $s \cdot \pi (\alpha_t^\prime) = \pi(s \cdot \alpha_t^\prime)$).
  Thus $V$ is a quotient of $V^\prime$.
  This proves the ``existence'' part of \eqref{5.6.1}.

  Suppose $V^{\prime \prime} = \bigoplus_{s \in S} \mathbb{C} \alpha_s^{\prime \prime}$ is a generalized geometric representation defined by another  datum $(l_{rt}, b_r^t)_{r,t \in S, r \ne t}$, and suppose there is a  surjective homomorphism $\pi^{\prime \prime}: V^{\prime \prime} \twoheadrightarrow V$ of $W$-representations.
  After rescaling the vectors $\alpha_s^{\prime\prime}$, we may assume $\pi^{\prime \prime}(\alpha_s^{\prime \prime}) = \alpha_s$, $\forall s \in S$.
  For any $r, t \in S$ with $r \ne t$, we must have $k_{rt} = l_{rt}$ since $\mathbb{C} \langle \alpha_r^{\prime \prime}, \alpha_t^{\prime \prime} \rangle$ and $\mathbb{C} \langle \alpha_r, \alpha_t \rangle$ should be isomorphic as representations of $\langle r, t \rangle$.

  Consider the following equalities,
  \begin{align*}
    \alpha_t + 2 \frac{a_r^t}{a_t^r} \cos \frac{k_{rt}\uppi}{m_{rt}} \alpha_r & = r \cdot \alpha_t \\
     & = r \cdot \pi^{\prime\prime} (\alpha_t^{\prime\prime}) \\
     & = \pi^{\prime\prime} (r \cdot \alpha_t^{\prime\prime})\\
     & = \pi^{\prime\prime} (\alpha_t^{\prime\prime} + 2 \frac{b_r^t}{b_t^r} \cos \frac{k_{rt}\uppi}{m_{rt}} \alpha_r^{\prime\prime}) \\
     & = \alpha_t + 2 \frac{b_r^t}{b_t^r} \cos \frac{k_{rt}\uppi}{m_{rt}} \alpha_r.
  \end{align*}
  If $k_{rt} \neq \frac{m_{rt}}{2}$, then we must have $\frac{a_r^t}{a_t^r} = \frac{b_r^t}{b_t^r}$.
  Thus, the two data $(k_{rt}, a_r^t)_{r,t \in S, r \ne t}$ and $(k_{rt}, b_r^t)_{r,t \in S, r \ne t}$ define the same character of $H_1 (\widetilde{G})$, where $\widetilde{G}$ is the graph associated with $(k_{rt})_{r,t \in S, r \ne t}$.
  By Theorem \ref{thm-IR}, we have $V^\prime \simeq V^{\prime \prime}$ as $W$-representations.
  This proves the ``uniqueness'' part of \eqref{5.6.1}.

  Let $S = \sqcup_{i \in I} S_i$ be the decomposition of $\widetilde{G}$ into connected components.
  Then $V^\prime = \bigoplus_{i \in I} V_i^\prime$, where $V_i^\prime = \bigoplus_{s \in S_i} \mathbb{C} \alpha_s^\prime$ is a subrepresentation, as in Lemma \ref{lem-decomp-into-comp}.
  Denote by $V_{i,0}^\prime$ the maximal subrepresentation (may be zero) in $V_i^\prime$.
  The action of $W$ on each $V_{i,0}^\prime$ is the trivial action (see Proposition \ref{prop-reducibility}).
  Note that $\ker \pi$ is a subrepresentation in $V^\prime$ but $\ker \pi$ can not contain any $V_i^\prime$.

  Suppose $0 \neq v \in \ker \pi$.
  We write $v = \oplus_{i \in I} v_i$ where $v_i \in V_i^\prime$.
  Take arbitrarily $j \in I$ and $s \in S_j$.
  For any $i \neq j$, we have $s \cdot v_i = v_i$ by Lemma \ref{lem-decomp-into-comp}.
  If $s \cdot v_j \neq v_j$, then $v_j - s \cdot v_j = v - s \cdot v \in \ker \pi$.
  By Lemma \ref{lem-refl}, we have $\alpha_s^\prime \in \ker \pi$, which is ridiculous.
  Thus $s \cdot v = v$, $\forall s\in S$.
  More specifically, $v_i \in V_{i,0}^\prime$, $\forall i \in I$.
  Hence,  $W$ acts on $\ker \pi$ trivially.
  The proof of \eqref{5.6.1} is complete.

  Let $V_0^\prime := \bigoplus_{i \in I} V_{i,0}^\prime$ be the maximal subrepresentation of $V^\prime$ with trivial $W$-action.
  We have shown that $\ker \pi \subseteq V_0^\prime$.
  By Corollary \ref{cor-reducibility}, the quotient $V^\prime / V_0^\prime$ is a semisimple representation of $W$.
  Moreover, $V^\prime / V_0^\prime$ is an R-representation since $\alpha_s^\prime \notin V_0^\prime$, $\forall s \in S$.
  On the other hand, if $\ker \pi \subsetneq V_0^\prime$, then there exists $j \in I$ such that $V_{j,0}^\prime \nsubseteq \ker \pi$.
  It follows that $\pi(V_j^\prime)$ is an indecomposable but reducible subrepresentation of $V$.
  Thus $V$ is not semisimple.
  This proves \eqref{5.6.3}.

  At last, \eqref{5.6.2} is deduced from \eqref{5.6.1} and \eqref{5.6.3}.
\end{proof}

\begin{remark}
    Let $v = \oplus_i v_i \in \ker \pi$ as in the proof.
    It may happen that $v_i \notin \ker \pi$ for some $i$.
    In fact, for any subspace $K \subseteq \bigoplus_i V_{i,0}^\prime$, the quotient $V^\prime / K$ is an R-representation of $W$.
\end{remark}

\begin{corollary} \label{cor-R1}
  Let \[\text{$\pi_1 : V^\prime \twoheadrightarrow V_1$, $\pi_2 : V^\prime \twoheadrightarrow V_2$}\] as in Theorem \ref{thm-R1}, where $V^\prime$ is a generalized geometric representation of $W$ and $V_1, V_2$ are R-representations.
  Suppose the associated graph $\widetilde{G}$ is connected.
  Then $V_1 \simeq V_2$ if and only if $\ker \pi_1 = \ker \pi_2$.
\end{corollary}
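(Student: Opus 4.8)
The plan is to reduce the whole statement to a single fact about the generalized geometric representation $V^\prime$: up to an overall nonzero scalar, a surjection from $V^\prime$ onto an R-representation is essentially unique, so its kernel is pinned down by the isomorphism class of the target. The easy direction is immediate. If $\ker\pi_1 = \ker\pi_2 =: K$, then $V_1$ and $V_2$ are both the quotient $V^\prime / K$ obtained from quotient maps with the same kernel, so $V_1 \simeq V_2$ as $W$-representations.

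For the converse, suppose $\varphi\colon V_1 \xrightarrow{\sim} V_2$ is an isomorphism and set $\Phi := \varphi\circ\pi_1\colon V^\prime \twoheadrightarrow V_2$. Since $\varphi$ is injective, $\ker\Phi = \ker\pi_1$, so it suffices to prove $\ker\Phi = \ker\pi_2$. Write $V^\prime = \bigoplus_{s}\mathbb{C}\alpha_s^\prime$ and normalize the reflection vectors of the targets by $\alpha_s^{(i)} := \pi_i(\alpha_s^\prime)$ for $i = 1,2$. Because $\Phi$ is $W$-equivariant, $\Phi(\alpha_s^\prime)$ lies in the $(-1)$-eigenspace of $s$ on $V_2$, which is one dimensional and spanned by $\alpha_s^{(2)}$ (each $s$ acts as a reflection); hence $\Phi(\alpha_s^\prime) = c_s\,\alpha_s^{(2)}$ for some scalars $c_s$.

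The crux is to show that $c_s$ is constant in $s$, and here I would argue exactly as in the proofs of Lemma \ref{lem-chi1=2} and Lemma \ref{lem-inj}. Since $\pi_2$ is equivariant, both $V^\prime$ and $V_2$ carry the same datum under the chosen normalization, so for an edge $\{r,t\}$ of $\widetilde{G}$ (whence $k_{rt} < \tfrac{m_{rt}}{2}$ and $\cos\tfrac{k_{rt}\uppi}{m_{rt}} \ne 0$) one has $r\cdot\alpha_t^{(2)} = \alpha_t^{(2)} + 2\tfrac{a_r^t}{a_t^r}\cos\tfrac{k_{rt}\uppi}{m_{rt}}\,\alpha_r^{(2)}$. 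Applying $\Phi$ to $r\cdot\alpha_t^\prime = \alpha_t^\prime + 2\tfrac{a_r^t}{a_t^r}\cos\tfrac{k_{rt}\uppi}{m_{rt}}\,\alpha_r^\prime$ and comparing with $r\cdot\Phi(\alpha_t^\prime)$, the $\alpha_t^{(2)}$-components agree automatically and the $\alpha_r^{(2)}$-components force $c_r = c_t$ after cancelling the nonzero factor $\tfrac{a_r^t}{a_t^r}\cos\tfrac{k_{rt}\uppi}{m_{rt}}$. As $\widetilde{G}$ is connected, all $c_s$ share a common value $\lambda$, and $\lambda \ne 0$ since $\Phi$ is surjective (otherwise $\Phi = 0$). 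Thus $\Phi = \lambda\,\pi_2$ on the spanning set $\{\alpha_s^\prime\}$, hence as linear maps, so $\ker\Phi = \ker\pi_2$; combining with $\ker\Phi = \ker\pi_1$ gives $\ker\pi_1 = \ker\pi_2$.

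The only genuine obstacle is the constancy of $c_s$; conceptually this is the assertion that $\operatorname{Hom}_W(V^\prime, V_2)$ is one dimensional, a relative analogue of Corollary \ref{cor-end-IR}, and it is precisely the edge-by-edge Schur-type relation above, propagated through the connected graph $\widetilde{G}$, that establishes it. Everything else is routine bookkeeping with the quotient maps and their kernels.
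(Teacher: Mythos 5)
Your proof is correct and takes essentially the same route as the paper: the paper handles the easy direction identically and, for the converse, invokes $\operatorname{Hom}_W(V^\prime, V_2) \simeq \mathbb{C}$ (established by the same Schur-lemma propagation along the connected graph $\widetilde{G}$ that you spell out edge by edge, citing the proofs of Corollary \ref{cor-end-IR} and Lemma \ref{lem-inj}) to conclude $\varphi \pi_1 = c\,\pi_2$ with $c \in \mathbb{C}^\times$ and hence $\ker \pi_1 = \ker \pi_2$. Your explicit verification that the scalars $c_s$ are constant across edges of $\widetilde{G}$ is precisely the content of that citation, just unrolled.
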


\begin{proof}
  By the same proof of Corollary \ref{cor-end-IR}, we have
  \begin{equation}   \label{eq-cor-R1-1}
    \operatorname{End}_W(V_i) \simeq \mathbb{C}, \text{ and } \operatorname{Hom}_W(V^\prime, V_i) \simeq \mathbb{C}, \text{ for } i = 1,2.
  \end{equation}
  If $\ker \pi_1 = \ker \pi_2$, then obviously $V_1 \simeq V_2$.
  Conversely, suppose  $\phi: V_1 \xrightarrow{\sim} V_2$ is an isomorphism.
  Then $\phi \pi_1: V^\prime \twoheadrightarrow V_2$ is a surjective homomorphism.
  By \eqref{eq-cor-R1-1}, $\phi \pi_1 = c \pi_2$ for some $c \in \mathbb{C}^\times$.
  It follows that $\ker \pi_2  = \ker \pi_1$.
\end{proof}

By Corollary \ref{cor-R1}, if $V^\prime$ is a generalized geometric representation with connected associated graph $\widetilde{G}$, then the isomorphism classes of R-representations which are quotients of $V^\prime$ are parameterized by subspaces of the maximal subrepresentation of $V^\prime$ (a union of Grassmannians).

When $\widetilde{G}$ is not connected, the quotients of a generalized geometric representation by two different subspaces may be isomorphic.
The following is such an example.

\begin{example}
Suppose $W$ is of type $\widetilde{\mathsf{A}}_2 \times \widetilde{\mathsf{A}}_2$, i.e., the Coxeter graph of $W$ is as follows,
\begin{equation*}
    \begin{tikzpicture}
      \node [circle, draw, inner sep=2pt, label=left:$s_1$] (s1) at (-1.5,1) {};
      \node [circle, draw, inner sep=2pt, label=below:$s_2$] (s2) at (-2.5,0) {};
      \node [circle, draw, inner sep=2pt, label=below:$s_3$] (s3) at (-0.5,0) {};
      \draw (s3) -- (s1) -- (s2) -- (s3);
      \node [circle, draw, inner sep=2pt, label=left:$s_4$] (s4) at (1.5,1) {};
      \node [circle, draw, inner sep=2pt, label=below:$s_5$] (s5) at (0.5,0) {};
      \node [circle, draw, inner sep=2pt, label=below:$s_6$] (s6) at (2.5,0) {};
      \draw (s4) -- (s5) -- (s6) -- (s4);
    \end{tikzpicture}
\end{equation*}
The  geometric representation $\Vg$  is a direct sum of two copies of the geometric representation of $\widetilde{\mathsf{A}}_2$.
Let $v_1 := \alpha_1 + \alpha_2 + \alpha_3$, $v_2 := \alpha_4 + \alpha_5 + \alpha_6$.
Then $V_0 := \mathbb{C}\langle v_1, v_2 \rangle$ is the maximal subrepresentation in $\Vg$ with trivial $W$-action.
The two quotients $\Vg/\mathbb{C} v_1$ and $\Vg/\mathbb{C} v_2$ are clearly non-isomorphic.
However, let $V_1 := \Vg/\mathbb{C} (v_1 + v_2)$, $V_2 := \Vg/\mathbb{C} (v_1 + 2v_2)$, then the map $\alpha_i \mapsto \alpha_i, i = 1,2,3$, $\alpha_j \mapsto 2 \alpha_j, j = 4,5,6$ defines an isomorphism from $V_1$ to $V_2$.
\end{example}

\subsection{\texorpdfstring{$W$}{W}-invariant bilinear form} \label{subsec-bil}
Recall that there is a nonzero bilinear form $(-|-)$ on the geometric representation $\Vg$ of $W$ such that $(-|-)$ stays invariant under the action of $W$ (see Equations \eqref{eq-B-Vg} to \eqref{eq-Vg-W-inv}).
Under this bilinear form, $W$ acts like an orthogonal reflection group on $\Vg$.
Moreover, it can be shown that if $(W,S)$ is irreducible then such a bilinear form is unique up to a scalar.
It is also interesting to ask whether such a bilinear form exists on generalized geometric representations.

\begin{theorem}\label{prop-bil}
  Let $V = \bigoplus_{s \in S} \mathbb{C} \alpha_s$ be a generalized geometric representation of $W$ defined by a GGR-datum $(k_{rt}, a_r^t)_{r,t \in S, r \ne t}$, and $\chi$ be the associated character. Assume the associated graph $\widetilde{G}$ is connected.
  \begin{enumerate}
    \item \label{prop-bil-1} There is a nonzero $W$-invariant bilinear form on $V$ if and only if \[\chi (H_1 (\widetilde{G})) \subseteq \{\pm 1\}.\]
    \item \label{prop-bil-2} There is a nonzero $W$-invariant sesquilinear form on $V$ if and only if \[\chi (H_1 (\widetilde{G})) \subseteq S^1 := \{z \in \mathbb{C} \mid \abs{z} = 1\}.\]
  \end{enumerate}
\end{theorem}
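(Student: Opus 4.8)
The plan is to translate $W$-invariance into linear conditions on the Gram matrix $B=(b_{st})_{s,t\in S}$, where $b_{st}:=(\alpha_s\mid\alpha_t)$ (resp. $[\alpha_s\mid\alpha_t]$), and then read off exactly when a nonzero solution exists. First I would reduce $W$-invariance to invariance under the generators: since $\{\alpha_s\}_{s\in S}$ is a basis and $W=\langle S\rangle$, a bilinear or sesquilinear form is $W$-invariant if and only if $(s\cdot\alpha_u\mid s\cdot\alpha_v)=(\alpha_u\mid\alpha_v)$ for all $s,u,v\in S$. Using the explicit action \eqref{eq-IR-action}, write $s\cdot\alpha_u=\alpha_u+c_{su}\alpha_s$ with $c_{su}=2\frac{a_s^u}{a_u^s}\cos\frac{k_{su}\uppi}{m_{su}}$; note $c_{su}=0$ precisely when $\{s,u\}$ is not an edge of $\widetilde G$. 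The invariance equations then split into cases according to how many of $u,v$ equal $s$.

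The heart of the argument is the two-dimensional analysis on each $\mathbb{C}\langle\alpha_r,\alpha_t\rangle$. For an edge $\{r,t\}$ of $\widetilde G$ (so $c_{rt}\ne0$), invariance under $r$ and under $t$ forces the off-diagonal entries to be determined by the diagonal and imposes one compatibility relation between $b_{rr}$ and $b_{tt}$. In the bilinear case a direct computation gives $b_{rt}=b_{tr}=-\tfrac{c_{rt}}{2}b_{rr}=-\tfrac{c_{tr}}{2}b_{tt}$, whence
\[
  \frac{b_{rr}}{b_{tt}} \;=\; \frac{c_{tr}}{c_{rt}} \;=\; \Bigl(\frac{a_t^r}{a_r^t}\Bigr)^{2} \;=\; \eta\bigl((r,t)\bigr)^{2},
\]
where $\eta:C_1(\widetilde G)\to\mathbb{C}^\times$ is the character $(r,t)\mapsto a_t^r/a_r^t$ from \eqref{eq-4.3} whose restriction to $H_1(\widetilde G)$ is $\chi$. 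For a non-edge ($c_{rt}=0$) the same computation yields $b_{rt}=b_{tr}=0$ and no constraint on the diagonal. In the sesquilinear case the only change is the conjugation in the second slot, which produces $b_{rt}=-\tfrac{\overline{c_{rt}}}{2}b_{rr}$, $b_{tr}=-\tfrac{c_{rt}}{2}b_{rr}$, and, since $\cos\frac{k_{rt}\uppi}{m_{rt}}\in\mathbb{R}$, the relation $b_{rr}/b_{tt}=\overline{c_{tr}}/c_{rt}=\abs{\eta((r,t))}^{2}$.

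Next I would check that the remaining equations, those with $u,v\ne s$ and $u\ne v$, hold automatically once the off-diagonals are fixed as above: substituting $b_{su}=-\tfrac{c_{su}}{2}b_{ss}$, $b_{sv}=-\tfrac{c_{sv}}{2}b_{ss}$ makes each such equation collapse to $0=0$ (and the conjugates match up the same way in the sesquilinear case). Consequently a nonzero $W$-invariant form exists if and only if one can choose a nowhere-zero diagonal $(b_{ss})_{s\in S}$ satisfying the edge ratios above, the off-diagonal entries then being forced. Since $\widetilde G$ is connected, I would fix a base vertex, set its diagonal entry to $1$, and propagate the prescribed ratios along a spanning tree; this determines all $b_{ss}\ne0$, and the sole obstruction is consistency around closed paths. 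By Lemma \ref{lem-2.12} the fundamental circuits generate $H_1(\widetilde G)$, and around a circuit $c$ the telescoping product of diagonal ratios equals $\chi(c)^{2}$ in the bilinear case and $\abs{\chi(c)}^{2}$ in the sesquilinear case. Hence a consistent nonzero diagonal exists exactly when $\chi(c)^{2}=1$ for all $c\in H_1(\widetilde G)$, i.e. $\chi(H_1(\widetilde G))\subseteq\{\pm1\}$ in case \eqref{prop-bil-1}, and exactly when $\abs{\chi(c)}=1$ for all $c$, i.e. $\chi(H_1(\widetilde G))\subseteq S^1$ in case \eqref{prop-bil-2}; this gives both directions at once.

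I expect the difficulty to be bookkeeping rather than conceptual. The two delicate points are: (i) verifying that the ``triangle'' equations (the $u,v\ne s$, $u\ne v$ case) impose no condition beyond the edge relations, which must be done with the conjugations placed correctly in the sesquilinear setting; and (ii) matching the telescoping product of diagonal ratios around a circuit with the value of $\chi$ on that circuit, keeping careful track of orientations so that the square (resp. the absolute value) appears. Once these are in hand, connectedness of $\widetilde G$ together with Lemma \ref{lem-2.12} packages the local constraints into the stated criterion on $\chi(H_1(\widetilde G))$.
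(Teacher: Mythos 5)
Your proposal is correct and follows essentially the same route as the paper's proof: the same edge-local two-dimensional analysis (where the paper invokes Lemma \ref{lem-2.7} and Lemma \ref{lem-fdih-sesqui}, you recompute the forced off-diagonals directly), the same check that the ``triangle'' equations $(s\cdot\alpha_u \mid s\cdot\alpha_v)=(\alpha_u\mid\alpha_v)$ for distinct $s,u,v$ collapse to $0=0$, and the same propagation of diagonal entries from a base vertex with circuit consistency handled via Lemma \ref{lem-2.12}. The only step you leave implicit is the ``only if'' half of your equivalence with a nowhere-zero diagonal (the paper's claim that $x \ne 0$), but it follows at once from your division-free edge relation $c_{rt}b_{rr}=c_{tr}b_{tt}$ together with connectedness of $\widetilde{G}$, since one vanishing diagonal entry would force every entry of $B$ to vanish.
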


\begin{proof}
Suppose $(-|-)$ is a nonzero $W$-invariant bilinear form on $V$.
For an arbitrary path $(s_1, s_2, \dots, s_n)$ in $\widetilde{G}$, suppose $(\alpha_1 | \alpha_1) = x$ (here $\alpha_i := \alpha_{s_i}$, similarly $a_i^j := a_{s_i}^{s_j}$, $k_{ij} := k_{s_1s_2}$).
Note that $\beta_{s_1} \mapsto a_1^2 \alpha_1$, $\beta_{s_2} \mapsto a_2^1 \alpha_2$ is an isomorphism of $\langle s_1, s_2 \rangle$-representations from $\rho_{k_{12}}$ to the one spanned by $\alpha_1, \alpha_2$.
Then by Lemma \ref{lem-2.7}, we have
\begin{align*}
   (\alpha_1 | \alpha_1) = x & \xLongrightarrow{\phantom{\text{ Lemma \ref{lem-2.7} }}} (a_1^2 \alpha_1 | a_1^2 \alpha_1) = x (a_1^2)^2  \\
   & \xLongrightarrow{\text{ Lemma \ref{lem-2.7} }} (a_2^1 \alpha_2 | a_2^1 \alpha_2) = x(a_1^2)^2 \\
   & \xLongrightarrow{\phantom{\text{ Lemma \ref{lem-2.7} }}} (\alpha_2 | \alpha_2) = x (\frac{a_1^2}{a_2^1})^2.
\end{align*}
Do this argument recursively along the path $(s_1, \dots, s_n)$, then we get
\begin{equation} \label{eq-6.1}
    (\alpha_n|\alpha_n) = x \left( \frac{a_1^2 a_2^3 \cdots a_{n-1}^n}{a_2^1 a_3^2 \cdots a_n^{n-1}} \right)^2.
\end{equation}

We claim that $x \ne 0$.
Otherwise, we have $(\alpha_s|\alpha_s) = 0$ for any $s \in S$ since $\widetilde{G}$ is connected.
As a result, if $s,t \in S$ such that $\widetilde{m}_{st} \ge 3$, then $(\alpha_s|\alpha_t) = (\alpha_t| \alpha_s) = 0$ by Lemma \ref{lem-2.7}.
While if $\widetilde{m}_{st} =2$, then $(\alpha_s|\alpha_t) = (s \cdot \alpha_s| s\cdot \alpha_t) = - (\alpha_s| \alpha_t)$, and hence $(\alpha_s|\alpha_t) = 0$.
Therefore $(-|-) = 0$ which is a contradiction. Thus $x \ne 0$.

Suppose now $s_1 = s_n$. 
Then the path $(s_1, \dots, s_n)$ is a closed path, and $(\alpha_n| \alpha_n) = x$.
Equation \eqref{eq-6.1} implies that
\begin{equation*}
  \chi((s_1, \dots, s_n)) = \frac{a_2^1 a_3^2 \cdots a_n^{n-1}}{a_1^2 a_2^3 \cdots a_{n-1}^n} = \pm 1.
\end{equation*}
Thus, the image of $\chi$ is contained in $\{\pm 1\}$.

Conversely, suppose $\chi (H_1 (\widetilde{G})) \subseteq \{\pm 1\}$.
Choose and fix a vertex $s_1$ in $\widetilde{G}$.
We define $(\alpha_1|\alpha_1) := 1$.
For another vertex $s$, choose a path $(s_1, s_2, \dots, s_n = s)$ in $\widetilde{G}$ connecting $s_1$ and $s$.
In view of Lemma \ref{lem-2.7}, we define
\begin{equation*}
    (\alpha_s|\alpha_s) := \left( \frac{a_1^2 a_2^3 \cdots a_{n-1}^n}{a_2^1 a_3^2 \cdots a_n^{n-1}} \right)^2.
\end{equation*}
We claim that $(\alpha_s|\alpha_s)$ is independent of the choice of the path $(s_1, s_2, \dots, s_n = s)$.
If there is another path, say $(s_1, s_p, s_{p-1}, \dots, s_{n+1}, s_n)$, $p \ge n$, connecting $s_1$ and $s$, then the two paths connected end-to-end form a closed path 
\[(s_1, \dots, s_n, s_{n+1}, \dots, s_p, s_1).\] 
Our assumption says
\begin{equation*}
    \chi ((s_1, \dots, s_n, s_{n+1}, \dots, s_p, s_1)) = \frac{a_2^1 \cdots a_n^{n-1} a_{n+1}^n \cdots a_p^{p-1} a_1^p}{a_1^2 \cdots a_{n-1}^n a_n^{n+1} \cdots a_{p-1}^p a_p^1} = \pm 1.
\end{equation*}
Thus
\begin{equation*}
  \left( \frac{a_1^2 a_2^3 \cdots a_{n-1}^n}{a_2^1 a_3^2 \cdots a_n^{n-1}} \right)^2 = \left( \frac{a_1^p a_p^{p-1} \cdots a_{n+1}^n}{a_p^1 a_{p-1}^p \cdots a_n^{n+1}} \right)^2.
\end{equation*}
This suggests that $(\alpha_s|\alpha_s)$ is independent of the choice of the path.

Moreover, for $t \neq s$, we define
\begin{equation}   \label{eq-bil-2}
    (\alpha_t| \alpha_s) = (\alpha_s|\alpha_t) := - \frac{a_s^t}{a_t^s} \cos \frac{k_{st} \uppi}{m_{st}} (\alpha_s| \alpha_s). 
\end{equation}
(The right hand side also equals to $- \frac{a_t^s}{a_s^t} \cos \frac{k_{st} \uppi}{m_{st}}  (\alpha_t| \alpha_t)$, for, if $\{s,t\}$ is not an edge in $\widetilde{G}$, then $\cos \frac{k_{st} \uppi}{m_{st}} = 0$; if $\{s,t\}$ is an edge in $\widetilde{G}$, then $(s_1, \dots, s_{n-1}, s, t)$ is a path in $\widetilde{G}$ connecting $s_1$ and $t$, and thus $(\alpha_t| \alpha_t) = (\frac{a_s^t}{a_t^s})^2 (\alpha_s| \alpha_s)$.)

Now $(-|-)$ is a well defined bilinear form on $V$. 
We need to check that $(-|-)$ is $W$-invariant. It suffices to check
\begin{equation}  \label{eq-bil-1}
  (s \cdot \alpha_r| s \cdot \alpha_t) =  (\alpha_r| \alpha_t), \quad \forall s, r, t \in S.
\end{equation}
If $s = r = t$, this is obvious.
If $s = r \neq t$, or $s = t \neq r$, or $s \neq r = t$, then everything happens in a dihedral world, and Equation \eqref{eq-bil-1} holds by Lemma \ref{lem-2.7}.
If $s, r, t$ are distinct, then
\begin{align*}
     (s \cdot \alpha_r|s \cdot \alpha_t) & =  \Bigl(\alpha_r + 2 \frac{a_s^r}{a_r^s} \cos \frac{k_{sr} \uppi}{m_{sr}} \alpha_s \Bigm\vert \alpha_t + 2 \frac{a_s^t}{a_t^s} \cos \frac{k_{st} \uppi}{m_{st}} \alpha_s\Bigr) \\
    & =  (\alpha_r| \alpha_t) + 2 \frac{a_s^t}{a_t^s} \cos \frac{k_{st} \uppi}{m_{st}} (\alpha_r| \alpha_s) + 2 \frac{a_s^r}{a_r^s} \cos \frac{k_{sr} \uppi}{m_{sr}}  (\alpha_s| \alpha_t) \\*
    & \mathrel{\phantom{=}} {} + 4 \frac{a_s^t a_s^r}{a_t^s a_r^s} \cos \frac{k_{st} \uppi}{m_{st}} \cos \frac{k_{sr} \uppi}{m_{sr}} (\alpha_s| \alpha_s)\\*
    & =  (\alpha_r| \alpha_t).
\end{align*}
The last equality is by Equation \eqref{eq-bil-2}.

The proof of \eqref{prop-bil-2} is similar (note that we have Lemma \ref{lem-fdih-sesqui}).
\end{proof}

From the proof of Theorem \ref{prop-bil}, we can obtain the following result.

\begin{corollary}
Let $V$ be a generalized geometric representation of $W$.
Suppose the associated graph $\widetilde{G}$ is connected and $V$ admits a nonzero $W$-invariant bilinear form $(-|-)$.
  \begin{enumerate}
    \item Such a bilinear form is symmetric and is unique up to a $\mathbb{C}^\times$-scalar.
    \item For any $s \in S$, the action of $s$ on $V$ is an orthogonal reflection with respect to $(-|-)$, i.e.,
        \begin{equation*}
          s \cdot \alpha_t = \alpha_t - \frac{2(\alpha_t|\alpha_s)}{(\alpha_s | \alpha_s)} \alpha_s, \quad \forall t \in S.
        \end{equation*}
  \end{enumerate}
\end{corollary}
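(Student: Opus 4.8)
The plan is to extract everything from the machinery already set up in the proof of Theorem~\ref{prop-bil}, together with the dihedral uniqueness statement in Lemma~\ref{lem-2.7}. Throughout I fix a nonzero $W$-invariant bilinear form $(-|-)$ on $V$. The first observation I would record is that all diagonal entries $(\alpha_s|\alpha_s)$ are nonzero: this is exactly the claim ``$x \ne 0$'' proved in Theorem~\ref{prop-bil}, where $x = (\alpha_{s_1}|\alpha_{s_1})$ for a base vertex $s_1$, and connectedness of $\widetilde{G}$ together with Equation~\eqref{eq-6.1} forces $(\alpha_s|\alpha_s)$ to be a nonzero scalar multiple of $x$ for every $s \in S$.

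For the symmetry assertion in (1) I would argue edge by edge. If $\{s,t\}$ is an edge of $\widetilde{G}$, i.e.\ $\widetilde{m}_{st} \ge 3$, then the restriction of $(-|-)$ to the two-dimensional subspace $\mathbb{C}\langle \alpha_s, \alpha_t \rangle$ is a $\langle s,t \rangle$-invariant form which is nonzero (since $(\alpha_s|\alpha_s) \ne 0$) on a representation isomorphic to $\rho_{k_{st}}$ with $k_{st} < \frac{m_{st}}{2}$; Lemma~\ref{lem-2.7} then says this restriction is symmetric and, transporting along the isomorphism $\beta_s \mapsto a_s^t \alpha_s$, $\beta_t \mapsto a_t^s \alpha_t$, yields the relation recorded in Equation~\eqref{eq-bil-2}, namely $(\alpha_s|\alpha_t) = -\frac{a_s^t}{a_t^s} \cos \frac{k_{st}\uppi}{m_{st}} (\alpha_s|\alpha_s)$. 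If $\{s,t\}$ is not an edge, so $k_{st} = \frac{m_{st}}{2}$ and $s \cdot \alpha_t = \alpha_t$, $t \cdot \alpha_s = \alpha_s$, then invariance under $s$ gives $(\alpha_s|\alpha_t) = (s\cdot\alpha_s | s\cdot\alpha_t) = -(\alpha_s|\alpha_t)$, hence $(\alpha_s|\alpha_t) = 0 = (\alpha_t|\alpha_s)$. In either case $(\alpha_s|\alpha_t) = (\alpha_t|\alpha_s)$, so $(-|-)$ is symmetric.

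Uniqueness up to a scalar then follows by repeating the determination carried out in Theorem~\ref{prop-bil}: once $x = (\alpha_{s_1}|\alpha_{s_1})$ is fixed, every diagonal entry $(\alpha_s|\alpha_s)$ is forced by Equation~\eqref{eq-6.1} (path-independence being guaranteed by $\chi(H_1(\widetilde{G})) \subseteq \{\pm 1\}$, which holds because a nonzero invariant form exists, by Theorem~\ref{prop-bil}(1)), and every off-diagonal entry is forced by the relation of Equation~\eqref{eq-bil-2} just established. Hence the entire Gram matrix is determined by the single number $x$, so any two nonzero $W$-invariant bilinear forms differ by a $\mathbb{C}^\times$-scalar.

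Finally, (2) is a direct substitution. Using the relation $(\alpha_t|\alpha_s) = -\frac{a_s^t}{a_t^s} \cos \frac{k_{st}\uppi}{m_{st}} (\alpha_s|\alpha_s)$ from Equation~\eqref{eq-bil-2} and the nonvanishing of $(\alpha_s|\alpha_s)$, one computes $-\frac{2(\alpha_t|\alpha_s)}{(\alpha_s|\alpha_s)} = 2 \frac{a_s^t}{a_t^s} \cos \frac{k_{st}\uppi}{m_{st}}$, which is exactly the coefficient of $\alpha_s$ appearing in the defining action $s \cdot \alpha_t = \alpha_t + 2\frac{a_s^t}{a_t^s} \cos \frac{k_{st}\uppi}{m_{st}} \alpha_s$ of Equation~\eqref{eq-IR-action}; thus $s \cdot \alpha_t = \alpha_t - \frac{2(\alpha_t|\alpha_s)}{(\alpha_s|\alpha_s)} \alpha_s$ for all $t$. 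I expect no serious obstacle here: the only points requiring care are verifying that the restriction to each edge subspace is nonzero (so that Lemma~\ref{lem-2.7} applies) and that the off-diagonal relation \eqref{eq-bil-2} holds for the \emph{given} form rather than merely for the one constructed in Theorem~\ref{prop-bil} — both of which are settled by the arguments above.
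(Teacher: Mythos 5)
Your proposal is correct and matches the paper's intended argument: the paper gives no separate proof but states the corollary is obtained ``from the proof of Theorem~\ref{prop-bil}'', and your write-up is precisely that extraction --- nonvanishing of the diagonal entries, symmetry and the off-diagonal relation \eqref{eq-bil-2} via Lemma~\ref{lem-2.7} on edges of $\widetilde{G}$ (with the $\widetilde{m}_{st}=2$ case handled by the sign computation), uniqueness by noting the whole Gram matrix is forced by $x=(\alpha_{s_1}|\alpha_{s_1})$ through \eqref{eq-6.1} and \eqref{eq-bil-2}, and part (2) by direct comparison with \eqref{eq-IR-action}. No gaps; the two points you flag at the end (nonzero restriction to edge subspaces, and validity of \eqref{eq-bil-2} for the given form) are indeed the only delicate steps, and you settle both.
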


\begin{remark} \leavevmode
  \begin{enumerate}
    \item If $\widetilde{G}$ is not assumed to be connected, and we write $V = \bigoplus_i V_i$ as in Lemma \ref{lem-decomp-into-comp}, then Theorem \ref{prop-bil} can be applied to each $V_i$.
    \item By Theorem \ref{prop-bil}, there are only finitely many  generalized geometric representations of $W$ admitting a nonzero $W$-invariant bilinear form. The geometric representation $\Vg$ is one of them.
  \end{enumerate}
\end{remark}

In the rest of this subsection, we discuss bilinear forms on R-representations.
Let $V$ be a generalized geometric representation with connected $\widetilde{G}$.
Suppose $V$ is not irreducible, and $V_0$ is the maximal subrepresentation.

Assume first that $V$ admits a nonzero $W$-invariant bilinear form $(-|-)$.
Let $v = \sum_{s \in S} x_s \alpha_s \in V_0$ ($x_s \in \mathbb{C}$).
Then
\begin{equation} \label{eq-6.4}
  \begin{aligned}
     (v| \alpha_r) & = x_r  (\alpha_r| \alpha_r) + \sum_{s \neq r} x_s (\alpha_s| \alpha_r) \\
     & = x_r  (\alpha_r| \alpha_r) - \sum_{s \neq r} x_s \frac{a_r^s}{a_s^r} \cos \frac{k_{sr} \uppi}{m_{sr}} (\alpha_r| \alpha_r) \\
      &= 0, \quad \forall r \in S.
  \end{aligned}
\end{equation}
The second equality is due to Lemma \ref{lem-2.7} and the third one is due to Equation \eqref{eq6.1}.
By Equation \eqref{eq-6.4}, we see that $V_0$ is contained in the radical of the symmetric bilinear form $(-|-)$.

Now let $V_1$ be an R-representation which is a quotient of $V$ and $\pi : V \twoheadrightarrow V_1$ be the projection. Then $\ker \pi \subseteq V_0$.
For any $x \in V_1$, denote by $\widetilde{x}$ an arbitrary vector in $V$ such that $\pi(\widetilde{x}) = x$.
Equation \eqref{eq-6.4} tells us that $(x|y)^\prime := (\widetilde{x}|\widetilde{y})$ is a well defined bilinear form on $V_1$.
Clearly, $(-|-)^\prime$ is also $W$-invariant.

Conversely, let $V_1$ be an R-representation with connected associated graph $\widetilde{G}$, and $V$ be the corresponding  generalized geometric representation.
Suppose $V_1$ has a nonzero $W$-invariant bilinear form $(-|-)$.
By pulling back alone the surjection $V \twoheadrightarrow V_1$, we obtain a nonzero $W$-invariant bilinear form on $V$.

To conclude, we have the following proposition.

\begin{proposition}
  Let $V_1$ be an R-representation of $W$ and $V$ be the corresponding  generalized geometric representation.
  Suppose the associated graph $\widetilde{G}$ is connected.
  Then $V_1$ admits a nonzero $W$-invariant bilinear form if and only if $V$ admits such a bilinear form.
\end{proposition}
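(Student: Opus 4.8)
The plan is to exploit the canonical link between the two representations, namely the $W$-equivariant surjection $\pi : V \twoheadrightarrow V_1$ furnished by Theorem~\ref{thm-R1}\eqref{5.6.1}. Recall that this $\pi$ has $W$ acting trivially on $\ker\pi$ and satisfies $\ker\pi \subseteq V_0$, where $V_0$ is the maximal subrepresentation of $V$ with trivial $W$-action. The idea is to \emph{transport} $W$-invariant bilinear forms across $\pi$: pulling a form back along $\pi$ gives one implication, and descending a form to the quotient gives the other. In fact both constructions are already carried out in the two paragraphs of discussion immediately preceding the statement, so the task is mainly to assemble them into the stated equivalence.

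First I would treat the easier (pullback) direction: assume $V_1$ carries a nonzero $W$-invariant bilinear form $(-|-)$ and define a form on $V$ by $(x|y)'' := (\pi(x)|\pi(y))$. Since $\pi$ intertwines the $W$-actions, $(w\cdot x|w\cdot y)'' = (\pi(w\cdot x)|\pi(w\cdot y)) = (w\cdot\pi(x)|w\cdot\pi(y)) = (\pi(x)|\pi(y)) = (x|y)''$, so $(-|-)''$ is $W$-invariant; it is nonzero because $\pi$ is surjective and $(-|-)$ is nonzero. Thus $V$ admits a nonzero $W$-invariant bilinear form. This half uses nothing beyond functoriality of the pullback.

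The other (descent) direction is where the real content lies, and it relies on the computation already recorded in Equation~\eqref{eq-6.4}: for any nonzero $W$-invariant bilinear form $(-|-)$ on $V$, the subspace $V_0$ lies in its radical. (That computation is exactly where the connectedness of $\widetilde{G}$, Lemma~\ref{lem-2.7}, and Equation~\eqref{eq6.1} are used, forcing $(v|\alpha_r)=0$ for all $v\in V_0$ and all $r\in S$.) Granting this, the inclusion $\ker\pi \subseteq V_0$ shows that $(-|-)$ vanishes whenever one argument lies in $\ker\pi$, so the form factors through $V_1 = V/\ker\pi$ and yields a well-defined bilinear form $(-|-)'$ on $V_1$ via $(\pi(x)|\pi(y))' := (x|y)$. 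This descended form is again $W$-invariant, and it is nonzero since otherwise $(-|-)$ would vanish on all of $V\times V$, contradicting $(-|-)\neq 0$.

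I expect the main obstacle to be precisely this verification that $\ker\pi$ sits inside the radical of the form, which is what makes descent legitimate while keeping the resulting form nonzero; everything else is formal transport along $\pi$. Since Equation~\eqref{eq-6.4} already establishes $V_0 \subseteq \operatorname{rad}(-|-)$ and $\ker\pi\subseteq V_0$ comes from Theorem~\ref{thm-R1}, the proposition reduces to combining these two facts with the two transport constructions above, and so is essentially a corollary of the preceding discussion.
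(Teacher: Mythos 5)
Your proposal is correct and follows essentially the same route as the paper, whose proof of this proposition is precisely the two paragraphs of discussion preceding it: the pullback along the surjection $V \twoheadrightarrow V_1$ handles one direction, and the descent direction rests on Equation \eqref{eq-6.4} showing $V_0$ (hence $\ker\pi$, by Theorem \ref{thm-R1}) lies in the radical of any nonzero $W$-invariant form on $V$. The only cosmetic difference is that you state explicitly why the descended form is nonzero, which the paper leaves implicit.
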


\subsection{Dual of generalized geometric representations} \label{subsec-dual}
In this subsection we consider the dual representation of a generalized geometric representation $V$.
Roughly speaking, if $V$ corresponds to datum $((k_{rt})_{r,t \in S, r \ne t}, \chi)$, then its dual $V^*$ is either a generalized geometric representation or contains an R-representation corresponding to the datum $((k_{rt})_{r,t \in S, r \ne t}, \chi^*)$.
Here $\chi^*$ is the dual character, $\chi^* (-) = \chi (-)^{-1}$.

In the rest of this subsection, let $V = \bigoplus_{s \in S} \mathbb{C} \alpha_s$ be a generalized geometric representation of $W$ defined by a GGR-datum $(k_{rt}, a_r^t)_{r,t \in S, r \ne t}$, and $\{\alpha_s^* \mid s \in S\}$ be the dual basis of $\{\alpha_s \mid s \in S\}$ of $V^* := \operatorname{Hom}_\mathbb{C} (V, \mathbb{C})$.
The dual space $V^*$ forms a representation of $W$ via
\begin{equation*}
  (s \cdot \alpha_t^*) (-) := \alpha_t^*( s \cdot -), \quad \forall s, t \in S.
\end{equation*}
A direct computation shows
\begin{align}
    s \cdot \alpha_s^* & = - \alpha_s^* + 2 \sum_{t \neq s}  \frac{a_s^t}{a_t^s} \cos \frac{k_{st} \uppi}{m_{st}} \alpha_t^*, \quad \forall s \in S; \label{eq6.5} \\
    s \cdot \alpha_t^* & = \alpha_t^*, \quad \forall s,t \in S,  t \neq s. \label{eq6.6}
\end{align}
From this, we can see that the action of $s$ on $V^*$ is a reflection, with  a reflection vector
\begin{equation} \label{eq-6.5}
    \gamma_s := \alpha_s^* - \sum_{t \neq s} \frac{a_s^t}{a_t^s} \cos \frac{k_{st} \uppi}{m_{st}} \alpha_t^*.
\end{equation}

\begin{lemma} \label{lem-dual-dih}
  Notations as above.
  \begin{enumerate}
    \item \label{lem-dual-1} For $r,t \in S$ with $r \ne t$, we have $r \cdot \gamma_t = \gamma_t + 2 \frac{a_t^r}{a_r^t} \cos \frac{k_{rt} \uppi}{m_{rt}} \gamma_r$.
    \item \label{lem-dual-2} For $r,t \in S$ with $r \ne t$, the vectors $\gamma_r$ and $\gamma_t$ are linearly independent.
  \end{enumerate}
\end{lemma}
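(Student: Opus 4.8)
The plan is to handle the two parts using the single structural fact, noted just before the statement, that $r$ acts on $V^*$ as a reflection with reflection vector $\gamma_r$. This lets me bring Lemma~\ref{lem-refl} to bear on the space $V^*$ rather than recomputing the dual action from scratch, and it reduces both assertions to very short bookkeeping.

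For part~\eqref{lem-dual-1}, I would first apply Lemma~\ref{lem-refl} to the reflection $r$ on $V^*$ and the vector $\gamma_t$. This gives $\gamma_t - r\cdot\gamma_t \in \mathbb{C}\gamma_r$, so that
\[
  r \cdot \gamma_t = \gamma_t + \lambda\, \gamma_r
\]
for a unique $\lambda \in \mathbb{C}$, and it only remains to pin down $\lambda$. Rather than expanding \eqref{eq6.5}, I would evaluate both sides of this identity on the vector $\alpha_r \in V$, using the dual action $(r\cdot f)(v) = f(r\cdot v)$ together with $r\cdot\alpha_r = -\alpha_r$. On the left this gives $(r\cdot\gamma_t)(\alpha_r) = \gamma_t(-\alpha_r) = -\gamma_t(\alpha_r)$, while the defining expression \eqref{eq-6.5} immediately yields $\gamma_t(\alpha_r) = -\frac{a_t^r}{a_r^t}\cos\frac{k_{rt}\uppi}{m_{rt}}$ and $\gamma_r(\alpha_r) = 1$. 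Comparing the two evaluations forces $\lambda = 2\frac{a_t^r}{a_r^t}\cos\frac{k_{rt}\uppi}{m_{rt}}$, which is exactly the claimed formula.

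For part~\eqref{lem-dual-2}, I would argue by a $2\times 2$ determinant. Reading off from \eqref{eq-6.5} the coefficients of $\alpha_r^*$ and $\alpha_t^*$ in $\gamma_r$ and in $\gamma_t$, these form the matrix
\[
  \begin{pmatrix}
    1 & -\frac{a_r^t}{a_t^r}\cos\frac{k_{rt}\uppi}{m_{rt}} \\
    -\frac{a_t^r}{a_r^t}\cos\frac{k_{rt}\uppi}{m_{rt}} & 1
  \end{pmatrix},
\]
whose determinant is $1 - \cos^2\frac{k_{rt}\uppi}{m_{rt}}$. Since $1 \le k_{rt} \le \frac{m_{rt}}{2}$ forces $0 < \frac{k_{rt}\uppi}{m_{rt}} \le \frac{\uppi}{2}$, we have $\cos\frac{k_{rt}\uppi}{m_{rt}} \ne \pm 1$, so this determinant is nonzero. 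Projecting any relation $c_r\gamma_r + c_t\gamma_t = 0$ onto the $\alpha_r^*,\alpha_t^*$ coordinates and using nonsingularity of the matrix then yields $c_r = c_t = 0$, i.e.\ linear independence.

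I do not anticipate a real obstacle: both parts collapse to short computations, and the only point that needs care is the harmless observation that $k_{rt}$ lies strictly between $0$ and $m_{rt}$, so $\cos\frac{k_{rt}\uppi}{m_{rt}} \ne \pm 1$. As an alternative to the determinant, part~\eqref{lem-dual-2} can also be extracted from part~\eqref{lem-dual-1}: if $\gamma_t$ and $\gamma_r$ were proportional, then combining the formula for $r\cdot\gamma_t$, the analogous formula for $t\cdot\gamma_r$ (part~\eqref{lem-dual-1} with $r$ and $t$ exchanged), and $r\cdot\gamma_r = -\gamma_r$, $t\cdot\gamma_t = -\gamma_t$ would force $4\cos^2\frac{k_{rt}\uppi}{m_{rt}} = 4$, the same contradiction. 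I would present the determinant argument as the main route since it is self-contained.
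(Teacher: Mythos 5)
Your proposal is correct, and for part~\eqref{lem-dual-1} it takes a genuinely different route from the paper. The paper proves \eqref{lem-dual-1} by brute force: it expands $r \cdot \gamma_t$ term-by-term using the dual action formulas \eqref{eq6.5} and \eqref{eq6.6} together with the definition \eqref{eq-6.5} of $\gamma_s$, and regroups the sums over $s \neq r, t$ until the right-hand side appears (this is the displayed chain \eqref{eq-lem-dual}). You instead exploit the structural observation, already recorded in the paper just before the lemma, that $r$ acts on $V^*$ as a reflection with reflection vector $\gamma_r$; Lemma~\ref{lem-refl} then gives $r \cdot \gamma_t = \gamma_t + \lambda \gamma_r$ a priori, and a single evaluation at $\alpha_r$ (using $\gamma_t(\alpha_r) = -\frac{a_t^r}{a_r^t}\cos\frac{k_{rt}\uppi}{m_{rt}}$ and $\gamma_r(\alpha_r) = 1$, both read off from \eqref{eq-6.5}) pins down $\lambda$. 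This is shorter and more conceptual, and your arithmetic checks out; what the paper's expansion buys in exchange is that it re-derives the reflection structure along the way rather than quoting it, and its intermediate chain \eqref{eq-lem-dual} is later cited directly in the proof of Theorem~\ref{thm-dual} --- though only the final formula is actually needed there, so your version supports that use equally well. For part~\eqref{lem-dual-2} your determinant argument is essentially the paper's proof in a mildly repackaged form: the paper assumes $\gamma_r$ and $\gamma_t$ proportional and compares the $\alpha_r^*$- and $\alpha_t^*$-coefficients to reach $\cos^2\frac{k_{rt}\uppi}{m_{rt}} = 1$, which is exactly the vanishing of your $2 \times 2$ minor, and both hinge on the same numerical fact that $1 \le k_{rt} \le \frac{m_{rt}}{2} < \infty$ (assumption \eqref{eq-fin-condition}) excludes $\cos\frac{k_{rt}\uppi}{m_{rt}} = \pm 1$. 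Your alternative derivation of \eqref{lem-dual-2} from \eqref{lem-dual-1} is also sound, but the determinant route is preferable precisely because, as in the paper, it is independent of part~\eqref{lem-dual-1}.
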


\begin{proof}
  The proof of \eqref{lem-dual-1} is just a direct computation using Equations \eqref{eq6.5}, \eqref{eq6.6} and \eqref{eq-6.5},
  \begin{align}
    r \cdot \gamma_t & = r \cdot \Bigl( \alpha_t^* - \sum_{s \neq t} \frac{a_t^s}{a_s^t} \cos \frac{k_{st} \uppi}{m_{st}} \alpha_s^* \Bigr) \notag \\
    & = \Bigl( \alpha_t^* - \sum_{s \neq r, t} \frac{a_t^s}{a_s^t} \cos \frac{k_{st} \uppi}{m_{st}} \alpha_s^* \Bigr) - \frac{a_t^r}{a_r^t} \cos \frac{k_{rt} \uppi}{m_{rt}} (r \cdot \alpha_r^*) \notag \\
    & = \Bigl( \alpha_t^* - \sum_{s \neq r, t} \frac{a_t^s}{a_s^t} \cos \frac{k_{st} \uppi}{m_{st}} \alpha_s^* \Bigr) \notag  - \frac{a_t^r}{a_r^t} \cos \frac{k_{rt} \uppi}{m_{rt}} \Bigl( - \alpha_r^* + 2 \sum_{s \neq r}  \frac{a_r^s}{a_s^r} \cos \frac{k_{sr} \uppi}{m_{sr}} \alpha_s^* \Bigr)  \notag \\
    & = \Bigl( \alpha_t^* - \sum_{s \neq  t} \frac{a_t^s}{a_s^t} \cos \frac{k_{st} \uppi}{m_{st}} \alpha_s^* \Bigr) \notag  +
    \frac{a_t^r}{a_r^t} \cos \frac{k_{rt} \uppi}{m_{rt}} \Bigl( 2 \alpha_r^* - 2 \sum_{s \neq r}  \frac{a_r^s}{a_s^r} \cos \frac{k_{sr} \uppi}{m_{sr}} \alpha_s^* \Bigr)  \notag \\
    & = \gamma_t + 2 \frac{a_t^r}{a_r^t} \cos \frac{k_{rt} \uppi}{m_{rt}} \gamma_r.   \label{eq-lem-dual}
  \end{align}

  Now we prove \eqref{lem-dual-2}.
  We have
  \begin{align*}
    \gamma_r & = \alpha_r^* - \frac{a_r^t}{a_t^r} \cos \frac{k_{rt} \uppi}{m_{rt}} \alpha_t^* - \sum_{s \neq r, t} \frac{a_r^s}{a_s^r} \cos \frac{k_{sr} \uppi}{m_{sr}} \alpha_s^*, \\
    \gamma_t & = \alpha_t^* - \frac{a_t^r}{a_r^t} \cos \frac{k_{rt} \uppi}{m_{rt}} \alpha_r^* - \sum_{s \neq r, t} \frac{a_t^s}{a_s^t} \cos \frac{k_{st} \uppi}{m_{st}} \alpha_s^*.
  \end{align*}
  Note that $\{\alpha_s^* \mid s \in S\}$ is a basis of $V^*$.
  If $\gamma_r$ and $\gamma_t$ are proportional, then
  \begin{equation*}
    1 = \left(\frac{a_r^t}{a_t^r} \cos \frac{k_{rt} \uppi}{m_{rt}} \right) \left( \frac{a_t^r}{a_r^t} \cos \frac{k_{rt} \uppi}{m_{rt}} \right) = \cos^2 \frac{k_{rt} \uppi}{m_{rt}}
  \end{equation*}
  which is impossible since $1 \le k_{rt} \le \frac{m_{rt}}{2} < \infty$ (currently we assume that $m_{rt} < \infty$, see assumption \eqref{eq-fin-condition}).
\end{proof}

\begin{theorem} \label{thm-dual}
  Notations as above.
  \begin{enumerate}
    \item \label{thm-dual-1} The vectors $\{\gamma_s \mid s \in S\}$ are linearly independent if and only if the matrix $A$ defined in Equation \eqref{matrixA} is invertible (equivalently, if and only if $V$ is semisimple, see Theorem \ref{thm-reducibility}).
    \item \label{thm-dual-2} If $V$ is semisimple, then $V^* = \bigoplus_{s} \mathbb{C} \gamma_s$, and $V^*$ is a generalized geometric representation of $W$.
        Suppose $((k_{rt})_{r,t \in S, r \ne t},\chi)$ is the datum corresponding to  $V$.
        Then $V^*$ corresponds to the datum $((k_{rt})_{r,t\in S, r \ne t},\chi^*)$.
    \item \label{thm-dual-3} In general cases ($V$ is not assumed to be semisimple), $\{\gamma_s \mid s \in S\}$ span a subrepresentation $V_1 := \sum_{s \in S} \mathbb{C} \gamma_s$ of $V^*$.
        Moreover, the subrepresentation $V_1$ is an R-representation isomorphic to the semisimple quotient of the  generalized geometric representation corresponding to the datum \[((k_{rt})_{r,t \in S, r \ne t}, \chi^*).\]
        The action of $W$ on the quotient $V^* / V_1$ is trivial.
  \end{enumerate}
\end{theorem}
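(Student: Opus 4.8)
The plan is to exploit that the change of coordinates from the dual basis $\{\alpha_s^*\}$ to the candidate reflection vectors $\{\gamma_s\}$ is governed by precisely the matrix $A$ of Equation \eqref{matrixA}, and then to feed this into the structural results of Subsections \ref{subsec-red} and \ref{subsec-R}.

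For \eqref{thm-dual-1} I would first order $S=\{s_1,\dots,s_n\}$ and read off from the definition \eqref{eq-6.5} that the coefficient of $\alpha_{s_j}^*$ in $\gamma_{s_i}$ is $1$ when $i=j$ and $-\frac{a_{s_i}^{s_j}}{a_{s_j}^{s_i}}\cos\frac{k_{s_is_j}\uppi}{m_{s_is_j}}$ when $i\ne j$; that is, $\gamma_{s_i}=\sum_j A_{ij}\,\alpha_{s_j}^*$, so the transition matrix is exactly $A$. Hence $\{\gamma_s\mid s\in S\}$ is linearly independent if and only if $A$ is invertible, and by Corollary \ref{cor-reducibility} this holds if and only if $V$ is semisimple. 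The same computation records $\dim\bigl(\sum_s\mathbb{C}\gamma_s\bigr)=\operatorname{rank}A$ in general, which I will reuse below.

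For \eqref{thm-dual-2}, when $A$ is invertible the $n$ vectors $\{\gamma_s\}$ form a basis of the $n$-dimensional space $V^*$, so $V^*=\bigoplus_s\mathbb{C}\gamma_s$. Equations \eqref{eq6.5}--\eqref{eq6.6} already display each $s$ as a reflection with reflection vector $\gamma_s$, and Lemma \ref{lem-dual-dih}\eqref{lem-dual-1} shows $\mathbb{C}\langle\gamma_r,\gamma_t\rangle$ carries $\rho_{k_{rt}}$, so $V^*$ is a generalized geometric representation with the same numbers $k_{rt}$. Comparing Lemma \ref{lem-dual-dih}\eqref{lem-dual-1} with the defining action \eqref{eq-IR-action}, a datum $(k_{rt},b_r^t)$ for $V^*$ must satisfy $\frac{b_r^t}{b_t^r}=\frac{a_t^r}{a_r^t}$, and by \eqref{eq-4.3} its associated character sends a circuit to $\frac{b_t^r}{b_r^t}=\frac{a_r^t}{a_t^r}=\chi^*$. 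Thus $V^*$ corresponds to $((k_{rt})_{r,t\in S,r\ne t},\chi^*)$.

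For \eqref{thm-dual-3} I would argue in the general case as follows. First, $V_1=\sum_s\mathbb{C}\gamma_s$ is $W$-invariant by Lemma \ref{lem-dual-dih}\eqref{lem-dual-1} together with $s\cdot\gamma_s=-\gamma_s$. On $V_1$ the operator $s-\Id$ sends $\gamma_s\mapsto-2\gamma_s$ and $\gamma_t\mapsto 2\frac{a_t^s}{a_s^t}\cos\frac{k_{st}\uppi}{m_{st}}\gamma_s$ for $t\ne s$, so its image is the line $\mathbb{C}\gamma_s$; since $s$ is an involution, $V_1=\ker(s-\Id)\oplus\mathbb{C}\gamma_s$ and $s|_{V_1}$ is a reflection with vector $\gamma_s$. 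With Lemma \ref{lem-dual-dih}\eqref{lem-dual-2} this makes $V_1$ an R-representation whose extracted datum has the same $k_{rt}$ and character $\chi^*$ as in \eqref{thm-dual-2}. By Theorem \ref{thm-R1}\eqref{5.6.1}, $V_1$ is a quotient $\pi':V'\twoheadrightarrow V_1$ of the generalized geometric representation $V'$ attached to $((k_{rt}),\chi^*)$, with $\ker\pi'$ contained in the maximal trivial subrepresentation $V_0'$ of $V'$. To identify $\ker\pi'$ I compare dimensions: the off-diagonal entries of the matrix $A'$ of $V'$ are $-\frac{b_i^j}{b_j^i}\cos\frac{k_{ij}\uppi}{m_{ij}}=-\frac{a_j^i}{a_i^j}\cos\frac{k_{ij}\uppi}{m_{ij}}$, so $A'=A^{\mathrm T}$ and $\operatorname{rank}A'=\operatorname{rank}A=\dim V_1$. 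Since $\dim(V'/V_0')=\operatorname{rank}A'$ by Corollary \ref{cor-reducibility} and $\ker\pi'\subseteq V_0'$, equality of dimensions forces $\ker\pi'=V_0'$, so $V_1\simeq V'/V_0'$ is the semisimple quotient. Finally $(s-\Id)\alpha_s^*=-2\gamma_s\in V_1$ while $(s-\Id)\alpha_t^*=0$ for $t\ne s$ by \eqref{eq6.5}--\eqref{eq6.6}, whence $s$ acts as the identity on $V^*/V_1$ for every $s$, i.e. $W$ acts trivially there.

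The main obstacle is the final identification in \eqref{thm-dual-3}: one must rule out that $V_1$ is a strictly larger, non-semisimple quotient of $V'$. This is exactly what the transpose relation $A'=A^{\mathrm T}$ settles, by equating $\operatorname{rank}A'$ with $\dim V_1$. Extra care is needed because $\{\gamma_s\}$ need not be a basis of $V_1$, so the datum of $V_1$ must be read through the R-representation formalism of Subsection \ref{subsec-R} rather than by inverting a change of basis.
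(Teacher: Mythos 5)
Your proposal is correct and follows essentially the same route as the paper's proof: the transition-matrix observation (your row-convention matrix $A$ is the paper's $A^{\mathrm{T}}$, a harmless difference since invertibility and rank are transpose-invariant), the comparison of Lemma \ref{lem-dual-dih} with Equation \eqref{eq-IR-action} giving the dual datum $b_r^t = a_t^r$ and character $\chi^*$, and the identification $V_1 \simeq V^\prime/V_0^\prime$ via $\ker\pi^\prime \subseteq V_0^\prime$ together with the rank count $\dim V_1 = \rank A^{\mathrm{T}} = \dim V^\prime/V_0^\prime$ are precisely the paper's steps, down to the final computation $(s-\Id)\alpha_s^* = -2\gamma_s$ for the trivial action on $V^*/V_1$. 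Your explicit verification that each $s$ restricts to a reflection on $V_1$ is a detail the paper leaves implicit, but it is the same argument in spirit.
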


\begin{proof}
  By Formula \eqref{eq-6.5}, the transition matrix from $\{\alpha_s^* \mid s \in S\}$ to $\{\gamma_s \mid s \in S\}$ is $A^\mathrm{T}$, the transpose of $A$.
  Thus $\{\gamma_s\mid s \in S\}$ are linearly independent if and only if $A$ is invertible.
  This proves \eqref{thm-dual-1}.

  If $V$ is semisimple, then $\{\gamma_s \mid s \in S\}$ are linearly independent by \eqref{thm-dual-1}, and clearly they form a basis of $V^*$.
  Note that for any $s \in S$, $\gamma_s$ is the unique $-1$-eigenvector of $s$.
  Thus $V^*$ is a generalized geometric representation.
  By comparing Equations \eqref{eq-lem-dual} and \eqref{eq-IR-action}, we see that $V^*$ is defined by the GGR-datum $(k_{rt}, b_r^t)_{r,t \in S, r \ne t}$ where $b_r^t = a_t^r$.
  Thus, $\chi^*$ is the character of $H_1(\widetilde{G})$ associated with $V^*$.
  This proves \eqref{thm-dual-2}.

  Now we drop the assumption that $V$ is semisimple.
  By Lemma \ref{lem-dual-dih}, $V_1 = \sum_{s \in S} \mathbb{C} \gamma_s$ is a subrepresentation of $V^*$.
  Moreover, $\gamma_r$ and $\gamma_t$ are not proportional for any $r,t \in S$ with $r \ne t$.
  Thus $V_1$ is an R-representation.

  Let $V^\prime$ be the  generalized geometric representation of $W$ such that $V_1$ is a quotient of $V^\prime$.
  Equation \eqref{eq-lem-dual} tells us that $V^\prime$ is defined by the GGR-datum $(k_{rt}, b_r^t)_{r,t \in S, r \ne t}$ where $b_r^t = a_t^r$.
  As in \eqref{thm-dual-2}, the representation $V^\prime$ corresponds to the datum $((k_{rt})_{r,t \in S, r \ne t}, \chi^*)$.
  Let
  \begin{equation*}
    V_0 := \{v \in V^\prime \mid w \cdot v = v, \forall w \in W\}.
  \end{equation*}
  Then by Corollary \ref{cor-reducibility}, $\dim V^\prime/V_0 = \rank(A^\mathrm{T})$ since $A^\mathrm{T}$ is the corresponding matrix of $V^\prime$ defined as in Equation \eqref{matrixA}.
  From the proof of \eqref{thm-dual-1}, we know that $\dim V_1 = \rank (A^\mathrm{T}) = \dim V^\prime/V_0$.
  In view of Theorem \ref{thm-R1}, $V_1$ must be semisimple and isomorphic to $V^\prime/V_0$.

  By Equations \eqref{eq6.5} and \eqref{eq-6.5}, we have
  \begin{equation}   \label{eq-thm-dual}
    s \cdot \alpha_s^* = \alpha_s^* -2 \gamma_s.
  \end{equation}
  By Equations \eqref{eq6.6} and \eqref{eq-thm-dual}, we see that $W$ acts on $V^*/V_1$ trivially.
  The proof of \eqref{thm-dual-3} is complete.
\end{proof}

\section{General situation: allowing \texorpdfstring {$m_{rt}$}{mst} to be \texorpdfstring{$\infty$}{infinity}} \label{sec-gen}

From now on, we remove the assumption \eqref{eq-fin-condition} that $m_{rt} < \infty$, $\forall r,t \in S$.

\subsection{Generalized geometric representations and R-representations}
For any pair $r, t \in S$ with $r \ne t$, we define a parameter set
\begin{equation*}
    P_{rt} = P_{tr} := \begin{cases}
            \{\rho_k \mid k \in \mathbb{N}, 1 \leq k \leq \frac{m_{rt}}{2}\}, & \mbox{if } m_{rt} < \infty; \\
            \{\varrho_r^t, \varrho_t^r\} \cup \{\varrho_z \mid z \in \mathbb{C}\}, & \mbox{if } m_{rt} = \infty.
          \end{cases}
\end{equation*}
(The notations $\varrho_r^t, \varrho_t^r, \varrho_z$ are defined in Subsection \ref{subsec-inf-dih}.)
As in Section \ref{sec-pre}, we use the notations $\beta_r, \beta_t$ to denote the basis of the representation space of an representation in $P_{rt}$.
If $m_{rt} = \infty$, then, by Lemma \ref{lem-infdih-ind-rep} and the fact that $\varrho_0 \simeq \varepsilon_r \oplus \varepsilon_t$, the set $P_{rt}$ consists of all the generalized geometric representations of the dihedral subgroup $\langle r,t \rangle \simeq D_\infty$.

Suppose for any pair $r, t \in S$ with $r\ne t$, we are given $\delta_{rt} \in P_{rt}$ such that $\delta_{rt} = \delta_{tr}$ indicating the same representation of $\langle r,t \rangle$.
Define a Coxeter group $(\widetilde{W},S)$ with Coxeter matrix $(\widetilde{m}_{rt})_{r,t \in S}$ as follows.

\begin{enumerate}
  \item If $m_{rt} < \infty$ and $\delta_{rt} = \rho_{k_{rt}}$, then  $\widetilde{m}_{rt} := \frac{m_{rt}}{d_{rt}}$ where $d_{rt} = \gcd (m_{rt}, k_{rt})$.
  \item If $m_{rt} = \infty$, $\delta_{rt} = \varrho_z$ and $z = 4 \cos^2 \frac{k \uppi}{m}$ for some $k,m \in \mathbb{N}$ such that $k,m$ are co-prime and $1 \leq k < \frac{m}{2}$, then $\widetilde{m}_{rt} := m$.
  \item If $m_{rt} = \infty$, $\delta_{rt} = \varrho_0$, then $\widetilde{m}_{rt} := 2$.
  \item Otherwise, let $\widetilde{m}_{rt} := \infty$.
\end{enumerate}
We denote by $\widetilde{G}$ the Coxeter graph of $\widetilde{W}$, and we say $\widetilde{G}$ is the \emph{associated graph} of $(\delta_{rt})_{r,t \in S, r \ne t}$.
Again, $\widetilde{G}$ and the Coxeter graph $G$ of $W$ have the same vertex set $S$, and the edge set of $\widetilde{G}$  is a subset of that of $G$ (forgetting the labels on the edges).
An edge $\{r,t\}$ in $G$ is an edge in $\widetilde{G}$ if and only if either one of the following is satisfied: (1) $m_{rt} < \infty$ and $\delta_{rt} \ne \rho_{\frac{m_{rt}}{2}}$; (2) $m_{rt} = \infty$ and $\delta_{rt} \ne \varrho_0$.

The following lemma is crucial in the classification of the generalized geometric representations of $W$.

\begin{lemma} \label{lem-gen} \leavevmode
  \begin{enumerate}
    \item \label{lem-gen-1} For any $r,t \in S$ with $r \ne t$, the representations in $P_{rt}$ are non-isomorphic to each other.
    \item \label{lem-gen-2} Given $(\delta_{rt})_{r,t \in S, r \ne t}$ where $\delta_{rt} = \delta_{tr} \in P_{rt}$, let $\widetilde{G}$ be its associated graph.
        If $\{r,t\}$ is an edge in $\widetilde{G}$, then any endomorphism of the representation $\delta_{rt}$ is a scalar.
  \end{enumerate}
\end{lemma}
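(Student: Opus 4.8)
The plan is to reduce both assertions to the structural results for the two dihedral groups established in Section \ref{sec-pre}, treating the cases $m_{rt} < \infty$ and $m_{rt} = \infty$ separately. In each case the set $P_{rt}$ is by construction a list of the relevant representations of $\langle r, t \rangle$, so the work is to match the members of $P_{rt}$ against the classification lemmas and to keep careful track of the one reducible member hiding in each list.

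For part \eqref{lem-gen-1}, consider first $m_{rt} < \infty$, so $P_{rt} = \{\rho_k \mid 1 \le k \le \frac{m_{rt}}{2}\}$. The representations $\rho_k$ with $k < \frac{m_{rt}}{2}$ are irreducible and pairwise non-isomorphic by Lemma \ref{lem-Dm-rep}. The only remaining member, $\rho_{\frac{m_{rt}}{2}}$ (present when $m_{rt}$ is even), is reducible, hence not isomorphic to any of the irreducible $\rho_k$; this settles the finite case. When $m_{rt} = \infty$, we have $P_{rt} = \{\varrho_r^t, \varrho_t^r\} \cup \{\varrho_z \mid z \in \mathbb{C}\}$. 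The members $\varrho_r^t, \varrho_t^r$ and $\varrho_z$ with $z \in \mathbb{C}^\times$ all appear in the list of Lemma \ref{lem-infdih-ind-rep} and are therefore pairwise non-isomorphic; moreover distinct nonzero values of $z$ give non-isomorphic $\varrho_z$ by Lemma \ref{lem-2.12-infdih}\eqref{lem-2.12-1} applied to $\varrho_z = \varrho_{u,u}$ with $u^2 = z$. The lone exceptional member is $\varrho_0 \simeq \varepsilon_r \oplus \varepsilon_t$, which is decomposable and thus cannot be isomorphic to any of the indecomposables above.

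For part \eqref{lem-gen-2}, the key observation is that the hypothesis ``$\{r,t\}$ is an edge in $\widetilde{G}$'' is precisely the condition that excludes the single reducible or decomposable member of $P_{rt}$: in the notation introduced just before the lemma, it forces $\delta_{rt} \ne \rho_{\frac{m_{rt}}{2}}$ when $m_{rt} < \infty$, and $\delta_{rt} \ne \varrho_0$ when $m_{rt} = \infty$. In the finite case this leaves $\delta_{rt} = \rho_k$ with $k < \frac{m_{rt}}{2}$, which is irreducible, so every endomorphism is a scalar by Schur's Lemma \ref{lem-schur}. In the infinite case $\delta_{rt}$ lies in $\{\varrho_r^t, \varrho_t^r\} \cup \{\varrho_z \mid z \in \mathbb{C}^\times\}$, and Lemma \ref{lem-2.11}\eqref{lem-infdih-schur} gives exactly the scalar conclusion.

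The argument is essentially bookkeeping, and the only point requiring care --- the ``hard part'' in an otherwise routine verification --- is precisely the handling of the two boundary members $\rho_{\frac{m_{rt}}{2}}$ and $\varrho_0$, which are included in $P_{rt}$ yet fail to be irreducible. One must check that they do not accidentally coincide with other members when proving part \eqref{lem-gen-1}, and confirm that the combinatorial definition of the edges of $\widetilde{G}$ deletes exactly these members, so that part \eqref{lem-gen-2} is free to invoke Schur's lemma.
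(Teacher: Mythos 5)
Your proof is correct and follows essentially the same route as the paper: part \eqref{lem-gen-1} reduces to the classification lemmas for the dihedral groups (Lemma \ref{lem-Dm-rep} and Lemma \ref{lem-infdih-ind-rep}), and part \eqref{lem-gen-2} reduces to the Schur-type statements (Lemma \ref{lem-schur} and Lemma \ref{lem-2.11}\eqref{lem-infdih-schur}) once the edge condition in $\widetilde{G}$ rules out $\rho_{\frac{m_{rt}}{2}}$ and $\varrho_0$. The paper's proof is a one-line citation of these four lemmas; your write-up merely makes explicit the bookkeeping for the two reducible boundary members, which is a faithful elaboration rather than a different argument.
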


\begin{proof}
  The point \eqref{lem-gen-1} is due to Lemma \ref{lem-Dm-rep} and Lemma \ref{lem-infdih-ind-rep}.
  The point \eqref{lem-gen-2} is derived from Lemma \ref{lem-schur} and Lemma \ref{lem-2.11}\eqref{lem-infdih-schur}.
\end{proof}

Define a GGR-datum of $W$ to be a set $(\delta_{rt}, a_r^t)_{r , t \in S, r \ne t}$ in which $a_r^t \in \mathbb{C}^\times$ and $\delta_{rt} \in P_{rt}$ such that $\delta_{rt} = \delta_{tr}$.
The character $\chi$ of $H_1(\widetilde{G})$ associated with the datum $(\delta_{rt}, a_r^t)_{r , t \in S, r \ne t}$ is defined by \eqref{eq-4.3} as before.

The classification of the generalized geometric representations of $W$ is nearly the same as Theorem \ref{thm-IR}.

\begin{theorem} \label{thm-IR-gen}
  The isomorphism classes of generalized geometric representations of $(W,S)$ one-to-one correspond to the set of data
  \begin{equation*}
    \left\{ \bigl((\delta_{rt})_{r, t \in S, r \ne t}, \chi\bigr) \text{ } \middle\vert 
    \begin{gathered}
      \text{$\delta_{rt} = \delta_{tr} \in P_{rt}$, $\forall r, t \in S$, $r \ne t$;} \\
      \text{$\chi$ is a character of  $H_1(\widetilde{G}$), where} \\
      \text{ $\widetilde{G}$ is the associated graph of $(\delta_{rt})_{r,t \in S, r \ne t}$}
    \end{gathered}\right\}.
  \end{equation*}
\end{theorem}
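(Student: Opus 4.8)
The plan is to run the proof of Theorem~\ref{thm-IR} essentially verbatim, with Lemma~\ref{lem-gen} playing the roles that Lemma~\ref{lem-Dm-rep} and Schur's Lemma~\ref{lem-schur} played in the finite case. First I would reprise the two constructions of Subsection~\ref{subsec-struIR}. Starting from a generalized geometric representation $V = \bigoplus_{s \in S} \mathbb{C}\alpha_s$, for each pair $r \ne t$ the subspace $\mathbb{C}\langle \alpha_r, \alpha_t \rangle$ is a two-dimensional representation of $\langle r,t \rangle$ on which both generators act by reflections with linearly independent reflection vectors; by Lemma~\ref{lem4.2} when $m_{rt} < \infty$ and by Lemma~\ref{lem-infdih-ind-rep} when $m_{rt} = \infty$, it is isomorphic to a unique $\delta_{rt} \in P_{rt}$, and fixing isomorphisms $\beta_r \mapsto a_r^t \alpha_r$, $\beta_t \mapsto a_t^r \alpha_t$ produces scalars $a_r^t \in \mathbb{C}^\times$. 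Conversely, given a datum $(\delta_{rt}, a_r^t)$ I would define the $W$-action by the evident analogue of Equation~\eqref{eq-IR-action}, in which the coefficient of $\alpha_r$ in $r \cdot \alpha_t$ is $\frac{a_r^t}{a_t^r}$ times the coefficient of $\beta_r$ in $r \cdot \beta_t$ inside $\delta_{rt}$, and then establish the analogue of Lemma~\ref{lem-defrep}.

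For rigidity and bijectivity, the arguments of Lemmas~\ref{lem-k=l}, \ref{lem-chi1=2}, \ref{lem-surj} and \ref{lem-inj} carry over once their two inputs are supplied in the general setting. That isomorphic representations share the same $\delta_{rt}$ follows from Lemma~\ref{lem-gen}\eqref{lem-gen-1} (pairwise non-isomorphism within each $P_{rt}$), exactly as Lemma~\ref{lem-k=l} used Lemma~\ref{lem-Dm-rep}. That they share the same character $\chi$, and that the fibre map $\Theta$ is injective, both rest on the fact that an endomorphism of $\delta_{rt}$ is a scalar whenever $\{r,t\}$ is an edge of $\widetilde{G}$ --- this is precisely Lemma~\ref{lem-gen}\eqref{lem-gen-2}, replacing the appeal to Schur's Lemma~\ref{lem-schur}. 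Surjectivity of $\Theta$ (the analogue of Lemma~\ref{lem-surj}) needs no change at all, since it uses only the homological description $H_1(\widetilde{G}) = \bigoplus_{\fe} \mathbb{Z} c_\fe$ from Lemma~\ref{lem-2.12} together with the definition~\eqref{eq-4.3} of $\chi$. Combining these with the two constructions yields the stated bijection.

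The main obstacle is the realization step, the analogue of Lemma~\ref{lem-defrep}, where finite and infinite pairs coexist. Here the only relations to verify are $r^2 = e$ and $(rt)^{m_{rt}} = e$; the latter is vacuous when $m_{rt} = \infty$, so it need only be checked for finite pairs, where $\delta_{rt} = \rho_{k_{rt}}$ and the original eigenvector argument applies verbatim. That argument invokes a third generator $s$ and uses only that the $+1$-eigenspace of $r$ in $\mathbb{C}\langle \alpha_r, \alpha_s \rangle$ is one-dimensional and transverse to $\mathbb{C}\alpha_r$ --- a property enjoyed by every member of $P_{rs}$, finite or infinite, because $r$ acts there by a reflection --- together with Lemma~\ref{lem-2.4}\eqref{lem-2.4-2} for the finite pair $(r,t)$. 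A second subtlety worth flagging is that in the infinite case an edge representation $\delta_{rt}$ may be reducible yet indecomposable (for instance $\varrho_4$, the geometric representation of $D_\infty$), so the Schur-type rigidity can no longer be read off from irreducibility; it is exactly for this reason that the input is phrased as \emph{scalar endomorphisms} in Lemma~\ref{lem-gen}\eqref{lem-gen-2} rather than as irreducibility, and this is the one place where the general argument genuinely diverges from the finite one.
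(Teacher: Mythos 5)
Your proposal is correct and takes essentially the same route as the paper's own proof of Theorem \ref{thm-IR-gen}: the paper likewise reprises the constructions of Subsection \ref{subsec-struIR}, invokes Lemma \ref{lem-gen}\eqref{lem-gen-1} in place of Lemma \ref{lem-Dm-rep} to pin down the $\delta_{rt}$, uses Lemma \ref{lem-gen}\eqref{lem-gen-2} as the Schur-type input for the arguments of Lemmas \ref{lem-chi1=2} and \ref{lem-inj}, and reruns Lemma \ref{lem-surj} unchanged. Your added detail on the realization step (the braid relation being vacuous for $m_{rt}=\infty$, and the one-dimensional, transverse $+1$-eigenspace argument working for every member of $P_{rs}$) is exactly what the paper compresses into ``the same arguments as in the proof of Lemma \ref{lem-defrep}''.
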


\begin{proof}
We basically follow the line of the proofs in Section \ref{sec-IR}.
Let $V = \bigoplus_{s \in S} \mathbb{C} \alpha_s$ be a generalized geometric representation of $W$.
Clearly, Lemma \ref{lem4.2}\eqref{4.2-1} is still valid, i.e., for any pair $r, t \in S$ with $r \ne t$, the subspace $\mathbb{C} \langle \alpha_r, \alpha_t \rangle$ forms a two dimensional representation $\phi_{rt}$ of the subgroup $\langle r,t \rangle$.
Furthermore, $\phi_{rt}$ is a generalized geometric representation of $\langle r,t \rangle$.
Thus, there is a representation $\delta_{rt} \in P_{rt}$ such that  $\delta_{rt} \simeq \phi_{rt}$.
This isomorphism must send $\beta_r$ to $a_r^t \alpha_r$ and $\beta_t$ to $a_t^r \alpha_t$ for some $a_r^t, a_t^r \in \mathbb{C}^\times$.
Similar to the arguments in Subsection \ref{subsec-struIR}, we see that the action of $W$ on the generalized geometric representation $V$ is determined by a GGR-datum $(\delta_{rt}, a_r^t)_{r,t \in S, r \ne t}$.

Conversely, given arbitrarily a GGR-datum $(\delta_{rt}, a_r^t)_{r,t \in S, r \ne t}$, we can define a linear map $\phi(r)$ for any $r \in S$ on the vector space $V := \bigoplus_{s \in S} \mathbb{C} \alpha_s$ as follows.
Define $\phi(r) \cdot \alpha_r := -\alpha_r$.
For $t \in S \setminus \{r\}$, if $\delta_{rt}(r) \cdot \beta_t = \beta_t + c\beta_r$ where $c \in \mathbb{C}$, then we define $\phi(r) \cdot \alpha_t : = \alpha_t + c \frac{a_r^t}{a_t^r} \alpha_r$.
The same arguments as in the proof of Lemma \ref{lem-defrep} show that $V$ is a generalized geometric representation of $W$ under the action $\phi$.

By Lemma \ref{lem-gen}\eqref{lem-gen-1}, we have an analogue of Lemma \ref{lem-k=l}, stated as follows:
if two GGR-data of $W$, say $(\delta_{rt}, a_r^t)_{r,t \in S, r \ne t}$ and $(\zeta_{rt}, b_r^t)_{r,t \in S, r \ne t}$, define isomorphic generalized geometric representations, then $\delta_{rt} = \zeta_{rt}$ for any $r,t \in S$ with $r \ne t$.

Notice that we have Lemma \ref{lem-gen}\eqref{lem-gen-2} which serves as Schur's lemma.
Therefore, the same arguments as in the proofs of Lemma \ref{lem-chi1=2} and Lemma \ref{lem-inj} show that the two  data  $(\delta_{rt}, a_r^t)_{r,t \in S, r \ne t}$ and $(\delta_{rt}, b_r^t)_{r,t \in S, r \ne t}$ define isomorphic generalized geometric representations if and only if they share the same associated character of $H_1(\widetilde{G})$.

At last, Lemma \ref{lem-surj} still works, showing that for any set $(\delta_{rt})_{r,t \in S, r \ne t}$ and any character $\chi: H_1(\widetilde{G}) \to \mathbb{C}^\times$, there exists a GGR-datum $(\delta_{rt}, a_r^t)_{r,t \in S, r \ne t}$ such that $\chi$ is the associated character.
The theorem follows.
\end{proof}

We have a description of R-representations similar to Theorem \ref{thm-R1}\eqref{5.6.1}.

\begin{theorem}\label{thm-R1-gen}
  Let $V = \sum_{s \in S} \mathbb{C} \alpha_s$ be an R-representation of $W$.
  Then there is a unique generalized geometric representation $V^\prime = \bigoplus_{s \in S} \mathbb{C} \alpha_s^\prime$ such that $V$ is a quotient of $V^\prime$, say, $\pi : V^\prime \twoheadrightarrow V$.
  Moreover, $W$ acts on $\ker \pi$ trivially.
\end{theorem}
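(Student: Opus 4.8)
The plan is to follow the proof of Theorem~\ref{thm-R1}\eqref{5.6.1} almost verbatim; the one genuinely new feature, once $m_{rt}=\infty$ is allowed, is that the reflection vectors $\alpha_r,\alpha_t$ may be proportional, so the $\langle r,t\rangle$-subrepresentation they span need no longer be two dimensional. First I would extract a GGR-datum from $V$. By Lemma~\ref{lem-refl}, for each pair $r\ne t$ the subspace $\mathbb{C}\langle\alpha_r,\alpha_t\rangle$ is a $\langle r,t\rangle$-subrepresentation $\phi_{rt}$ of dimension $1$ or $2$. If $\dim\phi_{rt}=2$ (automatic when $m_{rt}<\infty$), then, exactly as in the proof of Theorem~\ref{thm-IR-gen}, $\phi_{rt}$ is a generalized geometric representation of $\langle r,t\rangle$, hence isomorphic to a unique $\delta_{rt}\in P_{rt}$, and a choice of isomorphism $\beta_r\mapsto a_r^t\alpha_r$, $\beta_t\mapsto a_t^r\alpha_t$ supplies scalars $a_r^t,a_t^r\in\mathbb{C}^\times$. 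If $\dim\phi_{rt}=1$, then necessarily $m_{rt}=\infty$ and both $r$ and $t$ act by $-1$ on the line $\mathbb{C}\alpha_r$, so $\phi_{rt}\simeq\varepsilon$. Here I would set $\delta_{rt}:=\varrho_4$: inspecting the list in Lemma~\ref{lem-infdih-ind-rep}, the geometric representation $\varrho_4$ of $D_\infty$ is the only member of $P_{rt}$ admitting $\varepsilon$ as a quotient, and it carries $\II$ as the complementary subrepresentation. Writing $\alpha_r=c\,\alpha_t$ and choosing $a_r^t,a_t^r$ with $a_r^t/a_t^r=-c^{-1}$, one checks directly that the action prescribed by $\varrho_4$ on $\mathbb{C}\langle\alpha_r',\alpha_t'\rangle$ projects precisely onto the action of $r$ and $t$ on $\mathbb{C}\alpha_r$.

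Next I would let $V'=\bigoplus_{s\in S}\mathbb{C}\alpha_s'$ be the generalized geometric representation attached to the datum $(\delta_{rt},a_r^t)_{r,t\in S,r\ne t}$ by Theorem~\ref{thm-IR-gen}, and define $\pi(\alpha_s'):=\alpha_s$. Since the structure constants of $V'$ were read off from (or, in the proportional case, arranged to match) those of $V$, the map $\pi$ is a surjective homomorphism of $W$-representations; this is the routine pair-by-pair check of Lemma~\ref{lem-defrep}. The triviality of the $W$-action on $\ker\pi$ is then immediate: if $v\in\ker\pi$ and $s\cdot v\ne v$ for some $s\in S$, then $v-s\cdot v$ is a nonzero vector of $\mathbb{C}\alpha_s'$ by Lemma~\ref{lem-refl}, and it lies in the subrepresentation $\ker\pi$; hence $\alpha_s'\in\ker\pi$ and $\alpha_s=\pi(\alpha_s')=0$, a contradiction.

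For uniqueness, suppose $\pi'':V''\twoheadrightarrow V$ is another surjection from a generalized geometric representation $V''=\bigoplus_s\mathbb{C}\alpha_s''$. Each $\alpha_s''$ maps to a nonzero $(-1)$-eigenvector of $s$, hence to a nonzero multiple of $\alpha_s$, so after rescaling I may assume $\pi''(\alpha_s'')=\alpha_s$. For every pair, $\mathbb{C}\langle\alpha_r'',\alpha_t''\rangle$ is two dimensional and surjects onto $\mathbb{C}\langle\alpha_r,\alpha_t\rangle$; when the latter is two dimensional this restriction is an isomorphism and fixes the isomorphism type $\delta_{rt}''\in P_{rt}$, while when it is one dimensional the same list-checking forces $\delta_{rt}''=\varrho_4$. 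Comparing the action of $r$ on $\alpha_t''$ with that on $\alpha_t$ then pins down the ratios $b_r^t/b_t^r$ of the datum defining $V''$ along every edge of $\widetilde{G}$ to coincide with those coming from $V'$, so the two data determine the same associated character; by Theorem~\ref{thm-IR-gen} this yields $V''\simeq V'$. The hard part will be the bookkeeping around the proportional pairs — specifically, confirming that $\varrho_4$ is the unique admissible lift of the sign representation and that this single local choice is simultaneously compatible with making $\pi$ a homomorphism, trivializing the kernel action, and forcing uniqueness — whereas the remaining steps are faithful transcriptions of the finite-rank arguments of Section~\ref{sec-IR}.
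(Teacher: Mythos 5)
Your proposal is correct and follows essentially the same route as the paper's proof: extract a GGR-datum from $V$, assign $\delta_{rt}=\varrho_4$ to proportional pairs with $a_r^t\alpha_r+a_t^r\alpha_t=0$ (your ratio $a_r^t/a_t^r=-c^{-1}$ is exactly this choice), verify $\pi$ pair by pair, prove uniqueness by noting that $\varrho_4$ is the only member of $P_{rt}$ with quotient $\varepsilon$ and then matching the ratios along edges of $\widetilde{G}$, and kill the kernel action via Lemma \ref{lem-refl}. The only point stated loosely is the edge comparison when $\delta_{rt}\in\{\varrho_r^t,\varrho_t^r\}$, where one of the two generators acts trivially on the other reflection vector and gives no constraint, so one must compare using the other generator --- exactly the case distinction the paper makes explicitly --- but since your $r$ and $t$ range symmetrically this is covered.
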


\begin{proof}
  First, as in the proof of Theorem \ref{thm-R1}, we want to construct a GGR-datum of $W$ from the R-representation $V$.

  For any $r,t \in S$ with $r \ne t$, if $\alpha_r$ and $\alpha_t$ are linearly independent, then $\mathbb{C} \langle \alpha_r, \alpha_t \rangle$ is a generalized geometric representation of $\langle r,t \rangle$, and then we obtain an element $\delta_{rt} \in P_{rt}$ and numbers $a_r^t, a_t^r \in\mathbb{C}^\times$ as before.

  Note that currently $m_{rt}$ is allowed to be $\infty$, so $\alpha_r$ and $\alpha_t$ may be proportional when $m_{rt} = \infty$.
  If this happens, then we choose $\delta_{rt} = \varrho_4$, the geometric representation of $\langle r,t \rangle \simeq D_\infty$, and we choose $a_r^t, a_t^r \in \mathbb{C}^\times$ such that $a_r^t \alpha_r + a_t^r \alpha_t = 0$.

  Now we obtain a GGR-datum $(\delta_{rt}, a_r^t)_{r,t \in S, r \ne t}$.
  Let $V^\prime = \bigoplus_{s \in S} \mathbb{C} \alpha_s^\prime$ be the generalized geometric representation of $W$ defined by this datum, and $\pi: V^\prime \twoheadrightarrow V$ be the linear map $\alpha_s^\prime \mapsto \alpha_s$, $\forall s \in S$.
  We need to verify that $\pi$ is a homomorphism of $W$-representations, i.e., verify that
  \begin{equation}   \label{eq-thm-R1-gen-1}
   \pi(r \cdot \alpha_t^\prime) = r \cdot \pi (\alpha_t^\prime), \quad \forall r,t \in S.
  \end{equation}
  This is clear if $r = t$.
  If $\dim \mathbb{C} \langle \alpha_r, \alpha_t \rangle = 2$, then the verification is also routine as before.
  If $m_{rt} = \infty$ and $\alpha_r, \alpha_t$ are proportional, then we have
  \begin{align*}
    r \cdot \pi (\alpha_t^\prime) & = r \cdot \alpha_t = - \alpha_t, \\
    \pi(r \cdot \alpha_t^\prime) & = \pi(\alpha_t^\prime + 2 \frac{a_r^t}{a_t^r} \alpha_r^\prime) = \alpha_t + 2 \frac{a_r^t}{a_t^r} \alpha_r = - \alpha_t.
  \end{align*}
  The last equality uses the assumption that $a_r^t \alpha_r + a_t^r \alpha_t = 0$.
  Therefore, Equation  \eqref{eq-thm-R1-gen-1} is always true, and $\pi$ is a homomorphism of $W$-representations.

  Suppose $\pi^{\prime\prime}: V^{\prime\prime} \twoheadrightarrow V$ is another surjection of $W$-representations, where $V^{\prime\prime} = \bigoplus_{s \in S} \mathbb{C} \alpha_s^{\prime\prime}$ is a generalized geometric representation defined by a GGR-datum $(\zeta_{rt}, b_r^t)_{r,t \in S, r \ne t}$.
  After rescaling the vectors $\alpha_s^{\prime\prime}$, we may assume $\pi^{\prime \prime}(\alpha_s^{\prime \prime}) = \alpha_s$, $\forall s \in S$.
  For any $r,t \in S$ with $r \ne t$, if $\alpha_r$ and $\alpha_t$ are linearly independent, then $\pi^{\prime\prime}|_{\mathbb{C} \langle \alpha_r^{\prime\prime}, \alpha_t^{\prime\prime} \rangle}$ is an isomorphism of $\langle r,t \rangle$-representations, and then $\zeta_{rt} = \delta_{rt}$.
  If $\alpha_r$ and $\alpha_t$ are proportional, then $\pi^{\prime\prime}|_{\mathbb{C} \langle \alpha_r^{\prime\prime}, \alpha_t^{\prime\prime} \rangle}$ is a surjective homomorphism form $\zeta_{rt}$ to the sign representation $\varepsilon$ of $\langle r,t \rangle$.
  Therefore, $\zeta_{rt}$ must be $\varrho_4$, and thus $\zeta_{rt} = \delta_{rt}$.
  It follows that the two data $(\delta_{rt}, a_r^t)_{r,t \in S, r \ne t}$ and $(\zeta_{rt}, b_r^t)_{r,t \in S, r \ne t}$ have the same associated graph $\widetilde{G}$.

  Suppose now $\{r,t\}$ is an edge in $\widetilde{G}$.
  If $\delta_{rt} \ne \varrho_r^t$, then we consider the equality $\pi^{\prime\prime} (r \cdot \alpha_t^{\prime\prime}) = r \cdot \pi^{\prime\prime} (\alpha_t^{\prime\prime})$ as in the proof of Theorem \ref{thm-R1}, and then we obtain $\frac{a_r^t}{a_t^r} = \frac{b_r^t}{b_t^r}$.
  If $\delta_{rt} = \varrho_r^t$, then we consider instead the equality $\pi^{\prime\prime} (t\cdot \alpha_r^{\prime\prime}) = t \cdot \pi^{\prime\prime} (\alpha_r^{\prime\prime})$ to obtain the same result.
  Anyway, $(\delta_{rt}, a_r^t)_{r,t \in S, r \ne t}$ and $(\delta_{rt}, b_r^t)_{r,t \in S, r \ne t}$ have the same associated character of $H_1(\widetilde{G})$.
  Thus, the two  generalized geometric representations $V^\prime$ and $V^{\prime\prime}$ are isomorphic.

  At last, if the action of $W$ on $\ker \pi$ is not trivial, say $s \cdot v \ne v$ for some $s \in S$, $v \in \ker \pi$, then $\alpha_s \in \ker \pi$ which is a contradiction.
\end{proof}

\subsection{The matrix \texorpdfstring{$A$}{A}}

Let $V = \bigoplus_{s \in S} \mathbb{C} \alpha_s$ be a generalized geometric representation of $W$ defined by GGR-datum, say, $(\delta_{rt}, a_r^t)_{r,t \in S, r \ne t}$.
In consideration of Theorem \ref{thm-R1-gen}, we wish to find subrepresentations of $V$ with trivial $W$-action.
We write $S = \{s_1, \dots, s_n\}$, and use notations $P_{ij}, \delta_{ij}, \varrho_i^j$, etc. in the same way as before.

For $i = 1, \dots, n$, we define
\begin{align*}
    S_1(i) & := \{s_j \in S \mid j \ne i, m_{ij} < \infty\}, \\
    S_2(i) & := \{s_j \in S \mid m_{ij} = \infty, \delta_{ij} = \varrho_j^i \}, \\
    S_3(i) & := \{s_j \in S \mid m_{ij} = \infty, \delta_{ij} = \varrho_i^j \}, \\
    S_4(i) & := \{s_j \in S \mid m_{ij} = \infty, \delta_{ij} = \varrho_{z_{ij}}, z_{ij} \in \mathbb{C}\}.
\end{align*}
Then we have $S = S_1(i) \sqcup S_2(i) \sqcup S_3(i) \sqcup S_4(i) \sqcup \{s_i\}$.
If $s_j \in S_1(i)$, then we denote by $k_{ij}$ the natural number such that $\delta_{ij} = \rho_{k_{ij}}$.
If $s_j \in S_4(i)$, then we denote by $u_{ij}  : = u(z_{ij})$ the complex number chosen in Subsection \ref{subsec-inf-dih}.

Let $v = \sum_{j = 1}^n x_j \alpha_j \in V$ where each $x_j \in \mathbb{C}$.
Then we have

\begin{equation}   \label{eq-A-gen}
  \begin{aligned}
    s_i \cdot v & = -x_i \alpha_i + \sum_{s_j \in S_1(i)} x_j \Bigl(\alpha_j + 2 \frac{a_i^j}{a_j^i} \cos \frac{k_{ij} \uppi}{m_{ij}} \alpha_i\Bigr) + \sum_{s_j \in S_2(i)} x_j \alpha_j  \\
    & \mathrel{\phantom{=}} + \sum_{s_j \in S_3(i)} x_j \Bigl(\alpha_j + \frac{a_i^j}{a_j^i} \alpha_i \Bigr) + \sum_{s_j \in S_4(i)} x_j \Bigl( \alpha_j + u_{ij} \frac{a_i^j}{a_j^i} \alpha_i\Bigr) \\
    & = \Bigl(- x_i + \sum_{s_j \in S_1(i)} 2x_j \frac{a_i^j}{a_j^i} \cos \frac{k_{ij} \uppi}{m_{ij}} \\
    & \mathrel{\phantom{=}} \phantom{\Bigl(} + \sum_{s_j \in S_3(i)} x_j \frac{a_i^j}{a_j^i}  + \sum_{s_j \in S_4(i)} x_j u_{ij} \frac{a_i^j}{a_j^i} \Bigr) \alpha_i + \sum_{j \neq i} x_j \alpha_j. 
  \end{aligned}
\end{equation}
From Equation \eqref{eq-A-gen}, we see that $v$ stays invariant under the action of $W$ if and only if the coordinate vector $\boldsymbol{x} := (x_1, \dots, x_n)^\mathrm{T}$ satisfies the equation $A \cdot \boldsymbol{x} = 0$ where the matrix $A = (A_{ij})$ is defined by (here $A_{ij}$ denotes the entry at $(i,j)$-position of the matrix $A$)
\begin{equation*}
    A_{ij} = \begin{cases}
      1, & \mbox{if } i = j, \\
      - \frac{a_i^j}{a_j^i} \cos \frac{k_{ij} \uppi}{m_{ij}}, & \mbox{if } s_j \in S_1(i), \\
      0, & \mbox{if } s_j \in S_2(i), \\
      - \frac{a_i^j}{2 a_j^i}, & \mbox{if } s_j \in S_3(i), \\
      - \frac{u_{ij} a_i^j}{2 a_j^i}, & \mbox{if } s_j \in S_4(i).
    \end{cases}
\end{equation*}
As in Subsection \ref{subsec-red}, the null space of $A$ is the subspace of $V$ with trivial $W$-action.

\subsection{Other reducibility} \label{subsec-red-gen}
Unlike Proposition \ref{prop-reducibility}, there are some different phenomena when considering reducibility of generalized geometric representations without the assumption $m_{st}< \infty, \forall s,t \in S$.
For example, the representations $\varrho_t^r$ and $\varrho_r^t$ of $D_\infty$ have subrepresentations with nontrivial group action.

Suppose $V = \bigoplus_{s \in S} \mathbb{C} \alpha_s$ is a generalized geometric representation of $W$.
For any pair $r,t \in S$ with $r \neq t$, if $\delta_{rt} = \varrho_r^t$, then we assign a direction $t \to r$ to the edge $\{r,t\}$.
Symmetrically, we have $r \to t$ if $\delta_{rt} = \varrho_t^r$.
For other edges $\{r,t\}$ in the associated graph $\widetilde{G}$, both directions are assigned, i.e., $r \to t$ and $t \to r$.
These directions have the following meaning:
\begin{equation} \label{eq-other-red}
  \text{if $r \to t$ then $\alpha_t$ belongs to the subrepresentation generated by $\alpha_r$.}
\end{equation}

We say $t \dle r$ if there is a sequence $t = s_1, s_2, \dots, s_n = r$ in $S$ such that $s_{i+1} \to s_i$ for all $i$.
Then $\dle$ is a pre-order on $S$.
The corresponding equivalence relation is denoted by $\dsim$, i.e., we say $r \dsim t$ if $r \dle t$ and $t \dle r$.

Immediately, we have the following corollary by the fact \eqref{eq-other-red}.

\begin{corollary}
Let $V$ be a generalized geometric representation as above.
\begin{enumerate}
    \item If all elements in $S$ are equivalent with respect to $\dsim$, then the action of $W$ on any proper subrepresentation of $V$ is trivial.
    \item Let $I \subsetneq S$ be an equivalent class with respect to $\dsim$, then $\bigoplus_{s \dle I} \mathbb{C} \alpha_s$ and $\bigoplus_{s \dle I, s \notin I} \mathbb{C} \alpha_s$ are subrepresentations of $V$.
    \item Let $V_I$ denote the quotient of representations 
        $$V_I := \bigl(\bigoplus_{s \dle I} \mathbb{C} \alpha_s \bigr) / \bigl(\bigoplus_{s \dle I, s \notin I} \mathbb{C} \alpha_s \bigr).$$
        Then either $V_I$ is irreducible, or $V_I$ admits a maximal subrepresentation $V_0$ on which $W$ acts trivially  such that $V_I / V_0$ is irreducible.
\end{enumerate}
\end{corollary}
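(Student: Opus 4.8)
The plan is to reduce all three assertions to a single local observation about the action, and then to exploit the transitivity of $\dle$ together with the defining property \eqref{eq-other-red}. First I would record the quantitative form of the arrow relation. By Lemma \ref{lem-refl}, for distinct $w,s\in S$ there is a unique scalar $c_{ws}\in\mathbb{C}$ with $w\cdot\alpha_s=\alpha_s+c_{ws}\alpha_w$. Reading off the action of each dihedral type ($\rho_k$, $\varrho_z$, $\varrho_r^t$, $\varrho_t^r$) from Subsection \ref{subsec-inf-dih} one checks that $c_{ws}\neq 0$ if and only if the arrow $s\to w$ is assigned; this is precisely the computation underlying \eqref{eq-other-red}. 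Two consequences I would use repeatedly: (i) if $c_{ws}\neq 0$ then $s\to w$, hence $w\dle s$ via a one-step chain; and (ii) if $\alpha_a$ lies in a subrepresentation $U$ and $a\to b$, then $c_{ba}\neq 0$ and so $\alpha_b=c_{ba}^{-1}(b\cdot\alpha_a-\alpha_a)\in U$.

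For the first assertion, suppose $U\subsetneq V$ is a proper subrepresentation on which some $w\in S$ acts nontrivially, say $w\cdot v\neq v$ with $v\in U$. Then $v-w\cdot v\in\mathbb{C}^\times\alpha_w$ by Lemma \ref{lem-refl}, so $\alpha_w\in U$. Since all elements of $S$ are $\dsim$-equivalent, every $t\in S$ satisfies $t\dle w$, i.e.\ is joined to $w$ by a chain $w=s_n\to\cdots\to s_1=t$; applying (ii) along the chain forces $\alpha_{s_{n-1}},\dots,\alpha_{s_1}=\alpha_t\in U$. Thus $U=V$, a contradiction.

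For the second assertion I would write $U=\bigoplus_{s\dle I}\mathbb{C}\alpha_s$ and $U'=\bigoplus_{s\dle I,\,s\notin I}\mathbb{C}\alpha_s$, where $s\dle I$ means $s\dle i$ for one (hence every, by transitivity and $i\dsim i'$) $i\in I$. To see $U$ is $W$-stable, take $s\dle I$ and a generator $w$: in $w\cdot\alpha_s=\alpha_s+c_{ws}\alpha_w$, if $c_{ws}\neq 0$ then $w\dle s\dle I$ by (i) and transitivity, so $\alpha_w\in U$. For $U'$ the same argument gives $w\dle I$; and if moreover $w\in I$, then $w\dle s$ together with $s\dle I\ni w$ would yield $s\dsim w$, i.e.\ $s\in I$, contradicting $s\notin I$, so $w\notin I$ and $\alpha_w\in U'$. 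Hence both spans are subrepresentations.

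For the third assertion, the quotient $V_I=U/U'$ has basis $\{\bar\alpha_i\mid i\in I\}$. Any generator $w\notin I$ acts trivially on $V_I$: indeed $w\cdot\alpha_i\in U$ forces $c_{wi}=0$ when $w\not\dle I$, while $\bar\alpha_w=0$ when $w\dle I$ with $w\notin I$. Thus $V_I$ is exactly the generalized geometric representation of the parabolic subgroup $W_I=\langle I\rangle$ attached to the restricted datum $(\delta_{ij},a_i^j)_{i,j\in I}$, with reflection vectors $\{\bar\alpha_i\}$. I would then verify that $I$ is a single $\dsim$-class for this restricted datum: for $i,i'\in I$, a chain realizing $i\dle i'$ has every intermediate vertex $s$ satisfying $i\dle s\dle i'$, whence $s\dsim i$ and $s\in I$, so the chain stays inside $I$. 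Applying part (1) to $(W_I,I,V_I)$ then shows that every proper subrepresentation of $V_I$ has trivial action. Setting $V_0=\{v\in V_I\mid w\cdot v=v\ \forall w\in W\}$ — which is proper, since each $i\in I$ negates $\bar\alpha_i$ — we conclude that $V_0$ contains every proper subrepresentation, so either $V_0=0$ and $V_I$ is irreducible, or $V_0\neq 0$ is maximal with $V_I/V_0$ irreducible. The main obstacle I anticipate is bookkeeping rather than conceptual: fixing the precise correspondence $c_{ws}\neq 0\iff s\to w$ with the correct orientation for each dihedral type, and confirming in the last step that $I$ remains a single $\dsim$-class for the restricted datum, so that the recursion into part (1) is legitimate.
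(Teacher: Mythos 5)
Your proposal is correct and takes essentially the same route as the paper, which dismisses this corollary as immediate from the fact \eqref{eq-other-red}: your dictionary $c_{ws}\neq 0 \iff s \to w$ and the chain inductions along $\dle$ are precisely the bookkeeping the paper leaves implicit. The reduction of part (3) to part (1) via the parabolic subgroup $W_I$ — including your check that $I$ stays a single $\dsim$-class for the restricted datum because any chain between elements of $I$ remains inside $I$ — is the intended argument, so no gap remains.
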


\begin{example}
Suppose the Coxeter graph $G$ of the Coxeter group $(W,S)$ is as follows,
  \begin{equation*}
  \begin{tikzpicture}
    \node [circle, draw, inner sep=2pt, label=left:$s$] (s) at (0,0) {};
    \node [circle, draw, inner sep=2pt, label=left:$t$] (t) at (0,1) {};
    \node [circle, draw, inner sep=2pt, label=right:$r$] (r) at (2,0) {};
    \node [circle, draw, inner sep=2pt, label=right:$u$] (u) at (2,1) {};
    \draw (s) -- (t) -- (u) -- (r) -- (s);
    \node (tu) at (1, 1.2) {$\infty$};
    \node (sr) at (1,-0.2) {$\infty$};
  \end{tikzpicture}
  \end{equation*}
  If we are given a GGR-datum such that $\delta_{tu} = \varrho_u^t$ and $\delta_{sr} = \varrho_r^s$, then $\mathbb{C} \langle \alpha_u, \alpha_r \rangle$ is a subrepresentation of the corresponding generalized geometric representation of $W$.
\end{example}

\subsection{Bilinear forms}
Different from Theorem \ref{prop-bil}\eqref{prop-bil-1}, it may happen that there is no nonzero $W$-invariant bilinear form on a generalized geometric representation $V$ even when $\chi (H_1 (\widetilde{G})) \subseteq \{\pm 1\}$.
For example, Let $S = \{s, t, r\}$, $m_{st} = \infty$, $m_{rt} = m_{sr} = 3$, and let $\delta_{st} = \varrho_s^t$.
Suppose $(-|-)$ is a $W$-invariant bilinear form on $V$.
By Lemma \ref{lem-2.11}\eqref{2.4.4}, it holds that $(\alpha_s| \alpha_s) = 0$.
By Lemma \ref{lem-2.7}, we have $ (\alpha_r| \alpha_r) = 0$ and then $(\alpha_t| \alpha_t) = 0$.
Therefore $(-|-)$ has to be zero.

If for any pair $r,t \in S$ with $r \ne t$ it holds $\delta_{rt} \neq \varrho_r^t$ and $\varrho_t^r$, then Theorem \ref{prop-bil}\eqref{prop-bil-1} still works.

\section{Reflection representations} \label{sec-refl-rep}

In this section, we investigate reflection representations without any linearly independence condition on the reflection vectors.

\subsection{Reflection representations and R-representations}

Let $(V, \rho)$ be a reflection representation of $(W,S)$.
By Lemma \ref{lem-refl},  the reflection vectors $\{\alpha_s \mid s \in S\}$ span a subrepresentation $U$, and $W$ acts on the quotient $V/U$ trivially.
Thus, there is no harm in assuming that
\begin{equation*}
  \text{$V$ is spanned by $\{\alpha_s \mid s \in S\}$.}
\end{equation*}

Recall that the definition of the R-representations requires that the vectors $\alpha_r$ and $\alpha_t$ are not proportional whenever $m_{rt} < \infty$.
Thus it remains to consider the possibility that $\alpha_r = \alpha_t$ (after rescaling) for some $r,t \in S$ with $m_{rt} < \infty$.
However, the following lemma is saying that in this case the action of $r$ and $t$ must be equal, i.e., $\rho(r) = \rho(t)$.

\begin{lemma} \label{lem-refl-6}
  Let $V$ be a vector space and $0 \ne \alpha \in V$.
  Suppose $\sigma, \tau : V \to V$ are two reflections such that $\sigma \cdot \alpha = \tau \cdot \alpha = - \alpha$.
  If there exists $m \in \mathbb{N}$ such that $(\sigma \tau)^m = \Id_V$, then $\sigma = \tau$.
\end{lemma}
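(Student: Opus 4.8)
The plan is to encode each reflection as an explicit rank-one perturbation of the identity and then read off the order of $\sigma\tau$ directly. Since $\sigma$ is a reflection with $\sigma\cdot\alpha = -\alpha$ and $\alpha \neq 0$, the vector $\alpha$ is a reflection vector of $\sigma$, so by Definition \ref{def-refl-rep} we have $V = H_\sigma \oplus \mathbb{C}\alpha$ with $\sigma|_{H_\sigma} = \Id$. First I would introduce the linear functional $f_\sigma : V \to \mathbb{C}$ determined by $\ker f_\sigma = H_\sigma$ and $f_\sigma(\alpha) = 2$, so that $\sigma\cdot v = v - f_\sigma(v)\,\alpha$ for all $v \in V$; that such a functional exists and records the reflection is exactly the content of Lemma \ref{lem-refl}, which guarantees $v - \sigma\cdot v \in \mathbb{C}\alpha$. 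The same construction applied to $\tau$ yields $f_\tau$ with $f_\tau(\alpha) = 2$ and $\tau\cdot v = v - f_\tau(v)\,\alpha$.

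Next I would compute the product $g := \sigma\tau$. A direct substitution, using $f_\sigma(\alpha) = 2$, gives
\begin{equation*}
  g\cdot v = v - \bigl(f_\sigma - f_\tau\bigr)(v)\,\alpha, \quad \forall v \in V.
\end{equation*}
Writing $h := f_\sigma - f_\tau$, the crucial observation is that $h(\alpha) = f_\sigma(\alpha) - f_\tau(\alpha) = 2 - 2 = 0$, so $\alpha \in \ker h$ and $(g - \Id)^2 = 0$, i.e.\ $g$ is unipotent of a particularly simple shape. An easy induction on $k$, again exploiting $h(\alpha) = 0$, then shows
\begin{equation*}
  g^k \cdot v = v - k\,h(v)\,\alpha, \quad \forall v \in V, \ \forall k \ge 1.
\end{equation*}

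Finally I would impose the hypothesis $(\sigma\tau)^m = \Id_V$ for some $m \ge 1$. The displayed formula forces $m\,h(v)\,\alpha = 0$ for all $v$; since $\alpha \neq 0$ and the base field $\mathbb{C}$ has characteristic zero, the integer $m \geq 1$ is a nonzero scalar, whence $h(v) = 0$ for every $v$, that is, $f_\sigma = f_\tau$. Then $\sigma\cdot v = v - f_\sigma(v)\alpha = v - f_\tau(v)\alpha = \tau\cdot v$ for all $v$, giving $\sigma = \tau$. This approach requires no case analysis, so there is no serious obstacle; the only points deserving care are the passage from the decomposition $V = H_\sigma \oplus \mathbb{C}\alpha$ to the clean formula $\sigma\cdot v = v - f_\sigma(v)\alpha$, and the observation $h(\alpha) = 0$. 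It is precisely the latter that makes $g$ genuinely unipotent rather than merely of finite order, so that the finiteness hypothesis $g^m = \Id$ collapses to $h = 0$ instead of permitting nontrivial rotations.
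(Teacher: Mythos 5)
Your proof is correct; I checked the key computation: $\sigma\cdot v = v - f_\sigma(v)\alpha$ with $f_\sigma(\alpha)=2$ gives $\sigma\tau = \Id_V - \alpha\,h(-)$ where $h := f_\sigma - f_\tau$, the cancellation $h(\alpha)=2-2=0$ is right, and the induction $(\sigma\tau)^k\cdot v = v - k\,h(v)\,\alpha$ goes through precisely because of it. The paper proves the same lemma by the same underlying mechanism but packaged as a contradiction with a witness vector: it supposes $H_\sigma \ne H_\tau$, picks $v \in H_\tau \setminus H_\sigma$, writes $v = v_\sigma + a\alpha$ with $a \in \mathbb{C}^\times$, and computes $(\sigma\tau)\cdot v = \sigma\cdot v = v - 2a\alpha$, hence $(\sigma\tau)^m\cdot v = v - 2ma\alpha \ne v$, contradicting $(\sigma\tau)^m = \Id_V$. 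Both arguments rest on the observation that $\sigma\tau$ is a transvection-type unipotent map (your $(\sigma\tau - \Id_V)^2 = 0$ makes this explicit) whose powers grow linearly in the $\alpha$-direction, so finite order forces triviality in characteristic zero. What your version buys: it is direct rather than by contradiction, needs no choice of witness vector (the paper must observe $a \ne 0$ for its chosen $v$), and yields the cleaner conclusion $f_\sigma = f_\tau$, i.e.\ equality of the hyperplanes and the reflections simultaneously; the paper's version is shorter, evaluating on a single vector instead of establishing an operator identity. Two minor remarks. First, your citation of Lemma \ref{lem-refl} is slightly misplaced: the existence of $f_\sigma$ with $\ker f_\sigma = H_\sigma$, $f_\sigma(\alpha)=2$ follows directly from the decomposition $V = H_\sigma \oplus \mathbb{C}\alpha$ in Definition \ref{def-refl-rep}, whereas Lemma \ref{lem-refl} records only the weaker consequence $v - \sigma\cdot v \in \mathbb{C}\alpha$; nothing breaks, but the functional should be constructed from the decomposition, not deduced from that lemma. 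Second, your explicit restriction to $m \ge 1$ is necessary (for $m = 0$ the hypothesis is vacuous and the conclusion fails), and in fact makes transparent a point the paper's proof leaves implicit.
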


\begin{proof}
  Let $H_\sigma$ and $H_\tau$ be the reflection hyperplanes of $\sigma$ and $\tau$ respectively.
  Suppose $H_\sigma \ne H_\tau$.
  Then there exists $v \in H_\tau \setminus H_\sigma$.
  Note that $V = H_\sigma \oplus \mathbb{C} \alpha$.
  Thus we can write $v = v_\sigma + a \alpha$ where $v_\sigma \in H_\sigma$ and $a \in \mathbb{C}^\times$.
  By direct computations, we have 
  $$(\sigma \tau) \cdot v = \sigma \cdot v = v_\sigma - a \alpha = v - 2a \alpha.$$
  Therefore, $(\sigma \tau)^m \cdot v = v - 2ma \alpha$ for any $m \in \mathbb{N}$, which contradicts $(\sigma \tau)^m = \Id_V$.
  Thus, $H_\sigma = H_\tau$ and then $\sigma  = \tau$.
\end{proof}

Now we return to the reflection representation $(V,\rho)$.
We define an equivalence relation $\papp$ on the set $S$ as follows.
We write $r \psim t$ if $\alpha_r$ and $\alpha_t$ are proportional and $m_{rt} <\infty$.
Clearly, $\psim$ is a reflexive relation.
Let $\papp$ be the equivalence relation generated by $\psim$, i.e., $r \papp t$ if $r \psim r_1 \psim \cdots \psim r_k \psim t$ for some $r_1, \dots, r_k \in S$.

If $r,t \in S$ such that $r \papp t$, then by definition $\alpha_r$ and $\alpha_t$ are proportional.
After rescaling, we can set $\alpha_r = \alpha_t$ whenever $r \papp t$.
Therefore, if $S = I_1 \sqcup \dots \sqcup I_k$ is the partition of $S$ into equivalence classes with respect to $\papp$, then we can denote by $\alpha_i$ the reflection vector of elements in $I_i$ ($i = 1, \dots, k$), and then $V = \sum_{1 \le i \le k} \mathbb{C} \alpha_i$.
Let $\{ s_1, \dots, s_k  \mid  s_i \in I_i \}$ be a set of representatives.

\begin{lemma} \label{lem-6-2}
  Let $(V,\rho)$, $\papp$, $\alpha_1, \dots, \alpha_k$, $s_1, \dots, s_k$ be as above.
  \begin{enumerate}
    \item \label{lem-6-2-1} If $r,t\in S$ such that $r \papp t$, then $\rho(r) = \rho(t)$.
    \item The image $\rho(W) \subseteq GL(V)$ is generated by $\rho(s_1), \dots, \rho(s_k)$ as a group.
    \item \label{lem-6-2-3} If $r \in I_i$ and $t \in I_j$ with $i \ne j$, and if $m_{rt} < \infty$, then $\alpha_i$ and $\alpha_j$ are linearly independent.
  \end{enumerate}
\end{lemma}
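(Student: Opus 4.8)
The plan is to derive all three parts from Lemma~\ref{lem-refl-6} together with the definitions of the relations $\psim$ and $\papp$; the substantive content is~\eqref{lem-6-2-1}, from which (2) and~\eqref{lem-6-2-3} follow almost formally. For~\eqref{lem-6-2-1} I would first treat the generating relation. Suppose $r \psim t$, so that $\alpha_r, \alpha_t$ are proportional and $m_{rt} < \infty$; after the global rescaling we have $\alpha_r = \alpha_t =: \alpha$. Since $V$ is a reflection representation, both $\rho(r)$ and $\rho(t)$ are reflections on $V$, and by the choice of reflection vectors $\rho(r)\cdot\alpha = \rho(t)\cdot\alpha = -\alpha$. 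Moreover $(\rho(r)\rho(t))^{m_{rt}} = \rho\bigl((rt)^{m_{rt}}\bigr) = \Id_V$ because $m_{rt} < \infty$. Thus Lemma~\ref{lem-refl-6}, applied with $\sigma = \rho(r)$, $\tau = \rho(t)$ and $m = m_{rt}$, gives $\rho(r) = \rho(t)$.

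For an arbitrary pair with $r \papp t$, I would pick a chain $r \psim r_1 \psim \cdots \psim r_\ell \psim t$ witnessing the equivalence and apply the previous step to each link, obtaining $\rho(r) = \rho(r_1) = \cdots = \rho(r_\ell) = \rho(t)$. This proves~\eqref{lem-6-2-1}. Part (2) is then immediate: the group $\rho(W)$ is generated by $\{\rho(s) \mid s \in S\}$ since $S$ generates $W$, and by~\eqref{lem-6-2-1} each $\rho(s)$ equals $\rho(s_i)$, where $s_i$ is the chosen representative of the class $I_i \ni s$; hence $\rho(s_1), \dots, \rho(s_k)$ already generate the image.

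For~\eqref{lem-6-2-3} I would argue contrapositively by unwinding definitions. Suppose $r \in I_i$, $t \in I_j$ with $i \ne j$ and $m_{rt} < \infty$, but $\alpha_i$ and $\alpha_j$ are linearly dependent. As $\alpha_r = \alpha_i$ and $\alpha_t = \alpha_j$ are nonzero, linear dependence means they are proportional; combined with $m_{rt} < \infty$ this is precisely the defining condition of $r \psim t$, so $r \papp t$ and therefore $I_i = I_j$, i.e.\ $i = j$, contradicting $i \ne j$. I do not anticipate a genuine obstacle: the only nontrivial input is Lemma~\ref{lem-refl-6}, whose hypotheses are tailored exactly to the base case of~\eqref{lem-6-2-1}. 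The single point deserving care is the eigenvector bookkeeping in that base case, which the normalization $\alpha_r = \alpha_t$ for $r \papp t$ makes transparent.
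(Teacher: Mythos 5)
Your proof is correct and takes essentially the same route as the paper, whose entire proof reads ``Clear by Lemma \ref{lem-refl-6} and the definition of $\papp$'': your application of Lemma \ref{lem-refl-6} with $\sigma = \rho(r)$, $\tau = \rho(t)$, $m = m_{rt}$ along each link of a $\psim$-chain is exactly the intended expansion of part \eqref{lem-6-2-1}, and parts (2) and \eqref{lem-6-2-3} indeed follow formally as you argue. The eigenvector bookkeeping you flag is handled correctly (and would go through even without the rescaling, since $\alpha_t = c\,\alpha_r$ already gives $\rho(t)\cdot\alpha_r = -\alpha_r$).
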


\begin{proof}
  Clear by Lemma \ref{lem-refl-6} and the definition of $\papp$.
\end{proof}

For two indices $i,j$ with $i \ne j$, we set
\begin{equation}   \label{eq-dij}
  d_{ij} := \gcd \{m_{rt} \mid r \in I_i, t \in I_j, m_{rt} < \infty\}.
\end{equation}
(Here ``gcd'' means the greatest common divisor.)
By convention, we set $\gcd \emptyset = \infty$.
It is clear that $d_{ij} = d_{ji}$.
Moreover, we have the following fact.

\begin{lemma} \label{lem-6-3} Let $i,j$ be two indices with $i \ne j$.
  \begin{enumerate}
    \item \label{lem-6-3-1} If $d_{ij} < \infty$, then $(\rho(s_i) \rho(s_j))^{d_{ij}} = \Id_V$.
    \item \label{lem-6-3-2} In particular, $d_{ij} > 1$.
  \end{enumerate}
\end{lemma}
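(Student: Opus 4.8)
The plan is to leverage Lemma~\ref{lem-6-2}\eqref{lem-6-2-1}, which collapses the action of every generator in a class $I_i$ to the single map $\rho(s_i)$, so that the Coxeter relations of $W$ transfer directly to the generators $\rho(s_i), \rho(s_j)$ of $\rho(W)$. For part~\eqref{lem-6-3-1}, set $g := \rho(s_i)\rho(s_j)$ and assume $d_{ij} < \infty$, so that by \eqref{eq-dij} the index set $\{(r,t) \mid r \in I_i,\, t \in I_j,\, m_{rt} < \infty\}$ is nonempty. First I would fix such a pair $(r,t)$: since $(rt)^{m_{rt}} = e$ in $W$, applying $\rho$ gives $(\rho(r)\rho(t))^{m_{rt}} = \Id_V$, and by Lemma~\ref{lem-6-2}\eqref{lem-6-2-1} we have $\rho(r) = \rho(s_i)$ and $\rho(t) = \rho(s_j)$, whence $g^{m_{rt}} = \Id_V$. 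In particular $g$ has finite order $N$ with $N \mid m_{rt}$ for every pair in the index set; thus $N$ is a common divisor of all these $m_{rt}$ and so divides their greatest common divisor $d_{ij}$, giving $g^{d_{ij}} = \Id_V$.

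For part~\eqref{lem-6-3-2} I would argue by contradiction. If $d_{ij} = \infty$ there is nothing to prove, so suppose $d_{ij} < \infty$ and $d_{ij} = 1$. Then part~\eqref{lem-6-3-1} forces $g = \Id_V$, and since $\rho(s_j)^2 = \Id_V$ this yields $\rho(s_i) = \rho(s_j)$. Both are reflections, so this common map has reflection vector both $\alpha_i$ and $\alpha_j$; by the uniqueness of the reflection vector (Definition~\ref{def-refl-rep}) the two are proportional. But the index set is nonempty, so we may pick $r \in I_i$, $t \in I_j$ with $m_{rt} < \infty$, and Lemma~\ref{lem-6-2}\eqref{lem-6-2-3} then asserts that $\alpha_i, \alpha_j$ are linearly independent---a contradiction. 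Hence $d_{ij} > 1$.

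The argument is essentially bookkeeping once Lemma~\ref{lem-6-2} is available; the only steps needing care are the divisibility passage from ``$N$ divides each $m_{rt}$'' to ``$N \mid d_{ij}$'' (a common divisor divides the gcd), and the repeated use of the hypothesis $d_{ij} < \infty$ to guarantee that the index set is nonempty---without which $g$ need not have finite order in part~\eqref{lem-6-3-1} and Lemma~\ref{lem-6-2}\eqref{lem-6-2-3} could not be invoked in part~\eqref{lem-6-3-2}.
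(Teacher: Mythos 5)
Your proposal is correct and follows essentially the same route as the paper's proof: both parts use Lemma \ref{lem-6-2}\eqref{lem-6-2-1} to get $(\rho(s_i)\rho(s_j))^{m_{rt}} = \Id_V$ for each pair with $m_{rt} < \infty$, pass to the gcd $d_{ij}$, and then rule out $d_{ij} = 1$ by deriving $\rho(s_i) = \rho(s_j)$ and contradicting the linear independence in Lemma \ref{lem-6-2}\eqref{lem-6-2-3}. Your write-up merely makes explicit two steps the paper leaves implicit (the order of $g$ divides each $m_{rt}$ and hence their gcd; equal reflections have proportional reflection vectors), which is fine.
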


\begin{proof}
  For any $r \in I_i$ and $t \in I_j$ with $m_{rt} < \infty$, by Lemma \ref{lem-6-2} we have $\rho(s_i) = \rho(r)$, $\rho(s_j) = \rho(t)$, and hence $(\rho(s_i) \rho(s_j))^{m_{rt}} = \Id_V$.
  Since $d_{ij}$ is the greatest common divisor of such $m_{rt}$'s, we have $(\rho(s_i) \rho(s_j))^{d_{ij}} = \Id_V$.
  
  If $d_{ij} = 1$, then $\rho(s_i) = \rho(s_j)$. 
  But $d_{ij} < \infty$ implies that $m_{rt} < \infty$ for some $r \in I_i$ and $t \in I_j$.
  By Lemma \ref{lem-6-2}\eqref{lem-6-2-3} this is impossible.
\end{proof}

Now we define a Coxeter group $(\overline{W}, \overline{S})$ as follows. 
Let $\overline{S} := \{\theta_1, \dots, \theta_k\}$, and $\overline{W}$ be the Coxeter group generated by $\overline{S}$ with Coxeter matrix $(d_{ij})_{1 \le i,j \le k}$ (we set $d_{ii} = 1$).
By Lemma \ref{lem-6-3} we have the following factorization of $\rho$.

\begin{theorem} \label{thm-refl}
  Notations as above.
  \begin{enumerate}
    \item \label{thm-refl-1} We have a surjective homomorphism $\pi_V : W \twoheadrightarrow \overline{W}$ such that $\pi_V (s) = \theta_i$ if $s \in I_i$.
    \item \label{thm-refl-2} The map $\theta_i \mapsto \rho(s_i)$ defines an R-representation $\overline{\rho}: \overline{W} \to GL(V)$.
    \item \label{thm-refl-3} The reflection representation $(V, \rho)$ of $W$ factors through $(\overline{W}, \overline{S})$ as follows,
        \begin{equation*}
           \xymatrix{W \ar[rr]^{\rho} \ar@{->>}[rd]_{\pi_V}& & GL(V) \\
             & \overline{W} \ar[ur]_{\overline{\rho}} &}
         \end{equation*}
  \end{enumerate}
\end{theorem}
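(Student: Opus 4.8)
The plan is to establish all three parts by invoking the universal property of Coxeter presentations together with the two structural lemmas already proved (Lemma \ref{lem-6-2} and Lemma \ref{lem-6-3}). Since $W$ is presented by $\langle s \in S \mid s^2 = e, (st)^{m_{st}} = e \rangle$, to construct the homomorphism $\pi_V$ of part \eqref{thm-refl-1} it suffices to send each $s \in I_i$ to $\theta_i$ and verify that the defining relations of $W$ are honored by these images in $\overline{W}$. The relation $s^2 = e$ maps to $\theta_i^2 = e$, which holds since each $\theta_i$ is an involution. For a braid relation $(st)^{m_{st}} = e$ with $s \in I_i$, $t \in I_j$ and $m_{st} < \infty$, there are two cases: if $i = j$ then both generators map to $\theta_i$ and $(\theta_i \theta_i)^{m_{st}} = e$ trivially; if $i \ne j$, the key observation is that $d_{ij} \mid m_{st}$ by the very definition of $d_{ij}$ as a greatest common divisor in Equation \eqref{eq-dij}, and the order of $\theta_i \theta_j$ in $\overline{W}$ is exactly $d_{ij}$, so $(\theta_i \theta_j)^{m_{st}} = e$. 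When $m_{st} = \infty$ no relation is imposed, so nothing is to be checked. Surjectivity of $\pi_V$ is immediate because its image contains all of $\overline{S}$.

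For part \eqref{thm-refl-2}, I would first show that $\overline{\rho}\colon \theta_i \mapsto \rho(s_i)$ is a well-defined homomorphism $\overline{W} \to GL(V)$, again by checking the Coxeter relations of $\overline{W}$: the relation $\theta_i^2 = e$ maps to $\rho(s_i)^2 = \Id_V$ since $\rho(s_i)$ is a reflection, and each relation $(\theta_i \theta_j)^{d_{ij}} = e$ with $d_{ij} < \infty$ maps to $(\rho(s_i)\rho(s_j))^{d_{ij}} = \Id_V$, which is exactly Lemma \ref{lem-6-3}\eqref{lem-6-3-1}. It then remains to confirm the three conditions in Definition \ref{def-R}: each $\overline{\rho}(\theta_i) = \rho(s_i)$ is a reflection with reflection vector $\alpha_i$, so $V$ is a reflection representation; $V = \sum_{1 \le i \le k} \mathbb{C}\alpha_i$ is spanned by these reflection vectors; and whenever $d_{ij} < \infty$ the gcd in Equation \eqref{eq-dij} is taken over a nonempty index set, so there exist $r \in I_i$, $t \in I_j$ with $m_{rt} < \infty$, whence $\alpha_i, \alpha_j$ are linearly independent by Lemma \ref{lem-6-2}\eqref{lem-6-2-3}. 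This verifies that $\overline{\rho}$ is an R-representation.

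Finally, part \eqref{thm-refl-3} is just a check on generators: for $s \in I_i$ we have $\overline{\rho}(\pi_V(s)) = \overline{\rho}(\theta_i) = \rho(s_i) = \rho(s)$, the last equality by Lemma \ref{lem-6-2}\eqref{lem-6-2-1} since $s \papp s_i$. As $S$ generates $W$, this forces $\rho = \overline{\rho} \circ \pi_V$.

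The proof is essentially bookkeeping once the lemmas are in hand; the only genuinely load-bearing point, and the one I would be most careful about, is the compatibility between the divisibility $d_{ij} \mid m_{st}$ and the fact that $\theta_i\theta_j$ has order precisely $d_{ij}$ in $\overline{W}$. This single fact is what makes both $\pi_V$ well defined (the relations of $W$ survive) and $\overline{\rho}$ well defined (the relations of $\overline{W}$ are respected by $\rho$), and it rests on the standard property that in a Coxeter group the product of two distinct generators has order equal to the corresponding entry of the Coxeter matrix. Everything else follows mechanically from Lemmas \ref{lem-6-2} and \ref{lem-6-3}.
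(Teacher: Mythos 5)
Your proposal is correct and takes essentially the same approach as the paper: part \eqref{thm-refl-1} via the divisibility $d_{ij} \mid m_{st}$ from Equation \eqref{eq-dij} together with the defining relation $(\theta_i\theta_j)^{d_{ij}} = e_{\overline{W}}$, part \eqref{thm-refl-2} via Lemma \ref{lem-6-3}\eqref{lem-6-3-1} and Lemma \ref{lem-6-2}\eqref{lem-6-2-3}, and part \eqref{thm-refl-3} by checking on generators using Lemma \ref{lem-6-2}\eqref{lem-6-2-1}. One small remark: your appeal to the order of $\theta_i\theta_j$ being \emph{exactly} $d_{ij}$ is stronger than necessary (that exactness is a nontrivial theorem about Coxeter presentations), since for the well-definedness of $\pi_V$ only the relation $(\theta_i\theta_j)^{d_{ij}} = e_{\overline{W}}$ is needed, which holds by fiat from the presentation of $\overline{W}$ --- this is precisely what the paper's computation $(\theta_i\theta_j)^{m_{rt}} = (\theta_i\theta_j)^{d_{ij}\cdot\frac{m_{rt}}{d_{ij}}} = e_{\overline{W}}$ uses.
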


\begin{proof}
  For \eqref{thm-refl-1}, it suffices to show $(\pi_V(r) \pi_V(t))^{m_{rt}} = e_{\overline{W}}$ whenever $r,t \in S$ with $m_{rt} < \infty$. 
  If $r = t$ or $r, t$ belong to the same $I_i$, then this is clear.
  Suppose $r \in I_i$ and $t \in I_j$ with $i \ne j$.
  Then we have 
  \begin{equation*}
    (\pi_V(r) \pi_V(t))^{m_{rt}} = (\theta_i \theta_j)^{m_{rt}} = (\theta_i \theta_j)^{d_{ij} \cdot \frac{m_{rt}}{d_{ij}}} = e_{\overline{W}}
  \end{equation*}
  as desired.
  
  By Lemma \ref{lem-6-3}\eqref{lem-6-3-1} and the fact that $\rho(s_i)$ is a reflection on $V$, the map $\theta_i \mapsto \rho(s_i)$ defines a reflection representation of $\overline{W}$ on $V$.
  If $d_{ij} < \infty$ for some $i,j$, then $m_{rt} < \infty$ for some $r \in I_i$ and $t \in I_j$.
  By Lemma \ref{lem-6-2}\eqref{lem-6-2-3}, $\alpha_i$ and $\alpha_j$ are linearly independent.
  This proves that $(V, \overline{\rho})$ is an R-representation of $\overline{W}$ (see Definition \ref{def-R}).
  
  The point \eqref{thm-refl-3} follows from Lemma \ref{lem-6-2}\eqref{lem-6-2-1}:
  \begin{equation*}
    \rho(s) = \rho(s_i) = \overline{\rho} (\theta_i) = \overline{\rho} \pi_V (s)
  \end{equation*}
  if $s \in I_i$.
\end{proof}

\begin{remark}
  The partition $S = I_1 \sqcup \cdots \sqcup I_k$, the group $(\overline{W}, \overline{S})$, and the R-representation $\overline{\rho}: \overline{W} \to GL(V)$ are uniquely determined by the isomorphism class of the reflection representation $\rho: W \to GL(V)$.
\end{remark}

In conclusion, a reflection representation of $W$ factors through an R-represen\-tation of a quotient group $\overline{W}$ of $W$.

\subsection{Admissible partitions}

In order to classify all the reflection representations of $(W, S)$, we need to determine which reflection vectors can be proportional.
This is equivalent to the question of what kind of partition $S = I_1 \sqcup \cdots \sqcup I_k$ will occur as in the last subsection. 
Lemma \ref{lem-6-3}\eqref{lem-6-3-2} indicates that not every partition of $S$ is permissible.
But in fact, the restriction in Lemma \ref{lem-6-3}\eqref{lem-6-3-2} is almost sufficient.

\begin{definition}
  Let $S = I_1 \sqcup \cdots \sqcup I_k$ be a partition of the set $S$.
  As in \eqref{eq-dij}, for two indices $i,j$ with $1 \le i \ne j \le k$ we set
  \begin{equation*}
    d_{ij} := \gcd \{m_{rt} \mid r \in I_i, t \in I_j, m_{rt} < \infty\}.
  \end{equation*}
  (Recall that we set $\gcd \emptyset = \infty$.)
  The partition is called an \emph{admissible partition} of $S$ 
  if the following two conditions are satisfied:
  \begin{enumerate}
    \item  $d_{ij} > 1$ for any indices $i,j$ with $i \ne j$;
    \item for each index $i$, $I_i$ can not be written as a disjoint union 
      $I_i = J \sqcup J^\prime$ such that $m_{rt} = \infty$ 
      for any $r \in J$ and $t \in J^\prime$.
  \end{enumerate}
\end{definition}

Let $S = I_1 \sqcup \cdots \sqcup I_k$ be an admissible partition.
We set $d_{ii} = 1$ for any $i$.
Then $(d_{ij})_{1 \le i,j \le k}$ is a Coxeter matrix.
We define a Coxeter group $(\overline{W}, \overline{S})$ with Coxeter matrix $(d_{ij})_{1 \le i,j \le k}$ and $\overline{S} = \{\theta_1, \dots, \theta_k\}$ as in the last subsection.
Then by the same reason of Theorem \ref{thm-refl}\eqref{thm-refl-1} we have a surjective homomorphism of groups $\pi : W \twoheadrightarrow \overline{W}$ such that $\pi (s) = \theta_i$ if $s \in I_i$.
Clearly, any R-representation of $\overline{W}$ pulls back to a reflection representation of $W$ along $\pi$.
Therefore we have the following classification of reflection representations.

\begin{theorem}
  The isomorphism classes of reflection representations (spanned by reflection vectors) of $W$ one-to-one correspond to the data 
  $\{(S = I_1 \sqcup \cdots \sqcup I_k, (V, \overline{\rho}))\}$
  where
  \begin{enumerate}
    \item $S = I_1 \sqcup \cdots \sqcup I_k$ is an admissible partition of $S$;
    \item $(V, \overline{\rho})$ is an (isomorphism class of) R-representation of $\overline{W}$, where $\overline{W}$ is a quotient of $W$ defined by the partition as in the last subsection.
  \end{enumerate}
\end{theorem}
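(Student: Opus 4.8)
The plan is to exhibit mutually inverse maps between the two sets and to check that they respect isomorphism. In one direction, starting from a reflection representation $(V,\rho)$ spanned by reflection vectors, Theorem \ref{thm-refl} together with the partition $S = I_1 \sqcup \cdots \sqcup I_k$ induced by the equivalence relation $\papp$ already produces a partition, a quotient group $\overline{W}$, and an R-representation $\overline{\rho}$ of $\overline{W}$ with $\rho = \overline{\rho} \circ \pi_V$; the remark following Theorem \ref{thm-refl} records that this triple depends only on the isomorphism class of $\rho$. So the first thing I would verify is that the partition coming from $\papp$ is \emph{admissible}. Condition (1) is exactly Lemma \ref{lem-6-3}\eqref{lem-6-3-2}. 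For condition (2), I would observe that each block $I_i$ is a single $\papp$-class, and consider the graph on $I_i$ whose edges join $r,t$ with $m_{rt} < \infty$; within $I_i$ all reflection vectors are proportional by Lemma \ref{lem-6-2}\eqref{lem-6-2-1}, so $r \psim t$ reduces to $m_{rt} < \infty$, and the transitive closure connecting all of $I_i$ forces this graph to be connected. A splitting $I_i = J \sqcup J'$ with all cross-values $m_{rt} = \infty$ would disconnect it, so no such splitting exists and (2) holds.

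In the other direction, given an admissible partition $S = I_1 \sqcup \cdots \sqcup I_k$ and an R-representation $(V, \overline{\rho})$ of the associated Coxeter group $(\overline{W}, \overline{S})$, I would set $\rho := \overline{\rho} \circ \pi$, where $\pi : W \twoheadrightarrow \overline{W}$ is the surjection with $\pi(s) = \theta_i$ for $s \in I_i$. This $\pi$ exists by the same computation as in Theorem \ref{thm-refl}\eqref{thm-refl-1}, using that $d_{ij}$ divides every finite $m_{rt}$ with $r \in I_i$, $t \in I_j$. Each $s \in I_i$ then acts by $\overline{\rho}(\theta_i)$, which is a reflection, and the reflection vectors span $V$; hence $(V, \rho)$ is a reflection representation of $W$ spanned by reflection vectors.

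The crux, and the step I expect to be the main obstacle, is to show that this pullback construction recovers exactly the data we started from, i.e.\ that the $\papp$-partition attached to $\rho = \overline{\rho} \circ \pi$ is precisely $I_1 \sqcup \cdots \sqcup I_k$. For two generators in the same block $I_i$ we have $\rho(r) = \rho(t) = \overline{\rho}(\theta_i)$, so their reflection vectors coincide up to a scalar; here condition (2) of admissibility is what guarantees that the finite-$m$ graph on $I_i$ is connected, hence that $I_i$ is a single $\papp$-class. For $r \in I_i$, $t \in I_j$ with $i \ne j$, I would show $r \not\papp t$ by ruling out any crossing step $r' \psim t'$ between distinct blocks $I_a, I_b$: such a step requires $m_{r't'} < \infty$, whence $d_{ab} < \infty$, and then the R-representation axiom (Definition \ref{def-R}) forces the reflection vectors of $\theta_a$ and $\theta_b$ to be linearly independent, contradicting the proportionality demanded by $\psim$. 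Thus the blocks are separated, the recovered partition is the original one, $\overline{W}$ is reconstructed from the matrix $(d_{ij})$, and $\overline{\rho}$ is returned unchanged.

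Finally I would assemble the two maps into a bijection. Composing reflection representation $\to$ data $\to$ reflection representation returns $\rho$ by Theorem \ref{thm-refl}\eqref{thm-refl-3}, while the reverse composite returns the original data by the argument above. Isomorphic reflection representations yield the same admissible partition and isomorphic $\overline{\rho}$ by the uniqueness remark following Theorem \ref{thm-refl}, and conversely a fixed admissible partition equipped with isomorphic R-representations of $\overline{W}$ pulls back along $\pi$ to isomorphic reflection representations of $W$. This establishes the claimed one-to-one correspondence.
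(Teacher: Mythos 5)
Your proposal is correct and takes essentially the same route as the paper, which obtains one direction from Theorem \ref{thm-refl} (the $\papp$-partition, the quotient $\overline{W}$, and the R-representation $\overline{\rho}$) and the other by pulling back R-representations of $\overline{W}$ along $\pi$. The paper leaves the mutual-inverse verification implicit (``Clearly, any R-representation of $\overline{W}$ pulls back\dots''), and your explicit checks---that the $\papp$-partition is admissible because $\psim$-edges require $m_{rt} < \infty$, and that the pullback's $\papp$-classes recover the original blocks, using condition (2) for connectivity within a block and the linear independence in Definition \ref{def-R} to rule out crossings between blocks---are precisely the details needed to justify that sketch.
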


\begin{example}
  The trivial partition of $S$ (i.e., $k=1$) corresponds to the sign representation of $W$.
\end{example}

The condition $d_{ij} > 1$ is in fact a strict restriction, as we shall see in the following examples. 
We say a partition $S = I_1 \sqcup \cdots \sqcup I_k$ is \emph{discrete} if each part $I_i$ consists of a single element.

\begin{example} \label{eg-610}
  Let $W$ be the symmetric group $\mathfrak{S}_{n+1}$, with Coxeter graph
  \begin{equation*}
    \begin{tikzpicture}
    \node [circle, draw, inner sep=2pt, label=below:$s_1$] (s0) at (-3,0) {};
    \node [circle, draw, inner sep=2pt, label=below:$s_2$] (s1) at (-2,0) {};
    \node [circle, draw, inner sep=2pt, label=below:$s_3$] (s2) at (-1,0) {};
    \node [circle, draw, inner sep=2pt, label=below:$s_{n-1}$] (sn-1) at (1,0) {};
    \node [circle, draw, inner sep=2pt, label=below:$s_{n}$] (sn) at (2,0) {};
    \draw (sn-1) -- (sn);
    \draw (s0) -- (s1) -- (s2);
    \draw (s2) -- (-0.5,0);
    \draw (sn-1) -- (0.5,0);
    \node (d) at (0,0) {$\dots$};
    \end{tikzpicture}
  \end{equation*}
  
  If $n = 3$, the partition $S = \{s_1, s_3\} \sqcup \{s_2\}$ is the only admissible partition which is non-trivial and non-discrete.
  
  Suppose $n \ge 4$, $S = I_1 \sqcup \cdots \sqcup I_k$ is an admissible partition and $s_i, s_j \in I_1$ with $1 \le i < j \le n$.
  If $s \in S$ is a vertex in the Coxeter graph which is adjacent to one of $s_i$ and $s_j$ while non-adjacent to the other (e.g., $s= s_{i-1}$ if $i > 1$, or $s= s_{i+1}$ if $j-i>2$), then $s$ must belong to $I_1$ as well because $m_{ss_i}$ and $m_{ss_j}$ are co-prime.
  In this way one easily deduces that the partition of $S$ must be trivial or discrete.
  In other words, a reflection representation of $\mathfrak{S}_{n+1}$ ($n \ge 4$) is either the sign representation or an R-representation (in fact, the only R-representation of $\mathfrak{S}_{n+1}$ is the geometric representation $\Vg$).
\end{example}

\begin{example}
  Consider the Coxeter group of type $\widetilde{\mathsf{A}}_n$ (see Example \ref{eg-affine-A}).
  
  If $n = 2$, then $S = \{s_0\} \sqcup \{s_1, s_2\}$ is an admissible partition.
  
  If $n = 3$, then $S = \{s_0, s_2\} \sqcup \{s_1, s_3\}$ is the only non-trivial and non-discrete admissible partition.
  
  If $n \ge 4$, then, similar to Example \ref{eg-610}, any admissible partition of $S$ is either trivial or discrete.
\end{example}

\bibliographystyle{amsplain}
\bibliography{refl-rep}

\providecommand{\bysame}{\leavevmode\hbox to3em{\hrulefill}\thinspace}
\providecommand{\MR}{\relax\ifhmode\unskip\space\fi MR }
\providecommand{\MRhref}[2]{%
  \href{http://www.ams.org/mathscinet-getitem?mr=#1}{#2}
}
\providecommand{\href}[2]{#2}
\begin{thebibliography}{10}

\bibitem{BB05}
Anders Bj\"{o}rner and Francesco Brenti, \emph{Combinatorics of {C}oxeter
  {G}roups}, Graduate Texts in Mathematics, vol. 231, Springer, New York, 2005.

\bibitem{Bourbaki-Lie456}
Nicolas Bourbaki, \emph{{Lie Groups and Lie Algebras. Chapters 4--6}}, Elements
  of Mathematics, Springer-Verlag, Berlin, 2002, translated from the 1968
  French original by Andrew Pressley.

\bibitem{BCNS15}
Vadim Bugaenko, Yonah Cherniavsky, Tatiana Nagnibeda, and Robert Shwartz,
  \emph{Weighted {C}oxeter graphs and generalized geometric representations of
  {C}oxeter groups}, Discrete Appl. Math. \textbf{192} (2015), 17--27.

\bibitem{Coxeter34}
H.~S.~M. Coxeter, \emph{Discrete groups generated by reflections}, Ann. of
  Math. (2) \textbf{35} (1934), no.~3, 588--621.

\bibitem{Coxeter35}
\bysame, \emph{The complete enumeration of finite groups of the form
  {$R_i^2=(R_iR_j)^{k_{ij}}=1$}}, J. London Math. Soc. \textbf{10} (1935),
  no.~1, 21--25.

\bibitem{DPWX22}
Ivan Dimitrov, Charles Paquette, David Wehlau, and Tianyuan Xu,
  \emph{Subregular {$J$}-rings of {C}oxeter systems via quiver path algebras},
  J. Algebra \textbf{612} (2022), 526--576.

\bibitem{Donnelly11}
Robert~G. Donnelly, \emph{Root systems for asymmetric geometric representations
  of {C}oxeter groups}, Comm. Algebra \textbf{39} (2011), no.~4, 1298--1314.

\bibitem{Hee91}
Jean-Yves H\'{e}e, \emph{Syst\`eme de racines sur un anneau commutatif
  totalement ordonn\'{e}}, Geom. Dedicata \textbf{37} (1991), no.~1, 65--102.

\bibitem{Hu23}
Hongsheng Hu, \emph{Representations of {C}oxeter groups of {L}usztig's
  $\boldsymbol{a}$-function value 1}, preprint, arXiv:2309.00593, 2023.

\bibitem{Humphreys90}
James~E. Humphreys, \emph{{Reflection Groups and Coxeter Groups}}, Cambridge
  Studies in Advanced Mathematics, vol.~29, Cambridge University Press,
  Cambridge, 1990.

\bibitem{KL79}
David Kazhdan and George Lusztig, \emph{Representations of {C}oxeter groups and
  {H}ecke algebras}, Invent. Math. \textbf{53} (1979), no.~2, 165--184.

\bibitem{Krammer09}
Daan Krammer, \emph{The conjugacy problem for {C}oxeter groups}, Groups Geom.
  Dyn. \textbf{3} (2009), no.~1, 71--171.

\bibitem{Lusztig83-intrep}
George Lusztig, \emph{Some examples of square integrable representations of
  semisimple {$p$}-adic groups}, Trans. Amer. Math. Soc. \textbf{277} (1983),
  no.~2, 623--653.

\bibitem{Munkres84}
James~R. Munkres, \emph{Elements of {A}lgebraic {T}opology}, Addison-Wesley
  Publishing Company, Menlo Park, CA, 1984.

\bibitem{Serre77}
Jean-Pierre Serre, \emph{{Linear Representations of Finite Groups}}, Graduate
  Texts in Mathematics, vol.~42, Springer-Verlag, New York-Heidelberg, 1977,
  translated from the second French edition by Leonard L. Scott.

\bibitem{Sunada13}
Toshikazu Sunada, \emph{{Topological Crystallography. With a View Towards
  Discrete Geometric Analysis}}, Surveys and Tutorials in the Applied
  Mathematical Sciences, vol.~6, Springer, Tokyo, 2013.

\bibitem{Vinberg71}
\`Ernest~Borisovich Vinberg, \emph{Discrete linear groups generated by
  reflections}, Math. USSR, Izv. \textbf{5} (1971), 1083--1119.

\end{thebibliography}

\end{document}